\theoremstyle{plain}
\newtheorem{thm}{Theorem}[section]
\newtheorem{cor}[thm]{Corollary}
\newtheorem{lem}[thm]{Lemma}
\newtheorem{prop}[thm]{Proposition}
\theoremstyle{definition}
\newtheorem{defn}[thm]{Definition}
\theoremstyle{remark}
\numberwithin{equation}{section}
\newcommand{\average}{{\mathchoice {\kern1ex\vcenter{\hrule height.4pt
            width 6pt depth0pt} \kern-9.7pt} {\kern1ex\vcenter{\hrule
            height.4pt width 4.3pt depth0pt} \kern-7pt} {} {} }}
\newcommand{\R}{\mathbb R}
\newcommand{\N}{\mathbb N}
\renewcommand{\L}{\mathcal L}
\newcommand{\p}{\partial}
\newcommand{\comment}[1]{}
\begin{document}

\title[Extinction rates for nonradial solutions to the Stefan problem]{Extinction rates for nonradial solutions\\ to the Stefan problem}

\author[G. Fioravanti]{Gabriele Fioravanti}
\address{Universita degli Studi di Torino\newline
Dipartimento di Matematica “Giuseppe Peano”\newline
Via Carlo Alberto 10, 10124, Torino, Italy.}
\email{\tt gabriele.fioravanti@unito.it}

\author[X. Ros-Oton]{Xavier Ros-Oton}
\address{ICREA\newline
Pg.\ Llu\'is Companys 23, 08010 Barcelona, Spain\newline\indent
Universitat de Barcelona\newline
Departament de Matem\`atiques i Inform\`atica\newline
Gran Via de les Corts Catalanes 585, 08007 Barcelona, Spain \newline \indent
Centre de Recerca Matem\`atica \newline Barcelona, Spain.}
\email{\tt xros@icrea.cat}

\author[C. Torres-Latorre]{Clara Torres-Latorre}
\address{Instituto de Ciencias Matemáticas\newline Consejo Superior de Investigaciones Científicas\newline
C/ Nicolás Cabrera, 13-15, 28049 Madrid, Spain.}
\email{\tt clara.torres@icmat.es}

\begin{abstract}
We consider the one-phase Stefan problem describing the evolution of melting ice.
On the one hand, we focus on understanding the evolution of the free boundary near isolated singular points, and we establish for the first time upper and (more surprisingly) lower estimates for its evolution.
In 2D, these bounds almost match the best known ones for radial solutions, but hold for all solutions to the Stefan problem, with no extra assumption on the initial or boundary data.

On the other hand, as a consequence of our results, we also characterize the global regularity of the free boundary, as follows: it can be written as a graph $\{t=\Gamma(x)\}$, where $\Gamma$ is $C^1$ (and not $C^2$) near any singular points in the lower strata $\Sigma_m$, $m\leq n-2$.
Moreover, $\Gamma$ is not $C^1$ at singular points in $\Sigma_{n-1}$.
\end{abstract}

% \thanks{X.R.-O. was supported by the European Union under the ERC Consolidator Grant No 101123223 (SSNSD), the AEI project PID2021-125021NAI00 funded by MICIU/AEI/10.13039/501100011033 and by FEDER (Spain), the AEI-DFG project PCI2024-155066-2, the AGAUR Grant 2021 SGR 00087 (Catalunya), the AEI Grant RED2022-134784-T funded by MCIN/AEI/10.13039/501100011033 (Spain), and the AEI Maria de Maeztu Program for Centers and Units of Excellence in R\&D CEX2020001084-M.}

\subjclass{35R35, 35B65, 80A22,  35K55}

\keywords{Stefan problem, free boundary, boundary regularity, singular set.}

\maketitle

\section{Introduction}

The Stefan problem is probably the most classical and well-known free boundary problem \cite{LC31,Ste91}. 
It describes phase transitions, such as ice melting into water, and has been widely studied in the last 50 years \cite{ACS96,Caf77,CF78,CK10,Fri68,HS15,HR19,Kim03,KN78,Koc98,PSS07,Wei99}.

After the transformation $u(x,t)=\int_0^t \theta$, where $\theta\geq0$ is the temperature function, the one-phase Stefan problem becomes
\begin{equation}\label{eq:obstacleProblem}
    u_t - \Delta u = -\chi_{\{u>0\}},\quad u\geq 0,\quad u_t\geq 0,
\end{equation}
see \cite{Duv73,Fig18} for more details.
The moving interphase that separates the solid and liquid regions, $\partial\{u>0\}$, is often called free boundary.

The best known general results for the structure and regularity of such interphase may be summarized as follows:
\begin{itemize}
\item The free boundary splits into \textit{regular} points and \textit{singular} points.

\item The free boundary is $C^\infty$ near any regular point \cite{Caf77,KN77,KN78}.

\item The set of singular points $\Sigma$ is closed, and it has parabolic Hausdorff dimension at most $n-1$ \cite{FRS24}.
\end{itemize}

The proof of most of these results is based on blow-ups, i.e., considering limits 
\[\lim_{r\downarrow0} \frac{u(x_\circ+r x,t_\circ+r^2 t)}{r^2} \]
at any free boundary point $(x_\circ,t_\circ)$.
It turns out that the blow-up at \textit{any} singular point is a (unique) non-negative, 2-homogeneous, quadratic polynomial $p_{x_\circ,t_\circ}(x)$, and the singular set $\Sigma$ can be partitioned into the sets 
\[\Sigma_m:=\big\{(x_\circ,t_\circ)\in \Sigma : {\rm dim}(\{p_{x_\circ,t_\circ}=0\})=m\big\},\qquad m\in\{0,1,...,n-1\}.\]

In case of the set $\Sigma_{n-1}$, a very fine description of the solution and the free boundary near these points was recently established in \cite{FRS24}.
However, much less is known about $\Sigma_m$ for $m\leq n-2$, other than these sets have dimension at most $m$.

The only situation that has been quite well understood is the case of \textit{radial} solutions.
In such situation, the existence of a radial solution $u$, with an isolated singular point at $(0,0)$, was proved in \cite{HV97,AHV01,HR19}. For this solution, the ice region $\{u=0\}$ is a melting ball $\{|x|\leq\lambda(-t)\}$, for $t>0$, satisfying
\begin{eqnarray*}
\lambda(t) &\asymp &\sqrt{t}\, e^{-\sqrt{|\log t|/2}} \qquad \textrm{if}\quad n=2 \\
\lambda(t) &\asymp &\sqrt{t}\, |\log t|^{-\frac{1}{n-2}} \quad\ \textrm{if}\quad n\geq3.
\end{eqnarray*}

Our goal in this paper is to establish for the first time melting rates for \emph{all} solutions of \eqref{eq:obstacleProblem}.
Notice that, even at isolated free boundary points in $\Sigma_0$, solutions are not expected to be asymptotitcally radial, since the blow-up $p_{x_\circ,t_\circ}$ is in general not radial.
Thus, it is not clear a priori whether all solutions behave like those in the radial case or not.

\subsection{The 2D case}

Our best result is in dimension $n=2$, where we establish that at all singular points in $\Sigma_0$ the melting rates behave very much like the radial case.

Recall that in 2D there are only two types of singular points: those in $\Sigma_1$ (studied in \cite{FRS24}), and those in $\Sigma_0$ (covered by the following result).

\begin{thm}\label{main-thm1}
Let $u(x,t)$ be any solution of \eqref{eq:obstacleProblem} in $Q_1\subset\R^{2+1}$, and assume that $(0,0)$ is a singular free boundary point in $\Sigma_0$.

Then, for any $\delta>0$, in a neighborhood of the origin we have
\[ \left\{t< -C_1|x|^2 \exp\big(\big|\log |x|\big|^{\frac12+\delta}\big)\right\} \subset \{u=0\} \subset 
\left\{t< -c_1|x|^2\exp\big(\big|\log |x|\big|^{\frac12-\delta}\big)\right\} \]
for some positive constants $c_1,C_1$.

In particular, for any $\delta>0$ the free boundary near the origin satisfies
\[\partial\{u(\cdot,-t)>0\} \subset \left\{ c_1\sqrt{t}\,e^{-|\log t|^{\frac12+\delta}}<|x|<C_1\sqrt{t}\,e^{-|\log t|^{\frac12-\delta}} \right\}\]
for some positive constants $c_1,C_1$.
\end{thm}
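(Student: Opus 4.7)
The plan is to quantify, at the sharp 2D rate, the convergence of the parabolic rescalings of $u$ to its blow-up at $(0,0)$, and then turn this rate into the two-sided sandwich for the contact set. Since $(0,0)\in\Sigma_0$, in $\R^2$ the blow-up polynomial has the diagonal form $p(x)=\tfrac{1}{2}(ax_1^2+bx_2^2)$ with $a,b>0$ and $a+b=1$ (using $\Delta p=1$ inside the positivity set). Define
\[ w_r(x,t):=\frac{u(rx,r^2t)}{r^2}-p(x),\qquad \phi(r):=\|w_r\|_{L^2(\partial_p Q_1)}^2.\]
Uniqueness of the blow-up at isolated singular points gives $\phi(r)\to 0$ as $r\downarrow 0$, and the whole argument reduces to establishing the sharp 2D decay $\phi(r)\lesssim \exp\bigl(-c|\log r|^{1/2-\delta}\bigr)$.

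First I would combine a parabolic Monneau-type monotonicity formula with a quantitative epiperimetric inequality at the most singular profile $p$, producing a differential inequality of the form
\[ -r\,\phi'(r)\ \ge\ \frac{c\,\phi(r)}{1+\bigl|\log\phi(r)\bigr|}.\]
The key is that, in 2D, the dissipation on the space of perturbations of $p$ degenerates only logarithmically, encoding the classical fact that the Newtonian capacity of a small planar set scales like $1/|\log r|$; this is precisely what drives the radial ODE $\lambda'\asymp -\lambda/|\log\lambda|$ of Herrero--Vel\'azquez. Integrating this inequality from $r$ to a fixed scale, and then passing from $L^2$ to $L^\infty$ via standard parabolic regularity at the cost of an arbitrary $\delta>0$, gives
\[ \|w_r\|_{L^\infty(Q_{1/2})}\ \leq\ \exp\bigl(-|\log r|^{1/2-\delta}\bigr).\]

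From this quantitative closeness I would extract both inclusions. For the outer one, wherever $p(x)\ge 2\|w_r\|_\infty$ one has $u_r>0$; since $p\asymp|x|^2$, every zero of $u_r$ at time $t=-1$ satisfies $|x|^2\le C\|w_r\|_\infty$, and unscaling to $t=-r^2$ yields $|x|<C_1\sqrt{|t|}\,e^{-|\log|t||^{1/2-\delta}}$. For the inner inclusion, the origin belongs to $\{u=0\}$ for all $t\le 0$ (because $u_t\ge0$ and $(0,0)$ is singular); I would then combine the closeness $\|w_r\|_\infty\le\exp(-|\log r|^{1/2-\delta})$ with a barrier/comparison argument to propagate this vanishing to an ellipse of the correct size, using that a strictly smaller contact set would push $u_r$ into the regular-point regime of Caffarelli and contradict $(0,0)\in\Sigma_0$.

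The hard step will be the quantitative 2D dissipation estimate that yields the $1/|\log\phi|$ factor. The qualitative Monneau monotonicity and the convergence $\phi\to 0$ are classical, but the sharp 2D rate requires a genuinely non-radial analogue of the capacitary argument behind the Herrero--Vel\'azquez ODE. Ensuring that the ellipsoidal eccentricity $a\ne b$ of $p$ does not destroy the $\sqrt{|\log r|}$ exponent, and that the passage from $L^2$ to $L^\infty$ costs only an arbitrary $\delta$, are the delicate technical points; this slack is exactly what prevents us from recovering the exact constant $1/\sqrt{2}$ of the radial case.
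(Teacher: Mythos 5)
Your proposal correctly identifies the two ingredients needed (a sharp quantitative convergence rate to the blow-up, and the translation of this rate into the two inclusions), and the outer inclusion step from the $L^\infty$ closeness is essentially what the paper does in Proposition~\ref{prop:contact_set_ub}. However, there are two genuine gaps.

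First, the key analytic input is not obtained in the paper via a Monneau-type monotonicity plus epiperimetric inequality. Such a quantitative epiperimetric inequality at a 2D $\Sigma_0$ singular point, yielding a differential inequality with the precise $1/|\log\phi|$ dissipation factor, is not known, and your proposal leaves it entirely unproved. The paper instead takes the decay $\|u-p_2\|_{L^\infty(Q_r)}\lesssim r^2\,2^{-|\log r|^\gamma}$ from a compactness/contradiction argument (Theorem~\ref{thm:expansion}) whose input is the Caffarelli--Friedman modulus of continuity $u_t\le C\,2^{-|\log r|^\gamma}$ of \eqref{eq:continuity:u_t}. This is a real difference: the paper's route is entirely elementary once one has \eqref{eq:continuity:u_t}, while yours invokes a tool whose existence is unclear.

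Second, and more seriously, your argument for the inner inclusion does not work. Knowing that $\|u_r-p\|_{L^\infty}$ is small gives an \emph{upper} bound on $u$ near $\{p=0\}$, but since $p>0$ away from the origin, this cannot by itself force $u=0$ anywhere except on the time axis; the contact set could in principle be exactly $\{x=0,\,t\le0\}$ while the origin remains in $\Sigma_0$ with the same blow-up, so your contradiction with the ``regular-point regime'' does not materialize. Indeed the paper explicitly observes that the two inclusions are ``completely different and independent from each other.'' The paper's lower bound argument goes through a genuinely distinct chain: it proves a lower bound on $u_t$ by comparing with self-similar caloric functions $\varphi_\eta$ in parabolic cones $D_{\eta,m}$ (Proposition~\ref{prop:cone_solns}), using a De~Silva--Savin type ``almost positivity'' lemma (Proposition~\ref{cor:almost_positivity}) to propagate positivity, iterated on the doubly-exponential scales $r_k=2^{1-2^k}r_0$ (Lemma~\ref{lem:growth_dyadic}); it then bounds $|\nabla u/u_t|$ (Proposition~\ref{prop:Clara-Teo}) and integrates along rays to extract the inner inclusion (Corollary~\ref{cor:FB_almost_C2}). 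Your intuition that the $1/|\log\eta|$ scaling of 2D capacity produces the $|\log t|^{1/2}$ exponent is correct and exactly mirrors the estimate $\varepsilon\le C_*/|\log\eta|$ in Proposition~\ref{prop:cone_solns}, but it must be wired into a nondegeneracy argument for $u_t$, not into a closeness estimate for $u$.
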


The proofs of these upper and lower bounds for $\{u=0\}$ are completely different and independent from each other. 
Indeed, while the upper bound uses strongly the recent results in \cite{FRS24} and was more or less expected, the lower bound requires completely new ideas and does not use at all the techniques from \cite{FRS24}.
It is based on constructing explicit barriers and Harnack-type inequalities together with a delicate iterative method to show fine lower bounds for $u_t$ in parabolic cylinders $Q_{r_k}$ of size $r_k\asymp 2^{2^{-k}}$.

In both cases, an important starting point for these proofs is the regularity in time of solutions to \eqref{eq:obstacleProblem}, established by Caffarelli and Friedman in \cite{CF78,CF79}.

More precisely, they proved that (in dimension $n=2$) if $(0,0)$ is a singular point then $u_t\leq Ce^{-|\log r|^{\frac12-\delta}}$ in $Q_r$, for any $\delta>0$.
Here, $Q_r:=B_r\times(-r^2,0)$ denotes a parabolic cylinder.
This regularity is almost-optimal, in view of the radial example described above. 

If we \emph{assume} that a solution $u$ has the same regularity in time as the radial examples, then our proof yields that any such solution behaves exactly as the radial one.

\begin{prop}\label{prop-thm1}
Let $u$ be as in Theorem \ref{main-thm1}, and assume in addition that 
\begin{equation}\label{open-pb}
u_t \leq C e^{-C|\log r|^{\frac12}} \quad \textrm{in}\quad Q_r
\end{equation}
for all $r\in(0,1)$. 
Then, for any time $-t\in(-\frac12,0)$ we have
\[\partial\{u(\cdot,-t)>0\} \subset \left\{ c_1\sqrt{t}\,e^{-C_1|\log t|^{\frac12}}<|x|<C_1\sqrt{t}\,e^{-c_1|\log t|^{\frac12}} \right\}\]
for some positive constants $c_1,C_1$.
\end{prop}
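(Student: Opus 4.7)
The plan is to rerun both halves of the argument for Theorem \ref{main-thm1} under the stronger hypothesis \eqref{open-pb}, tracking how the improved exponent propagates. Every invocation of the Caffarelli--Friedman regularity $u_t \lesssim e^{-|\log r|^{1/2-\delta}}$ in the proof of Theorem \ref{main-thm1} should be replaced by \eqref{open-pb}, at which point the $\delta$-loss collapses into a loss in multiplicative constants only.

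For the outer inclusion (equivalently, $|x| \leq C_1\sqrt{t}\,e^{-c_1|\log t|^{1/2}}$ on the free boundary), a short direct argument works. Because $(0,0)\in\Sigma_0$, the $2$-homogeneous blow-up $p$ at the origin is a positive-definite quadratic form in $\R^2$, so $p(x) \geq c|x|^2$; combined with the $C^0$-convergence of the parabolic rescalings of $u$ to $p$, this gives $u(x,0) \geq c|x|^2$ for $|x|$ small. Now fix a candidate ice point $(x,-t)\in\{u=0\}$ and pick $r\asymp \sqrt{t}$ so that $(x,s)\in Q_r$ for every $s\in(-t,0)$. Integrating $u_s\geq 0$ from $-t$ to $0$ and applying \eqref{open-pb},
\[ c|x|^2 \leq u(x,0) = \int_{-t}^{0} u_s(x,s)\,ds \leq Ct\,e^{-C|\log r|^{1/2}} \leq C't\,e^{-c|\log t|^{1/2}}. \]
Rearranging gives $|x| \leq C_1\sqrt{t}\,e^{-c_1|\log t|^{1/2}}$.

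For the inner inclusion, my plan is to rerun the iterative barrier-and-Harnack construction from the proof of Theorem \ref{main-thm1}. That iteration produces fine lower bounds on $u_t$ on a nested sequence of parabolic cylinders $Q_{r_k}$ with $|\log r_k|$ growing doubly-exponentially in $k$. At each step, an explicit barrier together with a Harnack-type inequality transfers a quantitative lower bound on $u_t$ from scale $r_k$ to scale $r_{k+1}$, and the $u_t$-upper bound at scale $r_k$ enters as input. Under Caffarelli--Friedman with exponent $1/2-\delta$, the mismatch between the input and the target exponents accumulates into the extra $+\delta$ in Theorem \ref{main-thm1}; under \eqref{open-pb} the input and target already match at $1/2$, so the iteration closes on the sharp exponent $1/2$ in the conclusion, with only the multiplicative constants $C_1$ getting worse.

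The main obstacle is the inner bound. The outer one is a single-line integration once the non-degeneracy at $\Sigma_0$ and \eqref{open-pb} are in hand, but the inner one requires careful bookkeeping: one must verify that the $u_t$-upper bound enters every step of the iteration through its exponent only, with no hidden $\delta$-slack from covering arguments or from transferring barriers across scales, and that the per-step multiplicative losses remain summable over the doubly-exponential sequence of scales. This is what turns the ``in principle immediate'' upgrade from Theorem \ref{main-thm1} into a genuine argument.
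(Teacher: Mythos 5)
Your plan is correct, and for the inner (lower) bound on $|x|$ it coincides with the paper's: rerun Theorem~\ref{thm:expansion} with $\tilde\sigma(r) = Ce^{-C|\log r|^{1/2}}$ in place of $\omega_n$, feed the improved $\tilde\sigma$ into the opening angles $\eta_k = 2C\sqrt{\tilde\sigma(2r_k)}$ of Lemma~\ref{lem:growth_dyadic}, check that $\varepsilon_k\lesssim |\log\eta_k|^{-1}\lesssim 2^{-k/2}$ so that $\sum_{j<k}2^j\varepsilon_j\lesssim 2^{k/2}\asymp|\log r|^{1/2}$, hence $\tilde\tau(r)=e^{-C|\log r|^{1/2}}$, and apply Corollary~\ref{cor:FB_almost_C2}. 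This is exactly what the paper does.

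For the outer (upper) bound on $|x|$, however, you take a genuinely different and more elementary route. The paper redoes the compactness argument of Theorem~\ref{thm:expansion} under \eqref{open-pb} and then invokes Proposition~\ref{prop:contact_set_ub}; you instead use only the qualitative nondegeneracy $u(\cdot,0)\geq c|\cdot|^2$ near $0$ (locally uniform blow-up convergence plus positive definiteness of $p_2$ at a $\Sigma_0$ point), the monotonicity $u_t\geq 0$, and \eqref{open-pb} integrated in $s\in(-t,0)$. This bypasses the compactness step entirely for the outer bound. It is worth noting that the same integration, run at a general zero $(x,s)\in Q_r$ of $u$ rather than at $s=-t$ only, yields $\{u>0\}\cap Q_r\supset D_{\eta,0}$ with $\eta\asymp e^{-c|\log r|^{1/2}}\asymp\sqrt{\tilde\sigma(r)}$, which is precisely the nondegeneracy input $\eta_k$ needed for Lemma~\ref{lem:growth_dyadic}; so in the $\Sigma_0$ case your direct argument can in fact substitute for both the refined Theorem~\ref{thm:expansion} and Proposition~\ref{prop:contact_set_ub}, streamlining the whole proof.

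Two small points to tidy up. First, choosing $r\asymp\sqrt{t}$ so that $\{x\}\times(-t,0)\subset Q_r$ also requires $|x|<r$; you should first rule out $|x|\gtrsim\sqrt{t}$, which follows from the same inequality applied with $r\asymp|x|$ (it gives $c\leq Ce^{-C|\log|x||^{1/2}}$, impossible for small $|x|$). Second, your phrasing ``the $u_t$-upper bound at scale $r_k$ enters as input'' to the iteration is a slight mischaracterization: in Lemma~\ref{lem:growth_dyadic} the upper bound on $u_t$ never appears directly, only the contact-set opening angle $\eta_k$ does, and that is controlled by the expansion rate $\sigma$ (or by your integration argument). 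The conclusion you draw is nonetheless correct, since it is precisely $\eta_k$ that improves under \eqref{open-pb}.
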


It remains an open problem to decide whether \eqref{open-pb} holds for all solutions of \eqref{eq:obstacleProblem} in $\R^2$ or not.

\subsection{Higher dimensions}

The techniques we develop in this paper work not only in dimension 2 but also in arbitrary dimensions $n\geq3$.
In that case, the regularity in time of Caffarelli-Friedman \cite{CF78,CF79} yields that if $(0,0)$ is a singular point then $u_t\leq C|\log r|^{-\frac{2}{n-2}+\delta}$ in $Q_r$, for any $\delta>0$.
Using this, we prove the following:

\begin{thm}\label{main-thm2}
Let $u(x,t)$ be any solution of \eqref{eq:obstacleProblem} in dimension $n\geq3$, and assume that $(0,0)$ is a singular free boundary point in $\Sigma_0$.

Then, for any $\delta>0$ the free boundary at any time $-t\in(-\frac12,0)$ satisfies
\[\partial\{u(\cdot,-t)>0\} \subset \left\{ c_1\sqrt{t}\,e^{-|\log t|^{\delta}}<|x|<C_1\sqrt{t}\,|\log t|^{-\frac{1}{n-2}+\delta} \right\}\]
for some positive constants $c_1,C_1$.
\end{thm}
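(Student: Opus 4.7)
The proof separates into two independent one-sided inclusions: an upper estimate placing the free boundary inside $\{|x|\lesssim\sqrt{t}\,|\log t|^{-\frac{1}{n-2}+\delta}\}$, and a lower estimate forcing the ice region to contain $\{|x|\lesssim c_1\sqrt{t}\,e^{-|\log t|^\delta}\}$. The upper bound follows essentially directly from the Caffarelli--Friedman time regularity combined with the blow-up structure at a point of $\Sigma_0$, while the lower bound is the core novelty and adapts the iterative barrier/Harnack scheme behind Theorem~\ref{main-thm1} to the higher-dimensional setting.

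\emph{Upper bound.} Since $(0,0)\in\Sigma_0$ the blow-up $p_{0,0}$ is a positive definite quadratic form, and therefore $p_{0,0}(x)\geq c|x|^2$; by uniform convergence of the parabolic rescalings to the blow-up one obtains $u(x,0)\geq \tfrac{c}{2}|x|^2$ for $|x|$ small. The Caffarelli--Friedman estimate \cite{CF78,CF79} provides $u_t\leq C|\log r|^{-\frac{2}{n-2}+\delta}$ in $Q_r$ at any singular point in dimension $n\geq 3$. Taking $r\asymp\sqrt{t}$ and integrating backwards in time yields
\[
u(x,-t)\geq \tfrac{c}{2}|x|^2-Ct\,|\log t|^{-\frac{2}{n-2}+\delta},
\]
which is strictly positive whenever $|x|\geq C'\sqrt{t}\,|\log t|^{-\frac{1}{n-2}+\delta/2}$. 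Relabeling $\delta$ gives the claimed upper bound on the free boundary radius.

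\emph{Lower bound.} The plan is to mirror the proof of Theorem~\ref{main-thm1} recalibrated for $n\geq 3$. Argue by contradiction: assume that at some small time $-t_0$ the free boundary reaches a point inside the ball $B_\rho$ with $\rho\leq c_1\sqrt{t_0}\,e^{-|\log t_0|^\delta}$ and $c_1$ small. The proof rests on three ingredients: (i) a parabolic Harnack-type inequality for $u_t$, which is caloric and non-negative in $\{u>0\}$, used to transfer pointwise lower bounds across nested parabolic cylinders; (ii) explicit sub- and super-solutions built from the Newtonian kernel $|x|^{2-n}$, which quantify the amount of positive temperature generated when the free boundary sits close to a singular point; and (iii) an iterative scheme over a sequence of parabolic cylinders $Q_{r_k}$ with $r_k$ decreasing at a carefully tuned rate analogous to the two-dimensional scheme, but with the $|x|^{2-n}$ barrier replacing the logarithm. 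At each step, (ii) produces a lower bound for $u_t$ at the scale $r_{k+1}$ from the data at scale $r_k$, and (i) spreads it through the whole cylinder. Iterating a sufficient (but finite) number of steps forces a uniform positive lower bound for $u_t$ at arbitrarily small parabolic scales around $(0,0)$, contradicting the Caffarelli--Friedman upper bound.

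\emph{Main obstacle.} The hardest step is the careful calibration of the iterative scheme (iii) and the quantitative bookkeeping between (i) and (ii), so that the accumulated loss across the iteration matches the $|\log t|^\delta$ correction in the statement without exceeding it. The kernel $|x|^{2-n}$ for $n\geq 3$ has a different scaling than the two-dimensional logarithm and changes the geometry of the barriers; the sequence $r_k$ must be chosen so that the compounded decay of the barrier estimates balances precisely against the Caffarelli--Friedman ceiling $|\log r|^{-\frac{2}{n-2}+\delta}$. This balancing is where the bulk of the technical work lies and where the arbitrary exponent $\delta$ in the statement arises.
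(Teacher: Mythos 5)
Your proposed \emph{upper bound} is a valid alternative to the paper's route. The paper instead invokes the expansion estimate (Theorem \ref{thm:expansion}) and deduces $\{u>0\}\cap Q_r\supset D_{C\sqrt{\sigma(r)},0}$ in Proposition~\ref{prop:contact_set_ub}, where $\sigma(r)=|\log r|^{-\varepsilon}$. Your argument -- $u(x,0)\geq\tfrac{c}{2}|x|^2$ from blow-up convergence, then integrating the Caffarelli--Friedman bound $u_t\leq C|\log r|^{-\frac{2}{n-2}+\delta}$ backwards in time -- is more elementary and sidesteps the compactness machinery behind Theorem~\ref{thm:expansion}. It only produces a one-sided estimate on $u-p_2$, but that is all the upper bound needs. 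This part is fine.

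The \emph{lower bound}, however, is described too loosely and contains genuine gaps. First, the barriers are not Newtonian kernels $|x|^{2-n}$: the paper constructs \emph{parabolically self-similar} positive caloric functions $\varphi_\eta$ in the complement of parabolic cones $D_{\eta,0}=\{|x|>\eta|t|^{1/2}\}$, with $\varphi_\eta(\lambda x,\lambda^2t)=\lambda^{2\varepsilon}\varphi_\eta(x,t)$ and a small eigenvalue $\varepsilon=\varepsilon(\eta)$ estimated via a Rayleigh-quotient competitor and the Gaussian log-Sobolev inequality (Proposition~\ref{prop:cone_solns}). The elliptic kernel $|x|^{2-n}$ has the wrong parabolic scaling and cannot play this role. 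Second, your argument is framed as a contradiction -- ``iterating forces a uniform positive lower bound for $u_t$ at arbitrarily small scales, contradicting Caffarelli--Friedman'' -- but this is incoherent: the iteration (Lemmas~\ref{lem:growth_dyadic} and~\ref{lem:growth_r}) produces a scale-dependent lower bound $u_t\gtrsim\tau(r)$ with $\tau(r)\ll\omega_n(r)$, which is \emph{consistent} with the Caffarelli--Friedman upper bound, not contradictory. The paper's argument is direct. Third, and most importantly, you omit the conversion from a lower bound on $u_t$ to a lower bound on the ice region: this is Proposition~\ref{prop:Clara-Teo} (the estimate $\|\nabla u/u_t\|_{L^\infty(Q_r)}\leq C_1 r/\tau(2r)$, proved via a comparison function that uses both the nondegeneracy of $u_t$ from Lemma~\ref{lem:growth_r} and the containment from Proposition~\ref{prop:contact_set_ub}) together with Corollary~\ref{cor:FB_almost_C2} (integrating $\partial_\lambda u(\lambda x,\lambda t)$ along a ray to conclude $\{u=0\}\supset\{|x|\leq c\sqrt{|t|}\,\tau(|t|)\}$). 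Finally, the iteration itself hinges on the \emph{almost positivity property} in $D_{\eta,m}$ (Proposition~\ref{cor:almost_positivity}), a De Silva--Savin-type lemma which lets one transfer the $u_t\geq c_k\varphi_k$ estimate from scale $r_k$ to scale $r_{k+1}$ tolerating a small negative part of $u_t$ near $\partial D_{\eta_{k+1},m}$; your sketch replaces this with an unspecified ``parabolic Harnack-type inequality for $u_t$'' and therefore does not explain how to cross the nontrivial geometry near the degenerate cone boundary.
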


As before, if we assume in addition that
\begin{equation}\label{open-pb2}
u_t \leq C |\log r|^{-\frac{2}{n-2}} \quad \textrm{in}\quad Q_r
\end{equation}
for all $r\in(0,1)$, then for any time $-t\in(-\frac12,0)$ we have
\[\partial\{u(\cdot,-t)>0\} \subset \left\{ c_1\sqrt{t}\,|\log t|^{-C_1}<|x|<C_1\sqrt{t}\,|\log t|^{-\frac{1}{n-2}} \right\}\]
for some positive constants $c_1,C_1$.

It remains an open problem to decide whether \eqref{open-pb2} holds for all solutions of \eqref{eq:obstacleProblem} or not.

\subsection{Global regularity of the free boundary}

Concerning the intermediate strata $\Sigma_m$ with $1\leq m\leq n-2$, the possible behaviors of different solutions are expected to be very diverse, and thus it does not seem possible to establish matching upper and lower estimates like those in Theorem~\ref{main-thm1}.
In such case, we prove (non-optimal) extinction rates in Proposition \ref{prop:contact_set_ub} and Corollary \ref{cor:FB_almost_C2}.
These new rates allow us to establish for the first time the following global $C^1$ regularity of the free boundary:

\vspace{-0.06cm}
\begin{thm}\label{main-thm3}
Let $u(x,t)$ be any solution of \eqref{eq:obstacleProblem}, and assume that $(0,0)\in \Sigma_m$ is a singular free boundary point, $m\leq n-2$.

Then, if $U$ is a neighbourhood of $(0,0)$ such that $U\cap\Sigma_{n-1} = \emptyset$, the free boundary $\partial\{u>0\}\subset \R^n\times \R$ can be written as a $C^1$ graph $\{t=\Gamma(x)\}$ in $U$.

In particular, the whole free boundary is an $n$-dimensional $C^1$ manifold away from the set $\Sigma_{n-1}$.
\end{thm}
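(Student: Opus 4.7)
The plan is to express the free boundary in \(U\) as the graph of the \emph{melting time} function \(\Gamma\) and to read off the claimed \(C^1\) regularity from the extinction-rate results proved earlier in the paper. For \(x\) in the spatial projection \(V\) of \(U\), set
\[ \Gamma(x) := \sup\{t \in \R : (x,t)\in U,\ u(x,t)=0\}, \]
with \(\Gamma(x)=-\infty\) if the set is empty. Monotonicity \(u_t\geq 0\) makes each vertical slice \(\{t : u(x,t)=0\}\) a lower set in \(t\), so by continuity \(u(x,\Gamma(x))=0\) and \(u(x,t)>0\) for \(t>\Gamma(x)\). The first task is to verify that \(\partial\{u>0\}\cap U\) equals the graph of \(\Gamma\); the nontrivial inclusion is that no FB point lies strictly below this graph. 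At regular FB points this follows from the classical \(C^\infty\) regularity and transversality to the \(t\)-axis; at singular points \((x_\circ,t_\circ)\in\Sigma_m\cap U\) with \(m\leq n-2\), Proposition \ref{prop:contact_set_ub} forces the contact set to collapse onto the \(m\)-dimensional singular fibre, which has codimension at least two and therefore prevents a vertical component of the FB in the time direction.

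Next, I would establish the \(C^1\) regularity of \(\Gamma\) stratum by stratum. At any regular FB point in \(U\), the classical theory of Caffarelli and Kinderlehrer--Nirenberg \cite{Caf77,KN77,KN78} yields that the FB is \(C^\infty\) and transverse to the time direction, and the implicit function theorem applied to \(u_t\) produces \(\Gamma\in C^\infty\) in a neighbourhood. At a singular point \((x_\circ,t_\circ)\in\Sigma_m\cap U\) with \(m\leq n-2\), the monotonicity in \(t\) gives \(\Gamma(x)\leq t_\circ\) for \(x\) near \(x_\circ\), while the upper extinction bound from Proposition \ref{prop:contact_set_ub} (generalising Theorems \ref{main-thm1}--\ref{main-thm2}) yields
\[ 0\leq t_\circ-\Gamma(x)\leq C\,|x-x_\circ|^2\,\omega(|x-x_\circ|) \]
for some modulus \(\omega\) growing slower than any negative power of the distance. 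Since the right-hand side is \(o(|x-x_\circ|)\), \(\Gamma\) is differentiable at \(x_\circ\) with \(\nabla\Gamma(x_\circ)=0\). Continuity of \(\nabla\Gamma\) at \(x_\circ\) is then obtained from Corollary \ref{cor:FB_almost_C2}, which provides a quantitative ``almost \(C^2\)'' estimate uniform in a neighbourhood of \(x_\circ\), so that \(|\nabla\Gamma(y)|\to 0\) as regular points \(y\) approach the singular strata inside \(U\).

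The main obstacle is precisely this passage from pointwise differentiability at a single singular point to uniform control of \(\nabla\Gamma\) through nearby regular points: the one-point extinction estimate by itself only gives differentiability and vanishing gradient at \(x_\circ\), and the uniformity encoded in Corollary \ref{cor:FB_almost_C2} (or an equivalent blow-up argument exploiting the absence of \(\Sigma_{n-1}\) from \(U\)) is really needed in order to control the tilt of the FB tangent planes at regular points that approach the singular stratum. Once this is in hand, combining the \(C^\infty\) regularity on the open regular subset of \(V\) with the \(C^1\) estimate on each stratum \(\Sigma_m\cap U\) with \(m\leq n-2\) gives \(\Gamma\in C^1(V)\), and the last assertion of the theorem follows, making \(\{t=\Gamma(x)\}\) an \(n\)-dimensional \(C^1\) manifold in \(U\).
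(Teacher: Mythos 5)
The high-level outline — write the free boundary as the graph of a melting-time function $\Gamma$ and argue stratum by stratum — is the right shape, but several of the key steps are incorrect or misattributed, and the one step you flag as ``really needed'' is left open; that step is precisely where the paper's proof diverges from yours.

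\emph{(1) The one-sided bound $\Gamma(x)\leq t_\circ$ is false when $m\geq 1$.} Near a singular point in $\Sigma_m$ with $m\geq 1$, the zero set of $p_2$ is an $m$-dimensional subspace, and the contact set at time $t_\circ$ contains points $x\neq x_\circ$ along that subspace (indeed $\{u(\cdot,t_\circ)=0\}$ concentrates near $\{p_2=0\}$, not just at $x_\circ$). For such $x$ one has $\Gamma(x)\geq t_\circ$, so the claimed inequality fails. Only in the case $m=0$ (isolated contact at time $t_\circ$) does monotonicity combined with Proposition \ref{prop:contact_set_ub} at $t=0$ give $\Gamma(x)<t_\circ$ for $x\neq x_\circ$. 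Your differentiability argument must therefore be two-sided for $m\geq 1$, and this is not addressed.

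\emph{(2) Wrong direction of the cited inequality.} Proposition \ref{prop:contact_set_ub} yields $\{u>0\}\cap Q_r\supset D_{\eta,m}$, which translates into a \emph{lower} bound on $t_\circ-\Gamma(x)$ (once $|\bar y|>\eta|\bar x|$, the slice $\{u(x,\cdot)=0\}$ is pushed below $-|\bar y|^2/\eta^2$). The \emph{upper} bound on $|\Gamma(x)-t_\circ|$ you want — which gives $\Gamma(x)-t_\circ = o(|x-x_\circ|)$ — comes from the opposite inclusion, Corollary \ref{cor:FB_almost_C2}, which places a parabolic region inside the contact set. Inverting the relation $|x|\leq c_1\sqrt{|t|}\,\tau(|t|)$ does produce an estimate of the type $|\Gamma(x)-t_\circ|\lesssim|x-x_\circ|^2/\tau(|x-x_\circ|)^2 = o(|x-x_\circ|)$, but this is not what Proposition \ref{prop:contact_set_ub} gives.

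\emph{(3) The continuity of $\nabla\Gamma$ is the core of the proof and is not supplied.} You correctly identify that pointwise differentiability at a singular point does not control the tilt of the free boundary at nearby regular points, and you point to Corollary \ref{cor:FB_almost_C2} as closing the gap, but that corollary is a one-point inclusion for the contact set and does not bound the gradient of $\Gamma$ away from the base point. The paper's proof instead uses Proposition \ref{prop:Clara-Teo}: the uniform bound
\[
\left\|\frac{\nabla u}{u_t}\right\|_{L^\infty(Q_r(x_0,t_0))}\leq C_1\,\frac{r}{\tau(2r)}\leq C\sqrt{r}
\]
for \emph{all} singular base points $(x_0,t_0)$ and all small $r$ shows that every level set $\{u=\lambda\}$ is a graph $t=\Gamma_\lambda(x)$ with Lipschitz constant $C\sqrt{r}$ throughout $Q_r(x_0,t_0)$, and letting $\lambda\downarrow 0$ gives that the free boundary itself is a Lipschitz graph with that constant in the whole cylinder, not just at the center. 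This simultaneously yields the horizontal tangent plane at the singular point (sending $r\downarrow 0$) and the quantitative tilt bound $|\nu_{(x,t)}-e_{n+1}|\leq C\sqrt{r}$ at nearby regular points, which together with the $C^\infty$ regularity of the regular set gives continuity of the normal, hence $C^1$ regularity of $\Gamma$. That mechanism — a Lipschitz estimate on level sets valid uniformly near each singular point — is what is missing from your argument and cannot be replaced by the extinction rate bounds alone.
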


Notice that the free boundary $\partial\{u>0\}$ was proved to be locally Lipschitz (as a graph $\{t=\Gamma(x)\}$) by Caffarelli in \cite{Caf78}.
Moreover, it is easy to see that (in any dimension $n\geq1$) such a function $\Gamma$ is not $C^1$ at any point in $\Sigma_{n-1}$; see e.g. \cite{FRS24}.

Here, we prove for the first time that $\Gamma$ is actually $C^1$ at all singular points in $\Sigma_m$ with $m\leq n-2$, and therefore the free boundary is $C^1$ everywhere except at $\Sigma_{n-1}$.
Notice also that this is optimal since, in view of the extinction rates we prove in this paper, $\Gamma$ is not $C^2$ at any singular point.

\vspace{-0.14cm}
\subsection{Organization of the paper}
This paper is organized as follows.

We begin in Section \ref{sec:prelim} by introducing our setting and developing some technical tools such as estimates for the heat equation and the parabolic obstacle problem. Then, in Section \ref{sec:expansion} we derive an improved rate of convergence to the blow-up, Theorem \ref{thm:expansion}. 

Section \ref{sec:app} is devoted to proving an \textit{almost positivity property} for the heat equation, and in Section \ref{sec:homo} we construct self-similar solutions tailored to our domains. Finally, in Section \ref{sec:rates} we prove our main results.

\vspace{-0.14cm}
\subsection{Acknowledgements}

G.F. is member of the INDAM (“Istituto Nazionale di Alta Matematica”) research group GNAMPA and is supported by the GNAMPA-INDAM project \emph{PDE ellittiche che degenerano su varieta' di dimensione bassa e frontiere libere molto sottili}, E5324001950001.

X.R.-O. was supported by the EU under the ERC Consolidator Grant No\linebreak 101123223 (SSNSD), the AEI project PID2021-125021NAI00 funded by \linebreak MICIU/AEI/10.13039/501100011033 and by FEDER (Spain), the AEI-DFG project PCI2024-155066-2, the AGAUR Grant 2021 SGR 00087 (Catalunya), the AEI Grant RED2022-134784-T funded by MCIN/AEI/10.13039/501100011033 (Spain), and the AEI Maria de Maeztu Program for Centers and Units of Excellence in R\&D \linebreak CEX2020001084-M.

C.T.-L. has been funded by the European Research Council (ERC) under the Grant Agreement No 862342, from AEI project PID2021-125021NAI00 (Spain), and from the Grant CEX2023-001347-S funded by MICIU/AEI/10.13039/501100011033 (Spain).

\section{Preliminaries}\label{sec:prelim}

\subsection{Setting}
Throughout the paper, the \textit{spatial} dimension will be $n \geq 2$. Given $x \in \R^n$, we will sometimes denote $x = (\bar x,\bar y) \in \R^m\times\R^{n-m}$, with $m$ clear from the context. 

$B_r(x)$ will denote the ball of radius $r$ of $\R^n$, centered at $x$, and when $x$ is the origin we will simply write $B_r$. Moreover, $Q_r(x,t)$ will denote the parabolic cylinder $B_r(x)\times(t-r^2,t)$, and when $(x,t)$ is the origin we will just write $Q_r$.

We will use the notation $\p_p$ to denote the parabolic boundary of a set, that is, for a given set $E \in \R^{n+1}$, we denote by $\p_p E$ the set of points $(x_0,t_0)$ in $\p E$ such that for every $\varepsilon > 0$, $Q_\varepsilon(x_0,t_0)\not\subset E$.

We define the parabolic distance in $\R^{n+1}$ as
\begin{equation} \label{parabolic-d}
\operatorname{d_p}\left((x,t),(y,s)\right) :=  \sqrt{|x-y|^2+|t-s|}.
\end{equation}

\subsection{Estimates for the heat equation}
We will use the interior parabolic Harnack inequality:
\begin{thm}\label{thm:interior_Harnack}
    Let $u$ be a nonnegative solution to
    $$u_t - \Delta u = 0 \quad \text{in} \ Q_{1}.$$
    Then, for all $-1 < t_1 < t_2 \leq 0$,
    $$\sup\limits_{B_{1/2}}u(\cdot,t_1) \leq C\inf\limits_{B_{1/2}}u(\cdot,t_2),$$
    where $C$ depends only on $t_1$, $t_2$, and the dimension.
\end{thm}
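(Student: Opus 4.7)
The statement is the classical forward-in-time parabolic Harnack inequality of Hadamard and Pini, and the plan is essentially to reduce it to its well-known proof via Moser iteration combined with a chaining argument in space-time. By replacing $u$ with $u+\varepsilon$ and letting $\varepsilon\downarrow 0$ at the end, I may assume that $u>0$ in $Q_1$, which is necessary to take logarithms.

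First, I would prove a \emph{local two-point Harnack estimate}: there exist constants $\rho,\sigma,C_0>0$ depending only on $n$ such that whenever $Q_{2r}(x_0,t_0)\subset Q_1$, $x_1\in B_{\rho r}(x_0)$, and $\tau\in(\sigma r^2, 4r^2)$, one has
\[ u(x_0,t_0)\leq C_0\, u(x_1,t_0+\tau). \]
This is proved by Moser's classical argument. By scaling, reduce to $r=1$. For $p>1$, the functions $v=u^p$ and $w=u^{-q}$ (with $q>0$) satisfy differential inequalities that, combined with cut-off functions and Sobolev embedding, yield Caccioppoli-type bounds, which iterate to give $L^\infty$ control from $L^p$ means (upper bound for $u$) and from $L^{-q}$ means (lower bound for $u$). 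To interpolate between the two mean controls, one applies the equation $(\log u)_t-\Delta\log u=|\nabla \log u|^2$: a test-function argument bounds $|\nabla\log u|$ in $L^2_{\rm loc}$, and a John-Nirenberg-type lemma shows that $\log u$ has parabolic BMO character, so its $L^p$ norms on opposing sub-cylinders are comparable. Combining gives the local inequality.

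Second, I would conclude by \emph{chaining}. Pick $x^\star\in\overline{B_{1/2}}$ attaining $\sup_{B_{1/2}} u(\cdot,t_1)$ and $y^\star\in\overline{B_{1/2}}$ attaining $\inf_{B_{1/2}} u(\cdot,t_2)$. Choose an integer $N$ large enough, depending on $n$, $t_1$, $t_2$, and construct a chain of points
\[(x^\star,t_1)=(z_0,s_0),\,(z_1,s_1),\,\dots,\,(z_N,s_N)=(y^\star,t_2)\]
inside $B_{3/4}\times[t_1,t_2]$ with $s_{k+1}-s_k=(t_2-t_1)/N$ and $|z_{k+1}-z_k|\leq \rho\, r_k$, where $r_k$ is comparable to $\operatorname{dist}(z_k,\partial B_1)\wedge \sqrt{(s_{k+1}-s_k)/\sigma}$; this is possible because the diameter of $B_{1/2}$ is bounded and the time increment is fixed. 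Applying the local estimate at each step and iterating yields
\[ \sup_{B_{1/2}} u(\cdot,t_1)\leq C_0^{\,N}\, \inf_{B_{1/2}} u(\cdot,t_2), \]
with $C_0^N$ depending only on $n$, $t_1$, and $t_2$.

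The main technical obstacle is the Moser iteration step (in particular the $\log u$ estimate), but this is entirely classical and textbook-level; in the paper it can simply be quoted rather than reproved. The chaining step, by contrast, is elementary geometry in space-time.
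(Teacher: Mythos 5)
The paper does not prove Theorem~\ref{thm:interior_Harnack} at all: it is recalled as a classical, well-known result (the Hadamard--Pini parabolic Harnack inequality) and used as a black box throughout the paper, so there is no in-paper proof to compare against. Your sketch of the Moser-iteration plus chaining argument is the standard textbook route and is essentially correct as a high-level outline. One small inaccuracy worth fixing: in your ``local two-point Harnack estimate'' you ask that $Q_{2r}(x_0,t_0)\subset Q_1$, but $Q_{2r}(x_0,t_0)=B_{2r}(x_0)\times(t_0-4r^2,t_0)$ lies entirely at times $\le t_0$, whereas your conclusion compares $u(x_0,t_0)$ with $u(x_1,t_0+\tau)$ at the \emph{later} time $t_0+\tau$; the inclusion hypothesis should therefore be $B_{2r}(x_0)\times(t_0-4r^2,t_0+4r^2)\subset Q_1$ (or equivalently $Q_{2r}(x_0,t_0+4r^2)\subset Q_1$), so that the forward-time cylinder is actually inside the domain where $u$ solves the heat equation. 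With that correction the chaining step goes through (the constant degenerates as $t_2-t_1\to 0$ or $t_1\to -1^+$, which is consistent with the stated dependence of $C$ on $t_1,t_2$), and the whole argument is a valid, if terse, reconstruction of the classical proof. Given that the paper treats this as a citable fact, it would be equally acceptable in this context simply to reference a standard source (e.g.\ Moser's 1964 paper or a parabolic PDE textbook) rather than to reprove it.
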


We will also need the following quantitative version of the maximum principle.
\begin{lem}\label{lem:cylinder_measure}
    Let $u$ be a solution to
    $$\left\{\begin{array}{rclll}
        u_t - \Delta u & = & 0 & \text{in} & Q_1\\
        u & = & g & \text{on} & \p_pQ_1,
    \end{array}\right.$$
    and assume that $g \geq 0$ on $\p_pQ_1$, and that
    $$\left|\{g \geq 1\}\cap\p_p Q_1\right| \geq c_0$$
    for some $c_0 \in (0,1)$.
    Then,
    $$u(0,0) \geq \theta,$$
    where $\theta > 0$ depends only on $c_0$ and the dimension.
\end{lem}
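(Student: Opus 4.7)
The plan is to use the Poisson representation for the heat equation on $Q_1$ together with the strict positivity of the associated caloric Poisson kernel.

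First I would reduce to the case $g=\chi_A$ with $A:=\{g\ge 1\}\cap\p_p Q_1$, so $|A|\ge c_0$. Indeed, by the comparison principle, if $v$ solves the heat equation in $Q_1$ with boundary data $\chi_A$ on $\p_p Q_1$, then $u\ge v$, and hence it suffices to show $v(0,0)\ge\theta$. By the Poisson formula,
\[
v(0,0)=\int_A P(y,s)\,d\sigma(y,s),
\]
where $P=P((0,0),\cdot)\ge 0$ is the caloric Poisson kernel of $Q_1$ at the pole $(0,0)$. Since $(0,0)$ lies on the top face, which is disjoint from $\p_p Q_1$, the kernel $P$ is smooth in the interior of each face of $\p_p Q_1$: on the bottom $B_1\times\{-1\}$ it equals the Dirichlet heat kernel $\Gamma_{B_1}(0,0;y,-1)>0$, and on the lateral face $\partial B_1\times(-1,0)$ it equals the (minus inward) normal derivative $-\partial_\nu\Gamma_{B_1}(0,0;\cdot,\cdot)>0$ by Hopf's lemma.

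The next step is a cutoff that handles the possible degeneracy of $P$ near the two edges $\partial B_1\times\{-1\}$ and $\partial B_1\times\{0\}$. Choose $\rho=\rho(c_0,n)>0$ small enough that the compact set
\[
R:=\bigl(\overline{B_{1-\rho}}\times\{-1\}\bigr)\cup\bigl(\partial B_1\times[-1+\rho,-\rho]\bigr)\subset\p_p Q_1
\]
satisfies $|\p_p Q_1\setminus R|<c_0/2$; this is possible because the complement has measure $O(\rho)$ as $\rho\to 0$. On the compact set $R$, the kernel $P$ is continuous and strictly positive, so $P\ge\kappa$ for some $\kappa=\kappa(c_0,n)>0$. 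Since $|A\cap R|\ge|A|-c_0/2\ge c_0/2$, we conclude
\[
u(0,0)\ge v(0,0)\ge\int_{A\cap R}P\,d\sigma\ge \kappa\cdot\tfrac{c_0}{2}=:\theta.
\]

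The main obstacle is the uniform lower bound $P\ge\kappa$ on $R$, i.e., ensuring that neither the Dirichlet heat kernel nor its boundary normal derivative degenerate on compact subsets of the interior of each face of the parabolic boundary. Both statements are classical: the first follows from strict positivity of $\Gamma_{B_1}$ via the strong maximum principle, and the second from Hopf's lemma applied to the spatial variable on the lateral face. Once these are in place, the cutoff estimate above closes the argument.
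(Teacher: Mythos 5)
Your proof is correct, but it takes a genuinely different route from the paper's. The paper first truncates $g$ and restricts attention to $\{t<-c_nc_0\}$, then splits into cases according to whether the bulk of $\{g\geq 1\}$ lies on the lateral boundary $\partial B_1\times(-1,-c_nc_0)$ or on the bottom $B_1\times\{-1\}$. For the lateral face it invokes the Fabes--Salsa quantitative comparison between caloric measure and surface measure; for the bottom face it uses a Dirichlet--Laplacian eigenfunction expansion to bound $\|u(\cdot,-1/2)\|_{L^2(B_1)}$ from below, converts this into an $L^\infty$ bound, and then propagates to the origin via the interior Harnack inequality. Your argument treats both faces uniformly through the caloric Poisson kernel of the cylinder: after reducing to $g=\chi_A$, you observe that the kernel with pole $(0,0)$ is continuous and strictly positive on the interior of each face (strong maximum principle for the bottom, Hopf lemma for the normal derivative on the lateral side), trim away an $O(\rho)$-measure neighborhood of the two edges where the kernel may degenerate, and use compactness to obtain a uniform lower bound $\kappa(c_0,n)>0$ on the remaining compact set $R$, with $|A\cap R|\geq c_0/2$. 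This is more unified and avoids both the Fabes--Salsa input and the spectral computation, at the cost of relying on the explicit representation of solutions in the smooth cylinder $B_1\times(-1,0)$ in terms of the Dirichlet heat kernel and its boundary normal derivative, which is entirely justified here. One small aside: the lateral kernel does not actually degenerate near $t=-1$ (the normal derivative of the heat kernel stays bounded away from zero as $\tau\to 1^-$), so that part of your cutoff is unnecessary, though harmless.
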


\begin{proof}
By truncating $g$, we may assume without loss of generality that $0 \leq g \leq 1$. Now, note that there exists $c_n > 0$ such that
$\left|\p_pQ_1\cap\{t > -c_nc_0\}\right| \leq \frac{c_0}{3}$. Hence,
$$\left|\{g \geq 1\}\cap\p_pQ_1\cap\{t < -c_nc_0\}\right| \geq \frac{2c_0}{3}.$$

Then, we can write
\begin{align*}
    u(0,0) &= \int_{B_1\times\{-1\}}g\mathrm{d}\omega_i + \int_{\p B_1\times(-1,0)}g\mathrm{d}\omega_l\\
    &\geq \omega_i\big(\{g \geq 1\}\cap(B_1\times\{-1\})\big)+\omega_l\big(\{g \geq 1\}\cap(\p B_1\times(-1,-c_nc_0)\big),
\end{align*}
where $\omega_i,\omega_l$ is the caloric measure with base point the origin, supported on the initial and the lateral data.

Now, if $|\{g \geq 1\}\cap(\p B_1\times(-1,-c_nc_0)| \geq \frac{c_0}{3}$, we use \cite[Theorem 3.1]{FS83} to conclude that
\begin{align*}
    u(0,0) &\geq \omega_l\big(\{g \geq 1\}\cap(\p B_1\times(-1,-c_nc_0)\big)\\
    &\geq c'|\{g \geq 1\}\cap(\p B_1\times(-1,-c_nc_0)| \geq \frac{c'c_0}{3} > 0.
\end{align*}
On the other hand, we would have $|\{g(\cdot,-1) \geq 1\}\cap B_1| \geq \frac{c_0}{3}$. In this case, we use separation of variables to write
$$u(x,t) = \sum\limits_{n \geq 1}c_ne^{-\lambda_n(1+t)}\phi_n(|x|),$$
where $\{\phi_n\}$ is the orthornormal base of $L^2(B_1)$ formed by the eigenfunctions of the Laplacian. Then, since $\phi_1$ is radially decreasing, the minimum possible value of the following integral happens when $\{g(\cdot,-1) \geq 1\} = B_1 \setminus B_{r_*(c_0)}$, and thus
\begin{align*}
    c_1 &= \int_{B_1}g(x,-1)\phi_1(x)\mathrm{d}x \geq \int_{\{g(\cdot,-1)\geq 1\}\cap B_1}\tilde\phi_1(|x|)\\
    &\geq \int_{r_*(c_0)}^1\tilde\phi_1(r)|\p B_1|r^{n-1}\mathrm{d}r =: a(c_0) > 0,
\end{align*}
where $\tilde\phi_1(r) := \phi_1(re)$ for any $e \in \p B_1$. Therefore, $\|u(\cdot,-\frac{1}{2})\|_{L^2(B_1)} \geq a(c_0)e^{-\lambda_1/2}$, and since, by the maximum principle, $0 \leq u \leq 1$,
\begin{align*}
    \sup\limits_{B_r\times\{-\frac{1}{2}\}}u &\geq \frac{\|u(\cdot,-\frac{1}{2})\|_{L^2(B_r)}}{|B_r|^{1/2}} \geq \frac{\|u(\cdot,-\frac{1}{2})\|_{L^2(B_1)}-\|u(\cdot,-\frac{1}{2})\|_{L^2(B_1\setminus B_r)}}{|B_1|^{1/2}}\\
    &\geq \frac{a(c_0)e^{-\lambda_1/2}-(|B_1|-|B_r|)^{1/2}}{|B_1|^{1/2}} \geq \frac{a(c_0)e^{-\lambda_1/2}}{2|B_1|^{1/2}} =: b(c_0) > 0,
\end{align*}
choosing $r \in (0,1)$ appropriately close to $1$, only depending on $c_0$ and the dimension.

Finally, by the interior Harnack (Theorem \ref{thm:interior_Harnack}),
$$u(0,0) \geq c\sup\limits_{B_r\times\{-\frac{1}{2}\}}u \geq cb(c_0) > 0,$$
as we wanted to see.
\end{proof}

\subsection{The parabolic obstacle problem}
The regularity properties of solutions to \eqref{eq:obstacleProblem} have been established in the works of Caffarelli, and we summarize them in the following theorem.

\begin{thm}[\cite{Caf77}, \cite{CF79}]\label{T:caf77}
    Let $u$ be a solution to \eqref{eq:obstacleProblem} and suppose that $(0,0)\in\partial\{u>0\}$. Then,
    
    \noindent	\begin{itemize}
        \item[(i)] $u\in C^{1,1}_x(Q_{1/2})\cap C^{1}_t(Q_{1/2})$ and there exists $C>0$ depending only on $n$ such that
        $$\|D^2 u\|_{L^\infty(Q_{1/2})}+\|u_t\|_{L^\infty(Q_{1/2})}\le C\|u(\cdot,0)\|_{L^\infty(B_1)}$$	
        \item [(ii)] $\Sigma$ is relatively closed in $\partial\{u>0\}$.
        
        \item [(iii)] There exists $\varepsilon_0 > 0$ and $C$ depending only on $n$ and $\|u\|_{L^\infty}$ such that 
        \begin{equation}\label{eq:semiconvexity}
            D^2 u \ge -C|\log r|^{-\varepsilon_0}\quad\text{in }Q_r,
        \end{equation}
        for every $r\in (0,\frac12)$.
        
        \item[(iv)] For every 
        \[\gamma\in(0,{\textstyle\frac12}) \qquad \textrm{and} \qquad 0<\varepsilon<\frac{2}{n-2}\]
        there exists $C$ depending only on~$n$, $\varepsilon$, $\gamma$, and $\|u\|_{L^\infty}$  such that
        \begin{equation}\label{eq:continuity:u_t}
            u_t\le C\omega_n(r) \quad\text{in }Q_r,
        \end{equation}
        for all $r\in(0,\frac12)$, where 
        \begin{equation}\label{eq:modulus:u_t}
            \omega_n(r):=\begin{cases}
                2^{-|\log r|^{\gamma}} &\text{if } n=2\\	
                |\log r|^{-\varepsilon} & \text{if } n\ge3.
            \end{cases}
        \end{equation}
	
    \end{itemize}
\end{thm}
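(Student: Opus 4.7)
Parts (i) and (ii) are standard Caffarelli theory. For (i), the $C^{1,1}_x$ bound is obtained by the classical blow-up/compactness argument: if the bound failed, a sequence of rescalings around points where the normalized Hessian attains its maximum would converge to a nontrivial global solution of the obstacle problem, contradicting the classification of global solutions into quadratic polynomials and half-space solutions. The $C^1_t$ bound then follows from the pointwise identity $u_t=\Delta u-\chi_{\{u>0\}}$ together with the $C^{1,1}_x$ control and the monotonicity $u_t\geq 0$. For (ii), one uses the continuity of the blow-up map at singular points together with the non-degeneracy estimate to pass to the limit along any convergent sequence of singular free-boundary points, showing that the set $\Sigma$ is closed within $\partial\{u>0\}$.

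The substantive content is in (iii) and (iv), due to Caffarelli--Friedman. For the semiconvexity estimate (iii), my plan is to track the dyadic decay of the quantity $m(r):=-\inf_{|e|=1}\inf_{Q_r} u_{ee}$. Since $(0,0)\in\Sigma$, the blow-up $p_{0,0}$ is a non-negative quadratic polynomial with $D^2 p_{0,0}\geq 0$; exploiting the fact that $u_{ee}$ is caloric in $\{u>0\}$ and comparing with $(p_{0,0})_{ee}$ at scale $r$, one can derive a self-improvement of the form $m(r/2)\leq (1-\eta)\, m(r)+\mathrm{error}(r)$, with the error reflecting the quantitative rate of convergence of $u$ to its blow-up. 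Iterating across dyadic scales then produces the announced logarithmic decay $m(r)\leq C|\log r|^{-\varepsilon_0}$ for some dimensional $\varepsilon_0>0$.

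For (iv), the plan is to convert (iii) into a one-sided estimate on $u_t$ via a capacity-type argument involving the contact set. Since $u_t\geq 0$ everywhere and $u_t\equiv 0$ on $\{u=0\}$, while in $\{u>0\}$ one has $u_t=\Delta u$, the size of $u_t$ near the free boundary is governed by the capacity of $\{u=0\}\cap Q_r$ through standard estimates for the heat equation: the contact set absorbs part of the caloric measure, which forces $u_t$ to be small provided $\{u=0\}$ is sufficiently thick. At a singular point the blow-up set $\{p_{0,0}=0\}$ contains a linear subspace, so the contact set is thick in a quantitative sense, and the ensuing bound on $u_t$ is dictated by the capacity of such a thick set -- logarithmic in $\R^2$, Newtonian in $\R^n$ for $n\geq 3$. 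This dimensional dichotomy is precisely what forces the different functional forms of $\omega_n(r)$. The hardest part, as anticipated, is making the iteration and the capacity bounds quantitative and compatible across all dyadic scales simultaneously; this is the technical core of \cite{CF78,CF79}.
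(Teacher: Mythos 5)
This is a preliminary theorem that the paper \emph{cites} from \cite{Caf77,CF79} and uses as a black box; there is no proof of it in the paper to compare against, so the comparison must be with the arguments in those references.

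Your sketch of (iii) and (iv) is in the correct spirit --- Caffarelli--Friedman do combine a dyadic iteration with thermal-capacity estimates and the dimensional dichotomy (logarithmic vs.\ Newtonian) is indeed where the two branches of $\omega_n$ come from. But there are three genuine gaps worth flagging.

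First, the claim that $C^1_t$ regularity ``then follows from the pointwise identity $u_t = \Delta u - \chi_{\{u>0\}}$'' is at best circular. That identity together with $C^{1,1}_x$ gives $u_t\in L^\infty$, which is all the displayed estimate in (i) asserts, but it does not give continuity of $u_t$: $\Delta u$ is only bounded, not continuous, across the free boundary. Continuity of the temperature $\theta = u_t$ is precisely the nontrivial content of \cite{CF78,CF79}, quantified by (iv); you cannot use it to ``prove'' part of (i).

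Second, your argument for (iii) starts from ``since $(0,0)\in\Sigma$, the blow-up $p_{0,0}$ is a non-negative quadratic polynomial with $D^2 p_{0,0}\ge 0$.'' But the theorem is stated for an arbitrary $(0,0)\in\partial\{u>0\}$, not only for singular points; at a regular point the blow-up is $\tfrac12(x\cdot e)_+^2$, whose Hessian is not $\geq 0$ across the free boundary. The actual Caffarelli--Friedman argument does not assume anything about the type of the free boundary point and works directly with the equation and with the contact set, which is necessary because one typically cannot know in advance whether a given free boundary point is singular.

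Third, the capacity heuristic in (iv) has the geometry backwards. At a singular point the contact set is \emph{thin}: its parabolic density tends to zero under blow-up, since it concentrates near the lower-dimensional set $\{p_{0,0}=0\}$. The reason $u_t$ decays only logarithmically at singular points is precisely that the zero set there is too sparse to absorb much caloric measure. The mechanism you describe (``the contact set is thick, so $u_t$ must be small'') describes regular points, where in fact $u_t$ decays much faster (polynomially), and the resulting trade-off is the opposite of what your sketch suggests. The hard part of \cite{CF78,CF79} is obtaining a useful lower bound on the thermal capacity of the contact set even when it is thin, not exploiting its thickness.
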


Next, we give the definition of regular and singular points on the free boundary $\partial\{{u>0}\}$. Let $u$ be a solution to \eqref{eq:obstacleProblem}, $(x_0,t_0)\in\partial\{u>0\}$ and define
    $$u_{x_0,t_0,r}(x,t):=r^{-2}u(x_0+rx,t_0+r^2t),$$
    which is also a solution  to \eqref{eq:obstacleProblem}. 
 \begin{itemize}
        \item[$\bullet$] $(x_0,t_0)$ is a \emph{regular point} if there exists $e \in \mathbb{S}^{n-1}$ such that
        $$u_{x_0,t_0,r}\to \frac{1}{2}(\max{0,e\cdot x})^2,\quad \text{as }r\to0^+,$$
        \item[$\bullet$]  $(x_0,t_0)$ is a \emph{singular point} if 
        $$u_{x_0,t_0,r}\to p_{2,x_0,t_0}\in \mathcal{P},\quad \text{as }r\to0^+,$$
        where
    $$\mathcal{P}:=\Big\{p(x)=\frac{1}{2}Ax\cdot x: A\in\R^{n,n}, A\ge0, \text{tr}(A)=1\Big\}.$$
    When $(x_0,t_0)=(0,0)$ we simply write $p_{2}=p_{2,x_0,t_0}$.
    \end{itemize}
     The convergence in both cases is locally uniformly in compact sets of $\mathbb{R}^n\times  \mathbb{R}$, and using the regularity estimates given by Theorem \ref{T:caf77} the convergence also holds in $C^1_\text{loc}$.
     
By the uniqueness of blow-ups at singular points (see \cite{Bla06}), one has that every free boundary point is either regular or singular. Furthermore, the set of regular points is relatively open in $\partial\{u>0\}$ and it is a $C^\infty$ manifold of dimension $n-1$. This last result was proved in \cite{Caf77,KN77}.

Finally, we define the singular set and the singular strata as follows:
    $$\begin{array}{lll}
    \Sigma & := & \big\{(x,t)\in \partial\{u>0\}: (x,t) \text{ is }\emph{singular}\big\},\\
    \Sigma_m & := & \big\{(x,t)\in\Sigma: \dim(\{p_{2,x,t}=0\})=m\big\}\\
    \Sigma^t & := & \big\{x: (x,t)\in\Sigma \big\},\\
    \Sigma_m^t & := & \big\{x: (x,t)\in\Sigma_m \big\},	
    \end{array}$$
for $m=0,\dots,n-1$.

\subsection{The second blow-up}
In this section, we state the characterization of the second blow-up, as established in \cite{FRS24}, where the authors prove some monotonicity formulae that enable them to derive this result for our equation.

First, we recall some definitions.
Let ${G}:\R^n\times(-\infty,0)\to\R$ be the Gaussian kernel for the heat operator, that is
$${G}(x,t):=\frac{1}{(-4\pi t)^{n/2}}\exp\left(  \frac{|x|^2}{4t} \right).$$
For every $w:\R^n\times(-1,1)\to\R$ and $r\in(0,1)$ let us define
\begin{align*}
    &D(r,w):=2r^2\int_{\{t=-r^2\}}|\nabla w|^2G,\\
    &H(r,w):=\int_{\{t=-r^2\}} w^2G,\\
    &\phi(r,w):=\frac{D(r,w)}{H(r,w)}.
\end{align*}

We have the following bounds relating $H$ and the norms of $u - p_2$.
\begin{lem}[\cite{FRS24}, Corollary 6.2, Lemma 6.3]
    Let $u$ be a solution to \eqref{eq:obstacleProblem}, $(0,0)\in\Sigma$. Then, there exists $C>1$ depending only on $n$ and $\|u\|_{L^\infty}$ such that for all $r \in (0,\frac{1}{2})$,
    
    \begin{equation}\label{eq:L2:Linfty}
        \|u-p_2\|_{L^\infty(Q_r)}\le C \|u-p_2\|_{L^2(Q_{2r})},
    \end{equation}
    
    \begin{equation}\label{eq:equivalent:norm}
        C^{-1}H(r,\xi (u-p_2))^{1/2}\le \|u-p_2\|_{L^2(Q_r)}\le C H(r,\xi (u-p_2))^{1/2}.
    \end{equation}
\end{lem}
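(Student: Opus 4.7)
The plan is to treat the two inequalities separately. Set $v := u - p_2$; since $p_2(x) = \tfrac{1}{2}Ax\cdot x$ is time-independent with $\Delta p_2 = \operatorname{tr}(A) = 1$, direct computation yields
\[ v_t - \Delta v \;=\; -\chi_{\{u>0\}} + 1 \;=\; \chi_{\{u=0\}} \quad\text{in}\quad Q_1, \]
so $v$ solves a heat equation with a nonnegative bounded forcing supported on the contact set. Moreover, by the blow-up convergence stated after Theorem~\ref{T:caf77}, the rescaling $v_r(x,t) := r^{-2}v(rx,r^2 t)$ tends to zero locally uniformly as $r\to 0^+$, and satisfies $(v_r)_t - \Delta v_r = f_r$ in $Q_2$ with $0 \le f_r \le 1$.

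For \eqref{eq:L2:Linfty}, I would work directly in the rescaled variables. Interpreting the $L^2$ norms as volume-normalized so that both sides of \eqref{eq:L2:Linfty} are scale-invariant, the claim reduces to $\|v_r\|_{L^\infty(Q_1)} \le C\|v_r\|_{L^2(Q_2)}$. Parabolic Calder\'on--Zygmund theory gives the weaker bound
\[ \|v_r\|_{L^\infty(Q_1)} \;\le\; C\bigl(\|v_r\|_{L^2(Q_2)} + \|f_r\|_{L^q(Q_2)}\bigr) \]
for any $q > (n+2)/2$, and the task is to absorb the forcing term. The key observation is that $v = -p_2$ on $\{u=0\}$: in rescaled variables, $v_r(x,t) = -p_2(x)$ on the contact set, so $\delta := \|v_r\|_{L^\infty(Q_2)}$ forces this set into $\{p_2 \le \delta\}$. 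Since $p_2 \ge 0$, the latter has Lebesgue measure that vanishes as a positive power of $\delta$, yielding $\|f_r\|_{L^q(Q_2)} \le C\delta^\alpha$ with $\alpha > 0$ depending on the stratum. Combined with the blow-up smallness $\delta \to 0$ as $r \to 0$, an iteration/bootstrap then absorbs the forcing contribution into $\|v_r\|_{L^2(Q_2)}$.

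For the equivalence \eqref{eq:equivalent:norm}, the strategy is to compare the space-time $L^2$ norm on $Q_r$ with the Gaussian-weighted single-slice norm $H$ at $t=-r^2$. The heat kernel satisfies $G(x,-r^2) \asymp r^{-n}$ on $B_r$ and decays super-exponentially outside, so one has the heuristic equivalence $H(r,\xi v)^{1/2} \asymp r^{-n/2}\|\xi v(\cdot,-r^2)\|_{L^2(B_r)}$ up to Gaussian tails tamed by the cutoff $\xi$. The two inequalities then follow by combining Caffarelli's time regularity $|v_t| = |u_t| \le C$ from Theorem~\ref{T:caf77}(i) --- which allows one to interchange the time-integrated and single-slice $L^2$ norms up to controlled errors --- with standard Gaussian decay estimates and the localization provided by $\xi$.

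The main obstacle is the absorption step in the proof of \eqref{eq:L2:Linfty}: removing the additive $\|f_r\|_{L^q}$ term demands a quantitative link between the pointwise smallness of $v_r$ and the measure of the rescaled contact set, and the self-improvement exponent degrades for higher strata and in low dimensions (especially $n=2$), where a direct bootstrap may not suffice and one presumably has to pass through Almgren-type monotonicity or refined parabolic estimates exploiting the $C^1$ convergence $v_r \to 0$ from Theorem~\ref{T:caf77}.
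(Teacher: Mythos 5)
This lemma is quoted from \cite{FRS24} (Corollary 6.2 and Lemma 6.3) and the paper at hand does not reprove it, so there is no internal argument to compare against; what follows assesses your proposal on its own and against the scheme in the cited reference. For \eqref{eq:L2:Linfty}, the bootstrap you propose does not close: combining the Calder\'on--Zygmund estimate with your contact-set measure bound gives at best $\|v_r\|_{L^\infty(Q_1)} \le C\|v_r\|_{L^2(Q_2)} + C\|v_r\|_{L^\infty(Q_2)}^\beta$ with $\beta<1$, and since $\|v_r\|_{L^\infty(Q_2)} \ge \|v_r\|_{L^\infty(Q_1)}$ and all these quantities tend to zero as $r\downarrow 0$, the error term is eventually \emph{larger} than either side of the inequality you want, so no iteration absorbs it. The missing idea is that the one-sided structure you notice in passing already yields \eqref{eq:L2:Linfty} with no inhomogeneous term at all. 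Set $w := u - p_2$. Since $w_t - \Delta w = \chi_{\{u=0\}} \ge 0$, the function $-w$ is a subsolution of the heat equation and hence so is $w_- = (-w)_+$; moreover on $\{w>0\}$ one has $u > p_2 \ge 0$, so $\chi_{\{u=0\}}$ vanishes there, $w$ is caloric on $\{w>0\}$, and thus $w_+$ is also a subsolution. Applying the De Giorgi--Moser local boundedness estimate for nonnegative subsolutions (with zero right-hand side) to each of $w_\pm$ gives $\sup_{Q_r} w_\pm \le C\|w_\pm\|_{L^2(Q_{2r})} \le C\|w\|_{L^2(Q_{2r})}$, which is exactly \eqref{eq:L2:Linfty} without ever needing to estimate the measure of $\{u=0\}$ or to bootstrap. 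This is the parabolic counterpart of the argument in \cite{FS19} for the elliptic obstacle problem.

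For \eqref{eq:equivalent:norm} the sketch is too loose at two points. First, the Gaussian tails are not automatically negligible: at time $t=-r^2$ the mass of $G$ outside $B_r$ is comparable to that inside, so the claimed equivalence $H(r,\xi w)^{1/2}\asymp r^{-n/2}\|\xi w(\cdot,-r^2)\|_{L^2(B_r)}$ does not come for free; one must combine the at-most-quadratic growth of $w$ with the super-exponential decay of $G$ on the annuli $\{|x|\sim 2^k r\}$ to see that the core $B_r$ dominates, and the lower bound in particular requires some care. Second, invoking the global bound $|u_t|\le C$ to interchange the single time-slice norm with the space--time $L^2$ norm is too weak: it only controls the time variation of $w$ on $Q_r$ by $O(r^2)$, which is not small compared with $\|w\|_{L^2(Q_r)}$ (itself of size $o(r^2)$). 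What is actually exploited is the sign $w_t=u_t\ge 0$, giving monotonicity in time of each slice, together with the sub/supersolution structure of $w$ and Caccioppoli-type energy estimates relating the slice $\{t=-r^2\}$ to the space--time average over $Q_r$. As written, the proposed proof of \eqref{eq:equivalent:norm} is a plausibility argument rather than a proof.
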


Let $\xi\in C_c^\infty(B_{1/2})$ let be a spatial cut-off function such that $\xi\equiv1$ in $B_{1/4}$ and $\xi\ge0$. Then, we have a characterization of the second blow-ups.

\begin{prop}[\cite{FRS24}, Lemma 5.8, Corollary 5.9, Proposition 6.7]\label{pro:classification}
    Let $u$ be a solution to \eqref{eq:obstacleProblem}, $(0,0)\in\Sigma$, $w:=u-p_2$, $m=\dim(\{p_2=0\})$. Then, the following limit exists
    $$\lim_{r\to0^+}\phi(r,w\xi):=\lambda_*.$$
    Set
    \[
    \tilde{w}_r:=\frac{w(r\cdot,r^2\cdot)}{H(r,\xi w)^{1/2}}.
    \]
    Then, for every $r_k\to0^+$, there exists a subsequence $r_{k_l}$ such that 
    $$\tilde w_{r_{k_l}}\to q,\quad \nabla\tilde w_{r_{k_l}} \rightharpoonup \nabla q  \quad \text{in } L^2_{loc}(\R^n\times(-\infty,0]),$$
    where $q\not\equiv0$ is $\lambda_*$-homogeneous. Moreover, we have that
    \begin{itemize}
        \item[(a)] If $m\in\{0,\dots,n-2\}$ then $\lambda_*=2$ and, in some coordinates, 
        \begin{equation*}
            p_2(x)=\frac{1}{2}\sum_{i=m+1}^n \mu_ix_i^2,\quad q(x,t)=At+\nu \sum_{i=m+1}^n x_i^2-\sum_{i=1}^{m} \nu_i x_i^2,
        \end{equation*}
        where $\mu_i>0$, $A\ge0$, $\nu\ge0$, $\sum_{i=m+1}^n \mu_i =1$, and $$A-2(n-m)\nu+2\sum_{i=1}^{m} \nu_i=0,$$
        which means that $q_t-\Delta q=0$. Moreover, there exist constants $0<c_1\le c_2$ such that        
        $$c_1\le\|q\|_{L^2(Q_1)}\le c_2.$$
        \item[(b)] If $m=n-1$ then $\lambda_*\in[2+\alpha_0,3]$ for some constant $\alpha_0=\alpha_0(n)\in(0,1]$ and $q$ solves the parabolic thin obstacle problem.
    \end{itemize}
    
\end{prop}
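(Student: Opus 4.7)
My plan is to treat this as a second blow-up analysis for $w := u - p_2$ via a Poon-type (parabolic Almgren) frequency. Since $(0,0)$ is singular and $p_2$ is the unique 2-homogeneous blow-up, $w$ vanishes at the origin strictly faster than second order and satisfies $w_t - \Delta w = 1 - \chi_{\{u>0\}} = \chi_{\{u=0\}}$ (using $\mathrm{tr}(A)=1$). The functionals $D,H,\phi$ are tailored to detect exactly this extra vanishing: I expect $\phi(r,w\xi)$ to converge to a well-defined $\lambda_* \geq 2$, and $w$ to admit a nontrivial $\lambda_*$-homogeneous profile $q$ after normalization by $H(r,\xi w)^{1/2}$.

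The first technical step is an almost-monotonicity formula for $\phi(r,w\xi)$. Differentiating in $\log r$ and integrating by parts against the backward Gaussian $G$, I would obtain $\phi'(r)\geq -E(r)$ with three error contributions: the cutoff commutator with $\xi$, which is exponentially small because $G$ concentrates at the origin and $\xi\equiv 1$ on $B_{1/4}$; the obstacle right-hand side $\chi_{\{u=0\}}$ tested against $w\xi G$; and a cross term generated by the presence of $p_2$. Using \eqref{eq:L2:Linfty}--\eqref{eq:equivalent:norm}, one aims to check that these errors are integrable in $dr/r$, which gives both existence of $\lambda_* = \lim_{r\to 0^+}\phi(r,w\xi)$ and a weighted $L^2$ bound that makes the rescalings $\tilde w_r$ precompact. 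Passing to the limit along a subsequence $r_{k_l}\to 0^+$, the identity $\phi(\rho,q)\equiv\lambda_*$ for all $\rho>0$ forces the limit $q$ to be $\lambda_*$-homogeneous, and the two-sided bound $c_1\leq\|q\|_{L^2(Q_1)}\leq c_2$ falls out of \eqref{eq:equivalent:norm} and the definition of $\tilde w_r$.

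For the classification I would split on $m=\dim\{p_2=0\}$. When $m\leq n-2$, the set $\{p_2=0\}$ has zero Lebesgue measure, so the $C^1_{\mathrm{loc}}$ convergence $u_{0,0,r}\to p_2$ gives $|\{u_r=0\}\cap K|\to 0$ on every compact $K$; hence the right-hand side in the equation for $\tilde w_r$ vanishes in $L^1_{\mathrm{loc}}$, and $q$ is caloric globally. A $\lambda_*$-homogeneous caloric function compatible with the block structure of $p_2$ must then be a polynomial of degree $\lambda_*=2$, and writing it in the coordinates diagonalizing $p_2 = \tfrac12\sum_{i>m}\mu_i x_i^2$ yields the stated form, with the trace condition $A-2(n-m)\nu+2\sum_{i\leq m}\nu_i=0$ encoding $q_t-\Delta q=0$. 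When $m=n-1$, by contrast, $\{p_2=0\}$ is a hyperplane that survives the rescaling, so the right-hand side produces a genuine thin obstacle along it; $q$ thus solves a parabolic Signorini problem, and the gap $[2+\alpha_0,3]$ for $\lambda_*$ comes from the known range of admissible homogeneities of that problem, combined with the exclusion of the $2$-homogeneous caloric polynomials, which are ruled out because they would make $w$ effectively quadratic, contradicting the fact that $p_2$ already absorbs the full quadratic part of $u$.

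The main obstacle is the control of the obstacle error term $\int \chi_{\{u=0\}}\,w\xi\,G$ appearing in the monotonicity computation when $m\leq n-2$: to integrate it in $dr/r$ one needs a quantitative rate of decay for $|\{u=0\}\cap Q_r|$ together with a matching lower bound for $H(r,\xi w)$. Combining \eqref{eq:L2:Linfty}--\eqref{eq:equivalent:norm} with the fact that the contact set concentrates near a set of Hausdorff dimension at most $m\leq n-2$ should suffice, but the precise exponents require a delicate bootstrap; this is the genuinely new content that one borrows from the monotonicity formulae of \cite{FRS24}.
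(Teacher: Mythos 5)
This proposition is a cited result (it is imported verbatim from \cite{FRS24}, Lemma 5.8, Corollary 5.9, Proposition 6.7), so the paper itself supplies no proof to compare against; both you and the paper ultimately lean on \cite{FRS24} for the substance. Your sketch does correctly identify the strategy of \cite{FRS24}: a Poon-type parabolic frequency $\phi(r,w\xi)$ with an almost-monotonicity formula whose error terms (cutoff commutator, the right-hand side $\chi_{\{u=0\}}$, and the $p_2$ interaction) must be shown integrable in $dr/r$, followed by compactness and a classification of the homogeneous limit. You also correctly compute $w_t-\Delta w=\chi_{\{u=0\}}$ from $\mathrm{tr}(A)=1$, and you correctly locate the dichotomy in the measure/codimension of $\{p_2=0\}$: when $m\leq n-2$ the rescaled contact set degenerates and $q$ is globally caloric, while when $m=n-1$ it persists as a hyperplane and $q$ solves a parabolic Signorini problem.

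However, there is a genuine gap in your argument that $\lambda_*=2$ in case (a). You write that ``a $\lambda_*$-homogeneous caloric function compatible with the block structure of $p_2$ must then be a polynomial of degree $\lambda_*=2$,'' but this is false as stated: the caloric Hermite polynomials provide homogeneous caloric functions of every nonnegative integer degree, so ``caloric $+$ homogeneous'' alone does not pin $\lambda_*=2$. In \cite{FRS24}, the equality $\lambda_*=2$ for $m\leq n-2$ requires the additional sign information carried along to the limit — namely $q_t\geq 0$ (from monotonicity in time) and $D^2q\big|_{L_0^\perp}\geq 0$ (from semiconvexity) — together with an argument that rules out higher frequency; ``compatibility with the block structure of $p_2$'' is not a substitute for this. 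Similarly, in case (b) your exclusion of $\lambda_*=2$ via ``$p_2$ already absorbs the full quadratic part of $u$'' is heuristic: a nonzero quadratic $q$ could in principle be Gaussian-orthogonal to $\mathcal{P}$, and the actual exclusion plus the gap $\lambda_*\geq 2+\alpha_0$ is a nontrivial rigidity result for the parabolic thin obstacle problem established in \cite[Proposition 6.7]{FRS24}. You do flag that the integrability of the monotonicity error terms is ``the genuinely new content that one borrows from \cite{FRS24},'' which is honest, but the same caveat should be attached to the $\lambda_*=2$ claim and to the spectral gap in (b): as written, these two steps are not proved.
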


\section{Expansion of solutions at singular points}\label{sec:expansion}

The goal of this section is to show an expansion of solutions at singular points, by using the semiconvexity of the solutions and a compactness argument (see \cite{FS19} for the elliptic case). 

\begin{thm}\label{thm:expansion}
    Let $u$ be a solution to \eqref{eq:obstacleProblem} such that $(0,0)\in\Sigma$. Let $\gamma \in (0,\frac12)$ and $0<\varepsilon<\frac{2}{n-2}$.
    Then, there exists a constant $C>0$ depending only on $n$, $\gamma$, $\varepsilon$, and $\|u\|_{L^\infty}$, such that 
    \begin{equation*}
        |u(x,t)-p_2(x)|\le C(|x|^2+|t|)\sigma\big(\sqrt{|x|^2+|t|}\big)\quad \text{in} \quad Q_{1/2},
    \end{equation*}
    where $\sigma:\R_+\to \R_+$ is defined by
    \begin{equation*}
        \sigma(r)=\begin{cases}
            r^{\alpha_0}     &\text{if }      (0,0)\in\Sigma_{n-1},       \\
            |\log r|^{-\varepsilon_0} &\text{if }   (0,0)\in\Sigma_{m}, \text{ for } m\in\{1,\dots,n-2\},        \\
            2^{-|\log r|^\gamma}      &\text{if }      (0,0)\in\Sigma_{0} \text{ and }n=2\\  
            |\log r|^{-\varepsilon}      &\text{if }      (0,0)\in\Sigma_{0} \text{ and }n\ge 3,
        \end{cases}
    \end{equation*}	
    where $\alpha_0 \in (0,1]$ is the constant as in Proposition \ref{pro:classification} $(b)$, $\varepsilon_0>0$ is a small constant given by \eqref{eq:semiconvexity}.
\end{thm}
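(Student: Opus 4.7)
The plan is a compactness-contradiction argument at the second-blow-up scale, modelled on the elliptic version in \cite{FS19} and driven by Proposition \ref{pro:classification} together with the a priori bounds (iii)--(iv) of Theorem \ref{T:caf77}. Setting $w:=u-p_2$, the function $w$ satisfies $w_t-\Delta w=\chi_{\{u=0\}}$ in the distributional sense, and $w\equiv -p_2$ on the contact set. The aim is to prove $\sup_{Q_r}|w|\leq Cr^2\sigma(r)$ for all small $r$; the four forms of $\sigma$ are tailored to match whichever a priori estimate (semiconvexity, $u_t$-bound, or second-blow-up gap) is saturated in the stratum containing $(0,0)$.

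Suppose the estimate fails. A standard Campanato-type dyadic selection produces scales $r_k\downarrow 0$ and $\theta_k\uparrow\infty$ with $\sup_{Q_{r_k}}|w|=\theta_k r_k^2\sigma(r_k)$, while $\sup_{Q_\rho}|w|\leq 2\theta_k\rho^2\sigma(\rho)$ for every $\rho\in[r_k,\tfrac12]$. Normalising
$$
\tilde w_k(x,t):=\frac{w(r_kx,r_k^2t)}{\theta_k r_k^2\sigma(r_k)},
$$
I get $\|\tilde w_k\|_{L^\infty(Q_1)}=1$, and since each $\sigma$ is slowly varying, $\|\tilde w_k\|_{L^\infty(Q_R)}\leq 2R^2\sigma(Rr_k)/\sigma(r_k)\to 2R^2$ as $k\to\infty$ for every fixed $R>0$. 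The rescaled equation
$$
(\tilde w_k)_t-\Delta\tilde w_k=\frac{1}{\theta_k\sigma(r_k)}\,\chi_{\{u=0\}}(r_kx,r_k^2t)
$$
has right-hand side tending to zero since $\theta_k\sigma(r_k)\to\infty$.

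The next step is to extract a limit. The particular $\sigma$ appearing in the theorem is chosen so that the governing a priori estimate survives the normalisation: for $(0,0)\in\Sigma_m$ with $1\leq m\leq n-2$, semiconvexity from Theorem \ref{T:caf77}(iii) gives $D^2\tilde w_k\geq -C$; for $\Sigma_0$, bound (iv) gives $|(\tilde w_k)_t|\leq C$; and for $\Sigma_{n-1}$ the rate $r^{\alpha_0}$ is encoded via the exponent gap in Proposition \ref{pro:classification}(b). Combined with standard parabolic estimates, a subsequential limit $\tilde w$ exists, solves the heat equation on $\R^n\times(-\infty,0]$, grows at most quadratically, and inherits a rigidity from the saturated estimate (spatial convexity, time-independence, or homogeneity $\geq 2+\alpha_0$, respectively).

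The contradiction comes from combining this rigidity with the classification of the second blow-up of $w$ itself (Proposition \ref{pro:classification}). In the first three cases the limit is forced to be $2$-homogeneous; together with convexity or time-independence, $\tilde w$ reduces to a quadratic polynomial, and the constraint coming from the structure of $p_2$ (whose zero set is an $m$-plane) rules out every non-zero admissible choice. In $\Sigma_{n-1}$, the homogeneity $\lambda_*\geq 2+\alpha_0$ from Proposition \ref{pro:classification}(b) is directly incompatible with the normalisation at rate $r^{\alpha_0}$ unless $\tilde w\equiv 0$. Each case yields $\tilde w\equiv 0$, contradicting $\|\tilde w\|_{L^\infty(Q_1)}=1$. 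I expect the principal obstacle to be precisely this final rigidity step: classifying the limit in the logarithmic strata, where the gap between the a priori estimate and the desired rate is infinitesimal, so that the passage to the limit requires careful bookkeeping of how $\sigma(Rr_k)/\sigma(r_k)\to 1$ and of how the contact-set geometry propagates through the rescaling.
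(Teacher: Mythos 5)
Your overall strategy (contradiction via a second-blow-up compactness argument, combined with Proposition~\ref{pro:classification} and the a priori bounds of Theorem~\ref{T:caf77}) is the same as the paper's, and the Campanato-type selection is a legitimate variant of the fixed-$M$ argument the paper uses. However, there are two genuine errors in the details that, as written, would break the proof.

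First, the claim that the right-hand side of the rescaled equation tends to zero ``since $\theta_k\sigma(r_k)\to\infty$'' is backwards. By your own normalisation, $\theta_k\sigma(r_k)=\sup_{Q_{r_k}}|w|/r_k^2=:a_{r_k}$, and $a_{r_k}\to0$ because the first blow-up of $u$ converges to $p_2$. Thus the coefficient $1/(\theta_k\sigma(r_k))=1/a_{r_k}$ in front of $\chi_{\{u=0\}}(r_k\cdot,r_k^2\cdot)$ actually blows up. The reason the limit still solves the heat equation (for $m\le n-2$) is the thinness of the rescaled contact set, which is not at all elementary; this is precisely what Proposition~\ref{pro:classification} provides, and the paper's proof simply invokes it rather than re-deriving that the forcing vanishes. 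You must do the same.

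Second, the statement that semiconvexity gives $D^2\tilde w_k\ge-C$ is false in general. Semiconvexity controls $D^2u$, so $D^2w=D^2u-D^2p_2\ge-C|\log r|^{-\varepsilon_0}-D^2p_2$. After dividing by $\theta_k\sigma(r_k)=a_{r_k}\to0$, the $D^2p_2$ term blows up in any direction in which $\partial_{ee}p_2\ne0$. The estimate is only useful for $e\in L_0=\{p_2=0\}$, where $\partial_{ee}p_2=0$, and that restriction is essential for the final rigidity step: the paper's contradiction (its claim \eqref{eq:xxx}) is precisely that a nontrivial polynomial $q$ of the form in Proposition~\ref{pro:classification}(a) cannot simultaneously satisfy $\partial_{ee}q\ge0$ for all $e\in L_0$, $q_t\le0$, and $q_t-\Delta q=0$. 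Related to this, your normalised bounds should be $O(1/\theta_k)$, tending to zero (so that the limit inherits \emph{exact} convexity along $L_0$ and time-independence), not merely $O(1)$; and for the intermediate strata $1\le m\le n-2$ you need both the semiconvexity bound and the $u_t$ bound, since the structure of $q$ in Proposition~\ref{pro:classification}(a) couples the $At$ coefficient to the spatial ones via $q_t-\Delta q=0$.

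With those corrections — invoke Proposition~\ref{pro:classification} for the limit, restrict semiconvexity to $L_0$ directions, and track the $1/\theta_k$ factor — your proposal becomes essentially the paper's proof.
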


\begin{proof}
    By using \eqref{eq:L2:Linfty}, it is enough to prove that for every $r\in(0,1/2)$ it holds
    \begin{equation}\label{eq:l2:expansion}
        \|u(r\cdot,r^2\cdot)-p_{2}(r\cdot)\|_{L^2(Q_2)}\le Cr^2\sigma(r).
    \end{equation}
   
    \noindent
    \emph{Case 1.} Let $(0,0)\in\Sigma_{n-1}$. By using \eqref{eq:equivalent:norm} and \cite[Lemma 5.6(b)]{FRS24}, we obtain that
\begin{align*}
\| u(r \cdot,r^2 \cdot)-p_{2}(r\cdot)& \|_{L^2(Q_2)}\le C H(2r,\xi(u-p_2))^{1/2}\\
&\le Cr^{\lambda_*}( H(1,\xi(u-p_2))^{1/2}+1)^{1/2}\le Cr^{\lambda_*},
\end{align*}   
where $\lambda_*:=\lim_{r\to0^+}\phi(r,w\xi)$ and the constant $C>0$ depends only on $n$ and $\|u\|_{L^\infty}$. By using Proposition \ref{pro:classification}, it follows that the frequency $\lambda_*\geq2+\alpha_0$ and \eqref{eq:l2:expansion} follows in the case $\Sigma_{n-1}$.

    \bigskip

    \noindent
    \emph{Case 2.} Let us suppose now that $(0,0)\in\Sigma_{m}$ for $m\in\{1,\dots, n-2\}$. We define $L_0:=\{x:p_2(x)=0\}$, and have that
    \begin{equation}\label{eq:ar}
        a_r:= \| r^{-2}u(r \cdot,r^2 \cdot)-p_{2}(\cdot) \|_{L^2(Q_2)}=o(1),
    \end{equation}
    by the blow-up convergence. 
    
    %ref is given in the introducion, but it's not a remark neither a lemma
    
    By contradiction, let us suppose that there exists $r_k\downarrow0$ and a constant $M>1$ to be chosen later such that
    \begin{equation}\label{eq:contradiction1}
        a_{r_k}\ge M |\log r_k|^{-\varepsilon_0}.
    \end{equation}
    By using \eqref{eq:semiconvexity} and \eqref{eq:continuity:u_t}, we have that
    \begin{equation}\label{eq:*1}
        \begin{cases}
        \partial_{t}\big(r^{-2}u(r \cdot,r^2 \cdot)-p_{2}(\cdot)\big)=\partial_{t}u(r \cdot,r^2 \cdot)\le C \omega_n(r) \le C|\log r|^{-\varepsilon_0},\\
            \partial_{ee}\big(r^{-2}u(r \cdot,r^2 \cdot)-p_{2}(\cdot)\big)=\partial_{ee}u(r \cdot,r^2 \cdot)\ge-C|\log r|^{-\varepsilon_0},
            \end{cases}
    \end{equation}
    for every $e\in L_0\cap\mathbb{S}^{n-1}$.

    Next, by using \eqref{eq:equivalent:norm} and \eqref{eq:contradiction1} we get that there exists $C_2>0$ depending only on $n$ and $\|u\|_{L^\infty}$, such that
    \begin{equation}\label{eq:*2}
        H(r_k,\xi(u-p_2))^{1/2}\ge C_2 a_{r_k}r_k^2\ge C_2 M r_k^2|\log r_k|^{-\varepsilon_0}.
    \end{equation}
    Let us define
    \begin{equation}\label{eq:tilde:w}
        \tilde{w}_{r_k}:=\frac{u(r_k\cdot,r_k^2\cdot)-p_2(r_k\cdot)}{H(r_k,\xi(u-p_2))^{1/2}}.
    \end{equation}
    By applying Proposition \ref{pro:classification}, it follows that, up to a subsequence,
    $$\tilde{w}_{r_k}\to q \text{ in }L^2(Q_1),$$
    where $q$ is a parabolic $2$-homogeneous polynomial satisfying
    \begin{equation}\label{eq:q}
        D^2 q_{|_{L_0^\perp}}\ge0,\quad q_t\ge0, \quad\partial_tq -\Delta q=0,\quad 0<\bar{c}_1\le\|q\|_{L^2(Q_1)}\le\bar{c}_2.
    \end{equation}
    Moreover, by using \eqref{eq:*1} and \eqref{eq:*2}, we get that
    \[
        \partial_{ee}\tilde{w}_{r_k}=\frac{r_k^2 \partial_{ee}u(r_k\cdot,r_k^2\cdot)}{H(r_k,\xi(u-p_2))^{1/2}}\ge  -\frac{Cr_k^2|\log r_k|^{-\varepsilon_0}}{H(r_k,\xi(u-p_2))^{1/2}}\ge \frac{-C}{C_2M},
    \]
    for every $e\in L_0\cap\mathbb{S}^{n-1}$, and analogously 
    \[
        \partial_{t}\tilde{w}_{r_k}\le \frac{C}{C_2M}.
    \]
    Taking the limit in the previous two inequalities we get 
    \begin{equation}\label{eq:*5}
        \begin{cases}
            \partial_{ee}q \ge -\frac{C}{C_2M}, &\forall e\in L_0\cap\mathbb{S}^{n-1},\text{ in } Q_1,\\
            \partial_{t}q \le \frac{C}{C_2 M}, &\text{ in } Q_1.
        \end{cases}
    \end{equation}
    Now, we claim that there exists a constant $C_1>0$ such that
    one of the following holds:
    \begin{align}\label{eq:xxx}
        \begin{aligned}
            &(a) \quad\text{there exists } \tilde{e}\in L_0\cap\mathbb{S}^{n-1} \text{ such that } \min_{Q_1}\partial_{\tilde{e}\tilde{e}}q \le -C_1,\\
            &(b) \quad \max_{Q_1}q_t\ge C_1.
        \end{aligned}
    \end{align}  
    By contradiction, let us suppose that there exists a sequence $q^{(j)}$ satisfying \eqref{eq:q} and 
    \[
        q_t^{(j)}\le\frac{1}{j}, \quad \partial_{\tilde{e}\tilde{e}}q^{(j)}\ge-\frac{1}{j},\quad  \forall\tilde{e}\in L_0\cap\mathbb{S}^{n-1}.
    \]
    Since $q^{(j)}$ belongs to a finite dimensional space and $\|q^{(j)}\|_{L^2(Q_1)}\le \bar{c_2}$, by compactness, up to a subsequence, there exists a limit parabolic $2$-homogeneous polynomial $q^{(\infty)}$ satisfying
    \begin{align*}
        &q_t^{(\infty)}\le0, \quad \partial_{\tilde{e}\tilde{e}}q^{(\infty)}\ge0,\quad  \forall\tilde{e}\in L_0\cap\mathbb{S}^{n-1},\\
        &D^2 q^{(\infty)}_{|_{L_0^\perp}}\ge0, \quad q_t^{(\infty)} -\Delta q^{(\infty)}=0,\quad \|q^{(\infty)}\|_{L^2(Q_1)}\ge\bar{c}_1,
    \end{align*}
    which is a contradiction, so the claim is true. Then, by choosing $M$ big enough in \eqref{eq:*5} we get a contradiction with \eqref{eq:xxx} and \eqref{eq:l2:expansion} holds true for $m=1,\dots,n-2$.
    
    \bigskip

    \noindent
    \emph{Case 3.}  Finally, let us suppose that $(0,0)\in\Sigma_{0}$ and define $a_r$ as in \eqref{eq:ar}. By contradiction, let us suppose that 
    \[
        a_{r_k}\ge M \omega_n(r_k)
    \]
    for a sequence $r_k\downarrow0$ and a constant $M>1$ to be chosen later, where $\omega_n$ is given by~\eqref{eq:modulus:u_t}.
    
    By \eqref{eq:continuity:u_t} it follows that
    $$\partial_{t}\big(r^{-2}u(r \cdot,r^2 \cdot)-p_{2}(\cdot)\big) =u_t (r\cdot,r^2\cdot) \le C\omega_n( r),$$
    Defining $\tilde{w}_{r_k}$ as in \eqref{eq:tilde:w} and by using the same computations as in \emph{Case 2}, we get that
    \[
        \partial_{t}\tilde{w}_{r_k}\le \frac{C}{C_2M}.
    \]
    This inequality, combined with Proposition \ref{pro:classification}, allows us to conclude that
    $\tilde{w}_{r_k}\to q$ in $L^2(Q_1)$ where $q$ is a parabolic $2$-homogeneous polynomial satisfying
    \[
        0\le q_t \le \frac{C}{C_2 M},\quad D^2 q\ge0, \quad\partial_tq -\Delta q=0,\quad 0<\bar{c}_1\le\|q\|_{L^2(Q_1)}\le\bar{c}_2.
    \]
    From this point on, arguing as in \emph{Case 2} with minor differences, we  get a contradiction and our statement follows in the case $\Sigma_0$.   
\end{proof}

As an immediate consequence of the expansion theorem, we obtain the following regularity characterization of the \emph{spatial} part of the singular set, which improves the result proved in \cite{LM15}, by showing an explicit modulus of continuity.

\begin{cor}
    Let $u$ be any solution to \eqref{eq:obstacleProblem}, $\alpha_0>0$ be the constant from Proposition~\ref{pro:classification}~$(b)$ and $\varepsilon_0>0$ as in \eqref{eq:semiconvexity}. Then the following holds true.
    \begin{enumerate}
        \item[(a)] $\Sigma_{n-1}^t$ is locally contained in a $C_x^{1,\alpha_0}$ manifold of dimension $(n-1)$.
        \item[(b)] If $m\in\{1,\dots,n-2\}$, $\Sigma_{m}^t$ is locally contained in a $C_x^{1,\log^{\varepsilon_0}}$ manifold of dimension $m$.
        \item[(c)] $\pi_x(\Sigma_{n-1}\cap Q_{1/2})$ is locally contained in a $C_x^{1,\alpha_0}$ manifold of dimension $(n-1)$.
        \item[(d)] If $m\in\{1,\dots,n-2\}$, $\pi_x(\Sigma_{m}\cap Q_{1/2})$ is locally contained in a $C_x^{1,\log^{\varepsilon_0}}$ manifold of dimension $m$.
    \end{enumerate}
\end{cor}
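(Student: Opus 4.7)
The plan is to apply Whitney's extension theorem, with modulus provided by Theorem \ref{thm:expansion}, to the map $(x_0,t_0)\mapsto p_{2,x_0,t_0}$ defined on the singular set. The tangent plane at $(x_0,t_0)\in\Sigma_m$ will be the $m$-dimensional kernel $L_{x_0,t_0}:=\{p_{2,x_0,t_0}=0\}$; the argument has three steps: quantitative continuity of the blow-up polynomial along $\Sigma_m$, verification of the Whitney compatibility conditions, and, for parts (c) and (d), projection of the resulting space-time manifold onto $\R^n$.

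First, I would compare the two expansions given by Theorem \ref{thm:expansion} at a pair of singular points $(x_0,t_0),(x_1,t_1)\in\Sigma_m$ lying in a neighborhood of some fixed $(0,0)\in\Sigma_m$. The polynomials $p_{2,x_0,t_0}(\,\cdot\,-x_0)$ and $p_{2,x_1,t_1}(\,\cdot\,-x_1)$ must agree up to an error $d_p^{\,2}\sigma(d_p)$ on a parabolic ball of radius $\asymp d_p$, where $d_p$ is the parabolic distance \eqref{parabolic-d}. Isolating the quadratic part yields $\|A^{x_0,t_0}-A^{x_1,t_1}\|\le C\sigma(d_p)$ for the coefficient matrices, and since in a neighborhood of $(0,0)$ these are small perturbations of $A^{(0,0)}$ (whose non-zero eigenvalues are bounded away from $0$), the kernels $L_{x,t}$ vary continuously in the Grassmannian with the same modulus $\sigma$.

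For the Whitney position condition, I would evaluate the expansion at $(x_0,t_0)$ on the singular point $(x_1,t_1)$, using $u(x_1,t_1)=0$; this gives $p_{2,x_0,t_0}(x_1-x_0)\le Cd_p^{\,2}\sigma(d_p)$. The quadratic non-degeneracy of $p_{2,x_0,t_0}$ on $L_{x_0,t_0}^\perp$ then translates into $\operatorname{dist}(x_1-x_0,L_{x_0,t_0})\le C d_p\,\sigma(d_p)^{1/2}$. Combined with the tangent-plane continuity above, these are precisely the Whitney hypotheses for a $C^{1,\omega}$ $m$-dimensional graph, and Whitney's extension theorem produces a space-time manifold $\tilde M$ of that regularity containing $\Sigma_m\cap Q_{1/2}$; here $\omega$ is a power of $\sigma$ that is absorbed into $\alpha_0$ or into the exponent of $|\log r|$ after a harmless relabelling of constants. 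Parts (a) and (b) then follow immediately by restricting to a fixed time $t_0$, which reduces the construction to a purely spatial Whitney extension in $\R^n$.

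For parts (c) and (d) the remaining task is to show that $\pi_x(\tilde M)\subset \R^n$ is itself a $C^{1,\omega}$ $m$-manifold. The tangent space of $\tilde M$ at $(x_0,t_0)$ is the \emph{purely spatial} subspace $L_{x_0,t_0}\subset \R^n\times\{0\}$, so $\pi_x$ restricts to an immersion. Injectivity of $\pi_x$ on $\Sigma_m$ follows from the fact that the whole free boundary is a graph $\{t=\Gamma(x)\}$ (Caffarelli, \cite{Caf78}), itself a consequence of $u_t\ge0$ which assigns to each spatial point a unique first positivity time. Hence $\pi_x$ is a local diffeomorphism on $\tilde M$ and transfers the regularity to $\R^n$, modulo the standard comparison between parabolic and Euclidean distance along the horizontal graph, which near a singular point costs only a fixed power and is absorbed into $\alpha_0$ or $\varepsilon_0$. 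The main technical obstacle throughout is the first step: converting the expansion modulus $\sigma$ for the scalar $u$ into a joint quantitative continuity of the matrix $A^{x,t}$ \emph{and} its kernel $L_{x,t}$, which is what makes the Whitney compatibility conditions available.
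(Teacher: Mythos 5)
Your high-level strategy — applying Whitney's extension theorem with the modulus $\sigma$ from Theorem~\ref{thm:expansion}, using $\{p_{2,x_0,t_0}=0\}$ as the candidate tangent space, and using $p_{2,x_0,t_0}(x_1-x_0) = O(d^2\sigma(d))$ as the Whitney position condition — is indeed the approach of the paper, and for parts (a) and (b) your argument essentially coincides with the paper's (in fact the paper does the spatial Whitney directly on the time slice $t=t_0$, bypassing the space-time manifold). One technicality you skip: $\Sigma_m^{t_0}$ need not be closed, so the paper first passes to the closed superset $S_{\lambda_m}^{t_0}=\{x\in\Sigma^{t_0}:\lambda_*\geq\lambda_m\}$, using the upper semicontinuity of the frequency map from \cite[Lemma 7.4]{FRS24}; the expansion of Theorem~\ref{thm:expansion} then holds with the stated modulus on all of $K=S_{\lambda_m}^{t_0}\cap\overline{B_{1/4}}$, which is needed for Whitney.

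For parts (c) and (d), however, there is a genuine gap. You build the space-time Whitney manifold $\tilde M$ with the modulus controlled by the \emph{parabolic} distance $d_p$, then project onto $\R^n$ and claim the change from $d_p$ to the \emph{Euclidean spatial} distance $|x_0-x_1|$ ``costs only a fixed power and is absorbed.'' This is false. Two points $(x_0,t_0),(x_1,t_1)\in\Sigma$ with spatial distance $r=|x_0-x_1|$ can a priori have $|t_0-t_1|\sim r$ (the free boundary is only known to be Lipschitz as a graph $t=\Gamma(x)$), giving $d_p\sim\sqrt{r}$. Then $d_p^2\sigma(d_p)\sim r\,\sigma(\sqrt r)$, which is $O(r)$, not $O(r^2\gamma_m(r))$; for $m=n-1$ this would at best produce $C^{1,\alpha_0/2}$, not the claimed $C^{1,\alpha_0}$, and in the logarithmic case it degrades the exponent. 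You cannot close this gap by bootstrapping regularity of $\tilde M$, since that is circular. The paper's argument avoids this entirely: taking $(x_0,t_0)=(0,0)$, $(x,t)\in\Sigma$ with $t\le 0$, it uses the monotonicity $u_t\geq 0$ to get $0\le u(x+ry,0)-u(x+ry,t)=p_2(x+ry)-p_{2,x,t}(ry)+O(r^2\gamma_m(r))$ for $y\in B_3$ (each expansion is evaluated at the base time of its own singular point, so $|t|$ never enters the error). The difference $p_2(x+r\cdot)-p_{2,x,t}(r\cdot)$ is a \emph{harmonic} polynomial (both quadratics have unit trace), and since it vanishes somewhere on the segment joining $0$ to $-x/r$ while being $\geq-O(r^2\gamma_m(r))$, the elliptic Harnack inequality gives the $L^\infty$ bound $O(r^2\gamma_m(r))$ in terms of $r=|x-x_0|$ alone. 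This monotonicity-plus-Harnack step is the essential idea that your projection argument misses.
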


\begin{proof}
    We prove statements $(a)$ and $(b)$ together. Let us fix $t_0\in(-1/4,0]$ and define $S_{\lambda_m}^{t_0}:=\{x\in\Sigma^{t_0}:\lambda_*\ge\lambda_m\}$, where $\lambda_m=2$ if $ m\in\{1,\dots,n-2\}$ and $\lambda_m=2+\alpha_0$ if $ m=n-1$.
    
    By \cite[Lemma 7.4]{FRS24}, we have that the map 
    \begin{equation}\label{eq:upper:sc}
        \Sigma\ni (x_0,t_0)\to \phi\big(0^+,\xi\big(u(x_0+r\cdot,t_0+r^2\cdot)-p_{2,x_0,t_0}\big)\big)
    \end{equation}
    is upper semicontinuous. Consequently, the map
    $$\Sigma^{t_0}\ni x_0\to \phi\big(0^+,\xi\big(u(x_0+r\cdot,t_0+r^2\cdot)-p_{2,x_0,t_0}\big)\big)$$
    is also upper semicontinuous. This implies that $S^{t_0}_{\lambda_m}$ is closed, so $K:= S_{\lambda_m}^{t_0}\cap \overline{B_{1/4}} $ is compact. 
    Moreover, by Proposition \ref{pro:classification} we get $ \overline{\Sigma_m^{t_0}\cap B_{1/4}}\subset K$.
    
    Let us define
    \[
    \gamma_{m}(r):=\begin{cases}
        r^{\alpha_0} &  \text{if } m=n-1\\
        |\log r|^{-\varepsilon_0} &  \text{if } m\in\{1,\dots,n-2\}
    \end{cases}
    \]

    For $x_0\in K$, set 
    $$P_{x_0}(x,t):=p_{2,x_0,t_0}(x-x_0).$$
    We claim that $K,f\equiv0$ and $\{P_{x_0}\}_{x_0\in K}$ satisfy the assumptions of the  Whitney's extension Theorem (see \cite[Lemma 3.10]{FS19}), that is,\\
    $(i)$ $P_{x_0}(x_0)=0$,\\
    $(ii)$ there exists a constant $C>0$, depending only on $n$ and $\|u\|_{L^\infty}$, such that
    \begin{equation*}
        |   D^k P_{x_0} (x)  -   D^k P_{x} (x) |\le C |x-x_0|^{2-k}\gamma_m(|x-x_0|),
    \end{equation*}
    for all $x,x_0\in K$ and $k\in\{0,1,2\}$.
    
    The condition $(i)$ is trivially verified. 
    
    Next, given $x,x_0\in K$, set $|x-x_0|:=r\le 1/2$ and for simplicity of notation take $x_0=0$ and $t_0=0$. Noticing that $Q_1\subset Q_2(x/r)$, it follows that
    \begin{align*}
        &\hspace{-10mm}\|(P_{0}-P_{x})(r\cdot)\|_{L^2(Q_1)} \leq \\
        &\le 
        \|u(r\cdot,r^2\cdot)-P_{0}(r\cdot)\|_{L^2(Q_1)}+
        \|u(r\cdot,r^2\cdot)-P_{x}(r\cdot)\|_{L^2(Q_1)}\\
        &= \|u(r\cdot,r^2\cdot)-p_{2}(r\cdot)\|_{L^2(Q_1)}
        + \|u(r\cdot,r^2\cdot)-p_{2,x,0}(r\cdot-x)\|_{L^2(Q_1)}\\
        &\le \|u(r\cdot,r^2\cdot)-p_{2}(r\cdot)\|_{L^2(Q_1)}
        +\|u(r\cdot,r^2\cdot)-p_{2,x,0}(r\cdot-x)\|_{L^2(Q_2(x/r))}\\
        &\le \|u(r\cdot,r^2\cdot)-p_{2}(r\cdot)\|_{L^2(Q_1)}
        +  \|u(x+r\cdot,r^2\cdot)-p_{2,x,0}(r\cdot)\|_{L^2(Q_2)}\\
        &\le C r^2\gamma_m(r),
    \end{align*}
    where in the last inequality we have used \eqref{eq:l2:expansion}. Since the space of parabolic \mbox{2-homogeneous} polynomials is finite dimensional, the  norms $\|\cdot\|_{L^2(Q_1)}$ and $\|\cdot\|_{C^k(B_1)}$ are equivalent. Then, the property $(ii)$ is also satisfied and by applying the Whitney's extension Theorem (see \cite[Lemma 3.10]{FS19}) we get that there exists a function $F\in C^{2,\gamma_m}(\R^n)$ such that
    $$F(x)=P_{x_0}(x)+|x-x_0|^2\gamma_m(|x-x_0|),\quad\forall x,x_0\in K.$$
    In addition, we have that
    $$
    \Sigma_m^{t_0}\cap B_{1/4}\subset K \subset \{\nabla F=0\}.
    $$
    Hence, given $x_0\in \Sigma_m^{t_0}\cap B_{1/4}$, it follows that $\nabla F(x_0)=0$ and 
    $$\dim\ker(D^2 F(x_0))= \dim (\{p_{2,x_0,t_0}=0\})=m.$$
    Then, up to a change of coordinates, we have that $|D^2_{(x_1,\dots,x_{n-m})} F (x_0)|\not=0$. Hence, by applying the Implicit Function Theorem, $\bigcap_{i=1}^{n-m}\{\partial_{x_i}F=0\}$ is a $m$-dimensional manifold of class $C^{1,\gamma_m}$ which contains $\Sigma_m^{t_0}\cap B_{1/4}$, and this concludes the proof of statements $(a)$ and $(b)$.
    
    \medskip
    
    Next, we prove $(c)$ and $(d)$. 
    Let us define $S_{\lambda_m}=\{(x,t)\in\Sigma:\lambda_*\ge\lambda_m\}$. By the upper semicontinuity of \eqref{eq:upper:sc}, we have that $S_{\lambda_m}$ is closed, so $K:=\pi_x(\Sigma_{\lambda_m}\cap\overline{Q_{1/4}})$ is a compact set and by using Proposition \ref{pro:classification}, $ \pi_x(\Sigma_m \cap\overline{Q_{1/4}})\subset K$.

    For $x_0\in K$, there exists $t_0\in(-1/16,0]$ such that $(x_0,t_0)\in\Sigma$. Defining $$P_{x_0}:=p_{2,x_0,t_0}(x-x_0),$$  
    we claim that $K,f\equiv0$ and $\{P_{x_0}\}_{x_0\in K}$ satisfy the assumptions $(i)$ and $(ii)$ of the  Whitney's extension Theorem.
    
    The first condition is verified by definition.
    
    Given $x,x_0\in K$, set $r:=|x-x_0|\le 1/2$. By definition there exist $t,t_0$ such that $(x,t),(x_0,t_0)\in\Sigma$. Without loss of generality, assume that $(x_0,t_0)=(0,0)$ and $t\le0$. Set $c_r:= O(r^2\gamma_m(r))$. By using Theorem \ref{thm:expansion}, we get
    \begin{align}
        \begin{aligned}\label{eq:exp:1}
            &u(r\cdot,0)=p_2(r\cdot)+c_r,\quad\text{ in }B_3,\\ 
            &u(x+r\cdot,t)=p_{2,x,t}(r\cdot)+c_r,\quad\text{ in }B_3. 
        \end{aligned}
    \end{align}
    Noticing that $r=|x|$ and $|x+ry|\le 4r$, for every $y\in B_3$, \eqref{eq:exp:1} implies that
    \begin{align*}
        \begin{aligned}
            &u(x+ry,0)=p_2(x+ry)+O\left(|x+ry|^2 \gamma_m (|x+ry|)\right)  \\
            &= p_2(x+r y)+ c_r, \quad \text{ for }y\in B_3.
        \end{aligned}
    \end{align*}
    In addition, since $u_t\ge 0$ and $t\le0$, we have that
    \begin{align*}
        \begin{aligned}
            0\le u(x+ry,0)-u(x+ry,t)=p_2(x+ry)-p_{2,x,t}(ry)+c_r
        \end{aligned}
    \end{align*}
    for $y\in B_3$.
    
    Now, we observe that 
    \begin{align*}
        p_2(x)-p_{2,x,t}(0)\ge 0,\\
        p_2(0)-p_{2,x,t}(-x)\le 0,
    \end{align*}
    so there exists a point $\bar{x}$ belongs to the segment connecting $0$ and $-x/r\in B_1$ such that $p_2(x+r\bar{x})-p_{2,x,t}(r\bar{x})= 0$.

    Since
    $$\Delta (p_2(x+r\cdot )-p_{2,x,t}(r\cdot))=0,$$
    by applying the Harnack inequality to $p_2(x+r\cdot)-p_{2,x,t}(r\cdot)+c_r$ (which is nonnegative), we obtain
    \begin{align*}
        \| p_2(x+r\cdot)-p_{2,x,t}(r\cdot)+c_r \|_{L^\infty(B_2)}   \le C \inf_{B_{3}}\left( p_2(x+r\cdot)-p_{2,x,t}(r\cdot)+c_r \right)\le Cc_r,
    \end{align*}
    which implies
    \begin{align*}
        &\| P_0(r\cdot)-P_{x}(r\cdot)\|_{L^\infty(B_{1})}\le  \| P_0(r\cdot)-P_{x}(r\cdot)\|_{L^\infty(B_{2}(-x/r))}\\
        &=\| P_0(x+r\cdot)-P_{x}(x+r\cdot)\|_{L^\infty(B_2)} \le (C+1)c_r.
    \end{align*}
    Hence, the assumptions of the Whitney's extension theorem are satisfied, allowing us to conclude as for statements 
    $(a)$ and $(b)$.    
\end{proof}

\section{A parabolic almost positivity property}\label{sec:app}

The goal of this section is to prove an \textit{almost positivity property} for solutions to the heat equation in our domains of interest. 

\begin{defn}\label{defn:D_eta}
    Let $\eta > 0$ and $m \in \{0,\ldots,n-2\}$, and recall that for $x \in \R^n$, we write $\bar x = (x_1,\ldots,x_m)$, $\bar y = (x_{m+1},\ldots,x_n)$. Then, we denote
    $$D_{\eta,m} := \left\{|\bar y| > \eta|t|^{1/2}\right\}\cap\left\{|\bar y| > \eta|\bar x|\right\}.$$
   Notice that when $m=0$ we simply have $\bar y=x$ and hence $D_{\eta,0} := \left\{|x| > \eta|t|^{1/2}\right\}$.
\end{defn}

\begin{figure}[H]
	\centering
	\includegraphics{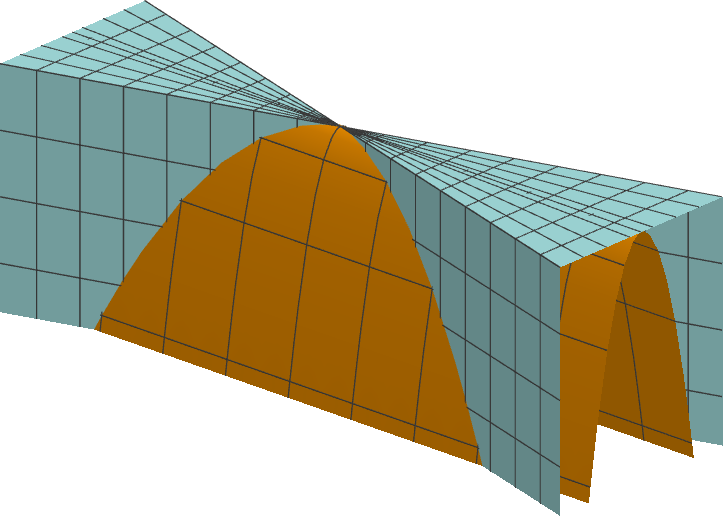}
	\caption{The complement of $D_{\eta,1}$ with $n = 2$. The orange surface is $\{|y| = \eta|t|^{1/2}\}$, and the blue one is $\{|y| = \eta|x|\}\cup\{|y|\leq\eta|x|,t=0\}$.}
\end{figure}

The main result of this Section is the following.

\begin{prop}\label{cor:almost_positivity}
    Let $\eta \in (0,\frac{1}{4})$, and $D_{\eta,m}$ as in Definition \ref{defn:D_eta}. There is a constant $\nu>0$ such that the following holds.
    
    Let $u$ satisfy
    $$\left\{\begin{array}{rclll}
        u_t - \Delta u & = & 0 & \text{in} & D_{\eta,m}\cap Q_1\\
        u & \geq & \!-\nu & \text{in} & D_{\eta,m}\cap Q_1\\[0.1cm]
        u & \geq & 1 & \text{on} & \left\{|\bar y| = \frac{1}{2}\right\}\cap Q_1\\[0.1cm]
        u & \geq & 0 & \text{on} & \p D_{\eta,m}\cap Q_1.
    \end{array}\right.$$
    Then,
    $$u \geq 0 \quad \text{in} \quad D_{\eta,m}\cap Q_{1/2}.$$
    The constant $\nu$  depends only on $n$ (and not on $\eta$).
\end{prop}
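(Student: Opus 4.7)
The plan is to combine a reformulation in terms of caloric measure with an explicit barrier construction that exploits the parabolic scaling of $D_{\eta,m}$.

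First, work in the sub-domain $\Omega := D_{\eta,m} \cap Q_1 \cap \{|\bar y| < 1/2\}$, whose parabolic boundary decomposes as $A \cup B \cup C$, with $A := \{|\bar y|=1/2\} \cap Q_1$ (where $u \geq 1$), $B := \partial D_{\eta,m} \cap \overline\Omega$ (where $u \geq 0$), and $C := \partial_p Q_1 \cap \overline\Omega$ (where only $u \geq -\nu$ is known). The Poisson representation for the heat equation then yields
\begin{equation*}
    u(x_0,t_0) \;\geq\; \omega^{(x_0,t_0)}(A)\cdot 1 + \omega^{(x_0,t_0)}(B)\cdot 0 + \omega^{(x_0,t_0)}(C)\cdot(-\nu) \;\geq\; \omega^{(x_0,t_0)}(A) - \nu
\end{equation*}
for every $(x_0,t_0) \in \Omega$, where $\omega^{(x_0,t_0)}$ denotes caloric measure. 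It therefore suffices to produce a constant $c_0 = c_0(n) > 0$ such that $\omega^{(x_0,t_0)}(A) \geq c_0$ uniformly in $\eta \in (0, 1/4)$ and in $(x_0,t_0) \in D_{\eta,m} \cap Q_{1/2}$, and then set $\nu := c_0/2$. The complementary region $\{|\bar y| > 1/2\} \cap D_{\eta,m} \cap Q_{1/2}$ is easier because $\partial D_{\eta,m} \cap \{|\bar y|>1/2\} \cap Q_1 = \emptyset$ when $\eta < 1/4$, so standard interior Harnack (Theorem \ref{thm:interior_Harnack}) and Lemma \ref{lem:cylinder_measure} apply there with no modification.

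To obtain the uniform lower bound, I would exhibit a caloric subsolution $\Psi$ in $\Omega$ with $\Psi \leq \chi_A$ on $\partial_p\Omega$ and $\Psi \geq c_0$ on $D_{\eta,m} \cap Q_{1/2}$; the comparison principle would then yield $\omega^{(x_0,t_0)}(A) \geq \Psi(x_0,t_0) \geq c_0$. Since $D_{\eta,m}$ is invariant under parabolic scaling $(x,t) \mapsto (\lambda x, \lambda^2 t)$, natural candidates come from the parabolically $0$-homogeneous caloric functions $\Psi(\bar y,t) = f(|\bar y|/\sqrt{-t})$, where $f$ solves the ODE
\begin{equation*}
    f''(r) + \tfrac{n-m-1}{r}\,f'(r) - \tfrac{r}{2}\,f'(r) = 0,
\end{equation*}
with integration constants chosen so that $f(\eta) = 0$: this makes $\Psi$ vanish on the parabolic cone $\{|\bar y| = \eta\sqrt{-t}\}$. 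For $m \geq 1$, the other piece of $\partial D_{\eta,m}$, namely the spatial cone $\{|\bar y| = \eta|\bar x|\}$, must also be handled, which is done by a suitable product or difference with a second self-similar factor depending on $\bar x$ and $t$.

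The main obstacle is ensuring that the lower bound $\Psi \geq c_0$ on $D_{\eta,m} \cap Q_{1/2}$ is \emph{uniform as $\eta \to 0$}. The key mechanism is the assumption $m \leq n-2$, i.e.\ $n - m \geq 2$: the axis $\{\bar y = 0\}$ has parabolic codimension at least $2$, hence is essentially removable for caloric functions. Concretely, the ODE has explicit solutions $f'(r) = C\,r^{-(n-m-1)}e^{r^2/4}$, and after normalizing by $f(1)=1$ one verifies that $f(r)$ converges pointwise to a positive finite limit as $\eta \to 0$ on every compact subset of $(0,\infty)$, giving a constant $c_0 = c_0(n)$ independent of $\eta$. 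The additional technicalities --- the construction of the auxiliary factor in $\bar x$ for $m \geq 1$, and a separate control of the "top" face $t$ close to $0$ where the self-similar ansatz degenerates --- do not affect the dimensional nature of $\nu$, yielding the claim.
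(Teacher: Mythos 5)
Your reduction to a lower bound on the caloric measure of the face $A=\{|\bar y|=\tfrac12\}\cap Q_1$ cannot work as written, because the bound you ask for is false. Writing $u(x_0,t_0)\geq\omega^{(x_0,t_0)}(A)-\nu$ and then requiring $\omega^{(x_0,t_0)}(A)\geq c_0>0$ \emph{uniformly} for $(x_0,t_0)\in D_{\eta,m}\cap Q_{1/2}$ ignores that $D_{\eta,m}\cap Q_{1/2}$ extends all the way up to the lateral boundary $\p D_{\eta,m}$, where $\omega^{(x_0,t_0)}(A)\to 0$ as $(x_0,t_0)$ approaches $\p D_{\eta,m}$ (boundary data for $\omega(A)$ vanishes there). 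The same obstruction kills the proposed barrier: a caloric $\Psi$ satisfying $\Psi\leq\chi_A$ on $\p_p\Omega$ must vanish on $\p D_{\eta,m}$, so it cannot be $\geq c_0>0$ up to that boundary; moreover your self-similar ansatz $f(|\bar y|/\sqrt{-t})$ is strictly positive on the "far" portion $C=\p_p Q_1\cap\Omega$, where $\chi_A=0$, so the comparison $\Psi\leq\chi_A$ fails there as well.

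What is actually needed near $\p D_{\eta,m}$ is not $\omega(A)\geq c_0$ but a comparison of the form $\omega^{(x_0,t_0)}(C)\leq\nu^{-1}\,\omega^{(x_0,t_0)}(A)$: both caloric measures degenerate at the same rate at the boundary, and it is their \emph{ratio} that must be controlled. That is a boundary-Harnack type statement, not something a single barrier delivers. This is precisely why the paper does not use one-shot comparison: it runs the De Silva--Savin iteration (Lemma \ref{lem:almost_positivity_iteration} and Proposition \ref{prop:almost_positivity_bulk}), improving the lower bound from $-\mu$ to $-\mu^2$ at half scale and propagating positivity inward via Harnack chains, then checking in Lemma \ref{lem:D_eta_accessible} that $D_{\eta,m}$ has the two geometric properties (interior Harnack chains with bounded length, and a fixed fraction of $\p_pQ_{2\delta}$ landing in $\Omega_\delta\cup\Omega^c$) that make the iteration run with constants independent of $\eta$. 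Your observation that $n-m\geq 2$ makes $\{\bar y=0\}$ parabolically small is correct and is indeed the reason constants survive the limit $\eta\to0$ (the paper exploits this in Proposition \ref{prop:cone_solns} for its self-similar barriers, used later for growth rates rather than for positivity), but by itself it does not fix the boundary-Harnack gap above.
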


Our approach to establish this result is in the spirit of De~Silva and Savin in their proofs of boundary Harnack inequalities \cite{DS20,DS22}.

Actually, in order to provide a result that can be later reused in different settings, we distill the conditions used in the proof in the following characterization.

\begin{defn}\label{defn:boundary_accessible}
    Let $\Omega\subset \R^n\times\R$ be an open set where the Dirichlet problem for the heat equation is well posed. Denote $\Omega_r = \{x \in \Omega : \operatorname{d_p}(x,\p\Omega) \geq r\}$, where $\operatorname{d_p}$ is the parabolic distance \eqref{parabolic-d}. 
    We say $\Omega$ is \textit{parabolically accessible} if there exist $\delta_0, c_0 > 0$, and $N \in \N$, such that, for every $\delta \in (0,\delta_0)$,
    \begin{itemize}
        \item[(i)] For every $(x_0,t_0) \in \Omega_{\delta/2}\cap Q_{1-\delta}$, there exist $(x_i,t_i) \in \Omega$, with $i = 1,\ldots,N$, such that $(x_N,t_N) \in \Omega_\delta$, and for all $i = 0,\ldots,N$, $t_{i+1} \leq t_i$,
        $$Q_{2r_i}(x_i,t_i) \subset \Omega, \quad \text{and} \quad |x_i-x_{i+1}| \leq r_i,$$
        where $r_i = \sqrt{t_{i+1} - t_i}$.
        \item[(ii)] For every $(x_0,t_0) \in \Omega\cap Q_{1-2\delta}$,
        $$\big|\p_pQ_{2\delta}(x_0,t_0) \cap (\Omega_\delta\cup\Omega^c)\big| \geq c_0|\p_pQ_{2\delta}|.$$
    \end{itemize}
\end{defn}

We start with a quantitative \textit{propagation of positivity} property.

\begin{lem}\label{lem:almost_positivity_iteration}
    Let $\Omega$ be parabolically accessible in the sense of Definition \ref{defn:boundary_accessible}.
    Then, there exist $\mu,\delta>0$ such that the following holds.
    
    Let $u$ satisfy
    $$\left\{\begin{array}{rclll}
        u_t - \Delta u & = & 0 & \text{in} & \Omega\cap Q_1\\
        u & \geq & \!-\mu & \text{in} & \Omega\cap Q_1\\
        u & \geq & 1 & \text{in} & \Omega_\delta\cap Q_1\\
        u & \geq & 0 & \text{on} & \p\Omega\cap Q_1.
    \end{array}\right.$$
Then,
    $$\left\{\begin{array}{rclll}
        u & \geq & \!-\mu^2 & \text{in} & \Omega\cap Q_{1/2}\\
        u & \geq & \mu & \text{in} & \Omega_{\delta/2}\cap Q_{1/2}.
    \end{array}\right.$$
    The constants $\mu$ and $\delta$ depend only on the constants in Definition \ref{defn:boundary_accessible} and $n$.
\end{lem}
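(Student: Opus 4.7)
The plan is to handle the two conclusions by different mechanisms: condition (i) of Definition \ref{defn:boundary_accessible} drives the positivity propagation, while condition (ii) together with an iterated ``measure-based'' improvement handles the lower bound on the negative part. For the bound $u \geq \mu$ in $\Omega_{\delta/2} \cap Q_{1/2}$, I would invoke condition (i): given $(x_0, t_0) \in \Omega_{\delta/2} \cap Q_{1-\delta}$, take the chain of at most $N$ parabolic cylinders $Q_{2r_i}(x_i,t_i) \subset \Omega$ joining $(x_0, t_0)$ backward in time to $(x_N,t_N) \in \Omega_\delta$, where $u \geq 1$ by hypothesis. Since $u + \mu \geq 0$ is caloric in each $Q_{2r_i}(x_i,t_i)$ and consecutive chain points lie in the ``Harnack position'' $|x_{i+1}-x_i|\le r_i$, $t_{i+1}=t_i - r_i^2$, successive applications of Theorem \ref{thm:interior_Harnack} yield $u(x_0, t_0) + \mu \geq C^{-N}$ with $C$ depending only on dimension. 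Fixing $\mu \leq \tfrac{1}{2} C^{-N}$ then delivers $u(x_0, t_0) \geq \mu$.

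For the bound $u \geq -\mu^2$ in $\Omega \cap Q_{1/2}$, I would work with the negative part $v := \max(-u, 0)$, which is a heat-subsolution in $\Omega$ satisfying $v \leq \mu$, $v = 0$ on $\partial \Omega$, and $v \equiv 0$ in $\Omega_\delta$. Extending $v$ by zero to $\bar v$ on $Q_1$, the resulting function is continuous (thanks to $u \geq 0$ on $\partial \Omega$) and remains a viscosity subsolution across $\partial \Omega$: any smooth test function touching $\bar v$ from above at a boundary point is nonnegative locally with value $0$ at the touching point, hence has a local minimum there, forcing $\varphi_t - \Delta \varphi \leq 0$ automatically.

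The heart of the argument is a one-step multiplicative improvement. For $(x_0, t_0) \in \Omega \cap Q_{1-2\delta}$, compare $\bar v$ in $Q_{2\delta}(x_0, t_0)$ with the caloric function $w$ having the same data on $\p_p Q_{2\delta}$: then $\bar v \leq w$, $0 \leq w \leq \mu$, and $w \equiv 0$ on $\p_p Q_{2\delta} \cap (\Omega_\delta \cup \Omega^c)$, which has relative measure at least $c_0$ by condition (ii). Rescaling and applying Lemma \ref{lem:cylinder_measure} to $(\mu - w)/\mu$ yields $w(x_0,t_0) \leq (1 - \theta) \mu$ for some $\theta = \theta(c_0, n) > 0$. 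Iterating this improvement $k$ times on cylinders of size $2\delta$ gives $\bar v \leq (1-\theta)^k \mu$ in $\Omega \cap Q_{1 - 2k\delta}$. Taking $k = \lfloor 1/(4\delta) \rfloor$ places us inside $\Omega \cap Q_{1/2}$, and choosing $\mu$ and $\delta$ so that $(1-\theta)^{1/(4\delta)} \leq \mu$ (equivalently $\delta \leq |\log(1-\theta)|/(4|\log \mu|)$, compatible with $\delta < \delta_0$ once $\mu$ is sufficiently small) delivers $\bar v \leq \mu^2$, and hence $u \geq -\mu^2$ in $\Omega \cap Q_{1/2}$.

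The main obstacle is quantitative: the one-step argument gains only a fixed factor $(1-\theta)$, whereas the conclusion demands the exponentially stronger factor $\mu$. Closing this gap requires on the order of $\log(1/\mu)$ iterations, each consuming a strip of width $2\delta$ of the domain; reconciling these iterations with the threshold $\delta_0$ from Definition \ref{defn:boundary_accessible} is what pins down the universal constants $\mu$ and $\delta$. A secondary technical point is the subsolution property of $\bar v$ across $\partial \Omega$, which relies essentially on $u \geq 0$ there to ensure the extension is continuous and therefore compatible with the comparison principle used in the one-step improvement.
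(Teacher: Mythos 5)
Your proof is correct and follows the same approach as the paper: the Harnack chain from condition (i) gives the positive lower bound on $\Omega_{\delta/2}$, and the iterated measure-based improvement via Lemma \ref{lem:cylinder_measure} and condition (ii) gives the $-\mu^2$ bound. The only cosmetic difference is that you extend $u^-$ by zero and work with the resulting subsolution $\bar v$, whereas the paper skips the extension and directly compares $u$ with the caloric function in $Q_{2\delta}(x_0,t_0)$ whose boundary data is $-u^-$; these are equivalent.
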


\begin{proof}		
    First, let $\mu \in (0,1)$ to be chosen later. 
    We will prove that for small enough $\delta \in (0,\delta_0)$, we have $u \geq -\mu^2$ in $\Omega\cap Q_{1/2}$.
    
    For this, let $(x_0,t_0) \in \Omega\cap Q_{1-2\delta}$. Let $v$ be the solution to
    $$\left\{\begin{array}{rclll}
        v_t - \Delta v & = & 0 & \text{in} & Q_{2\delta}(x_0,t_0)\\
        v & = & \!-u^- & \text{on} & \p_p Q_{2\delta}(x_0,t_0).
    \end{array}\right.$$
    Then, $u \geq v$ in $Q_{2\delta}(x_0,t_0)$, and in particular at $(x_0,t_0)$.
    
    By construction, $v \geq -\mu$ on $\p_p Q_{2\delta}(x_0,t_0)$. Moreover, condition (ii) in Definition~\ref{defn:boundary_accessible} implies that
    $$\left|\p_pQ_{2\delta}(x_0,t_0) \cap (\Omega_\delta\cup\Omega^c)\right| \geq c_0|\p_pQ_{2\delta}|,$$
    and hence
    $$\left|\{v \geq 0\}\cap\p_pQ_{2\delta}(x_0,t_0)\right| \geq c_0|\p_pQ_{2\delta}|,$$
    and by Lemma \ref{lem:cylinder_measure}, $v(x_0,t_0) \geq -(1-\theta)\mu$, with $\theta \in (0,1)$.
    
    Repeating this argument, we obtain that $u \geq -(1-\theta)^k\mu$ in $Q_{1-2k\delta}$, and then choosing $k$ such that $(1-\theta)^k \leq \mu$ and $\delta$ such that $k\delta \leq \frac{1}{4}$, the conclusion follows.
    
    Now we use a parabolic Harnack chain to prove the second inequality. Let $(x_0,t_0) \in \Omega_{\delta/2}\cap Q_{1/2}$. Then, there exist $(x_i,t_i) \in \Omega$, with $i = 1,\ldots,N$, such that
    $$Q_{2\sqrt{t_i-t_{i+1}}}(x_i,t_i) \subset \Omega \quad \text{and} \quad |x_i-x_{i+1}| \leq \sqrt{t_i-t_{i+1}}.$$
    Then, by the parabolic Harnack inequality (Theorem \ref{thm:interior_Harnack}), applied to $u+\mu$,
    $$u(x_i,t_i)+\mu \geq c\big(u(x_{i+1},t_{i+1})+\mu\big),$$
    with a uniform dimensional constant, and it follows that 
    $$u(x_0,t_0)+\mu \geq c^N\big(u(x_N,t_N)+\mu\big) \geq c^N(1+\mu),$$
    because $(x_N,t_N) \in \Omega_\delta$. Choosing $\mu = c^N/2$ completes the proof.
\end{proof}

Then, we iterate the previous lemma.  

\begin{prop}\label{prop:almost_positivity_bulk}
    Let $\Omega$ be parabolically accessible in the sense of Definition \ref{defn:boundary_accessible}.
    Then, there exist $\mu,\delta>0$ such that the following holds.
    
    Let $u$ satisfy
    $$\left\{\begin{array}{rclll}
        u_t - \Delta u & = & 0 & \text{in} & \Omega\cap Q_1\\
        u & \geq & \!-\mu & \text{in} & \Omega\cap Q_1\\
        u & \geq & 1 & \text{in} & \Omega_\delta\cap Q_1\\
        u & \geq & 0 & \text{on} & \p\Omega\cap Q_1.
    \end{array}\right.$$
    Then,
    $$u \geq 0 \quad \text{in} \quad \Omega\cap Q_{1/2}.$$ 
    
    The constants $\mu$ and $\delta$ depend only on the constants in Definition \ref{defn:boundary_accessible} and $n$.
\end{prop}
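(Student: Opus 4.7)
The plan is to iterate Lemma \ref{lem:almost_positivity_iteration} in order to upgrade the bound $u\geq -\mu$ to $u\geq 0$. The key observation is that a single application of the lemma squares the negative part of the bound: from $-\mu$ to $-\mu^2$. Repeating this indefinitely drives the lower bound to zero.

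First I would apply Lemma \ref{lem:almost_positivity_iteration} directly to $u$, obtaining
\begin{equation*}
    u\geq -\mu^2 \text{ on } \Omega\cap Q_{1/2}, \qquad u\geq \mu \text{ on } \Omega_{\delta/2}\cap Q_{1/2}.
\end{equation*}
The second estimate is the crucial ``bootstrapping'' ingredient. Indeed, consider the renormalized function $v := u/\mu$. Then $v$ solves the heat equation and satisfies $v\geq -\mu$ on $\Omega\cap Q_{1/2}$, $v\geq 1$ on $\Omega_{\delta/2}\cap Q_{1/2}$, and $v\geq 0$ on $\p\Omega\cap Q_{1/2}$. In particular, since $\Omega_\delta\subset\Omega_{\delta/2}$, the function $v$ satisfies the hypotheses of Lemma \ref{lem:almost_positivity_iteration} on $Q_{1/2}$ with the same constant $\delta$.

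Next I would rescale parabolically, letting $\tilde v(x,t) = v(x/2,t/4)$ on $Q_1$. This turns $Q_{1/2}$ into $Q_1$, and since the parabolic distance scales linearly, the set $\Omega_{\delta/2}$ is rescaled into $(2\Omega)_\delta$. Because the parabolic accessibility of Definition \ref{defn:boundary_accessible} is preserved under the parabolic scaling (the cone-like domains of interest, such as $D_{\eta,m}$, are genuinely scale-invariant), $\tilde v$ satisfies the hypotheses of Lemma \ref{lem:almost_positivity_iteration} at scale $1$. Applying the lemma once more yields another quadratic improvement, which translates back to $u\geq -\mu^3$ on a smaller cylinder together with an improved positive lower bound on an inner subset. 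Iterating this scheme $k$ times produces $u\geq -\mu^{k+1}$ on a nested sequence of cylinders, which sends the negative part to zero as $k\to\infty$.

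The main obstacle is the geometric bookkeeping: each iteration nominally shrinks the cylinder, so a naive iteration would only yield the bound at the origin. To conclude $u\geq 0$ on the full $\Omega\cap Q_{1/2}$, I would either (i) run the iteration in parallel centered at each point $(x_0,t_0)\in\Omega\cap Q_{1/2}$, using the interior Harnack chains guaranteed by parabolic accessibility to transport the positive improvement of $u$ on $\Omega_{\delta/2}$ to the point $(x_0,t_0)$; or (ii) combine the rescaling with a standard covering argument so that the union of cylinders on which the successive bounds $-\mu^{k+1}$ hold exhausts $\Omega\cap Q_{1/2}$. Either way, letting $k\to\infty$ gives $u\geq 0$ on $\Omega\cap Q_{1/2}$.
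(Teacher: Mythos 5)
Your overall strategy — iterate Lemma \ref{lem:almost_positivity_iteration} to square the negative bound repeatedly, renormalizing by $\mu$ and rescaling by a factor $2$ at each step — is exactly the paper's, and the renormalization $v=u/\mu$ together with the observation that $\Omega_\delta\subset\Omega_{\delta/2}$ is the right bookkeeping. The iteration correctly produces $u\geq\mu^k$ in $\Omega_{2^{-k}\delta}\cap Q_{2^{-k}}$ (and $u\geq-\mu^{k+1}$ in $\Omega\cap Q_{2^{-k}}$). Where the argument has a genuine gap is the passage from these nested estimates to the claim $u\geq 0$ on all of $\Omega\cap Q_{1/2}$, which you correctly flag as the ``main obstacle'' but do not resolve.

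Neither of your two proposed fixes works as stated. Option~(i), transporting positivity by Harnack chains from $\Omega_{\delta/2}$ to an arbitrary point $(x_0,t_0)\in\Omega\cap Q_{1/2}$, fails because the chain length $N$ (and hence the constant $c^N$) degenerates as $(x_0,t_0)\to\p\Omega$, so at best you recover $u\geq -\mu + c^N(\mathrm{positive})$, which gives nothing near the boundary; this is precisely why the structured iteration of Lemma \ref{lem:almost_positivity_iteration} is needed rather than a raw Harnack chain. Option~(ii), as you phrase it, is circular: the union of the cylinders $Q_{2^{-k}}$ trivially equals $Q_{1/2}$, but the bound $-\mu^{k+1}$ only holds on the $k$-th cylinder, so taking the union yields nothing better than $u\geq-\mu^2$ on $Q_{1/2}$; the bounds converge to $0$ only at the origin. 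The missing step is what the paper actually does: keep track of the \emph{positive} lower bound $u\geq\mu^k$ on the growing sets $\Omega_{2^{-k}\delta}\cap Q_{2^{-k}}$, observe that their union covers a fixed cone-like region touching the origin, and then re-center the whole iteration (with $\delta$ replaced by $\delta/2$ and a parabolic rescaling by $2$) at every point $(x_0,t_0)\in\p\Omega\cap Q_{1/2}$, so that the union of the resulting cone-like neighborhoods, together with $\Omega_\delta\cap Q_{1/2}$, covers $\Omega\cap Q_{1/2}$. That re-centering over all boundary points is the concrete ``covering argument,'' and it is the piece your sketch lacks.
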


\begin{proof}
    First, iterating Lemma \ref{lem:almost_positivity_iteration} gives $u \geq \mu^k$ in $\Omega_{2^{-k}\delta}\cap Q_{2^{-k}}$ for all $k\geq1$.
    In particular, since 
    $$\cup_{k\geq1}(\Omega_{2^{-k}\delta}\cap Q_{2^{-k}}) \supset \{(x,0): x\in B_{1-\delta}, \, {\rm dist}(x,\partial\Omega\cap\{t=0\})>2\delta|x|\},$$
    we deduce $u(\cdot,0) \geq 0$ in 
    $$\{(x,0): x\in B_{1-\delta}, \, {\rm dist}(x,\partial\Omega\cap\{t=0\})>2\delta|x|\}.$$
    
    Now (taking $\delta/2$ instead of $\delta$), we can repeat the argument for $$u\left(x_0+\frac{x}{2},t_0+\frac{t}{4}\right),$$
    for all $(x_0,t_0) \in \p\Omega\cap Q_{1/2}$, and therefore we find $u \geq 0$ in $\Omega\cap Q_{1/2}$.
\end{proof}

Then we check that our sets of interest are parabolically accessible.

\begin{lem}\label{lem:D_eta_accessible}
    Let $\eta \in (0,\frac{1}{4})$ and $D_{\eta,m}$ as in Definition \ref{defn:D_eta}.
    
    Then, $D_{\eta,m}$ is parabolically accesible in the sense of Definition \ref{defn:boundary_accessible}, with constants depending only on the dimension (and not on $\eta$).
\end{lem}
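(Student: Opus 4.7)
The plan is to verify conditions (i) and (ii) of Definition~\ref{defn:boundary_accessible} directly from the explicit geometry of $D_{\eta,m}$ and its invariance under the parabolic dilations $(x,t)\mapsto(\lambda x,\lambda^2 t)$. A useful preliminary computation is that the parabolic distance from $(x,t)\in D_{\eta,m}$ to $\partial D_{\eta,m}$ equals
\[
\rho(x,t)\;=\;\min\!\Bigl(\tfrac{|\bar y|-\eta|\bar x|}{\sqrt{1+\eta^2}},\; |\bar y|-\eta|t|^{1/2}\Bigr),
\]
since the cone $\{|\bar y|=\eta|\bar x|\}$ is $t$-independent and the paraboloid $\{|\bar y|=\eta|t|^{1/2}\}$ is $\bar x$-independent, so each minimum is attained by an elementary calculation. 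Consequently, for $\eta\in(0,\tfrac14)$ both ``proxies'' $|\bar y|-\eta|\bar x|$ and $|\bar y|-\eta|t|^{1/2}$ are comparable to $\rho(x,t)$ with universal constants, and both are $2$-Lipschitz in the parabolic metric, because $\abs{|t|^{1/2}-|s|^{1/2}}\le|t-s|^{1/2}$.

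For condition (i) I build the Harnack chain greedily by translating in the $\bar y$-direction. Given $(x_i,t_i)$ with $\rho_i=\rho(x_i,t_i)\ge\delta/2$, I set $r_i=c\rho_i$ for a small dimensional constant $c\in(0,\tfrac14)$ and take
\[
x_{i+1}:=x_i+r_i\bigl(0,\,\bar y_i/|\bar y_i|\bigr)\in\R^m\times\R^{n-m},\qquad t_{i+1}:=t_i-r_i^2.
\]
Since $r_i\le\rho_i/2$, the cylinder $Q_{2r_i}(x_i,t_i)$ is contained in $D_{\eta,m}$. The cone proxy grows by exactly $r_i$, and a short case split on $|t_i|\gtrless r_i^2$ shows the paraboloid proxy grows by at least $r_i/2$; hence $\rho_{i+1}\ge(1+c/4)\rho_i$, and $N:=\lceil\log 2/\log(1+c/4)\rceil$ iterations (a constant independent of $\delta$, $\eta$, and the starting point) suffice to reach $\rho_N\ge\delta$. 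The total displacement along the chain is $O(N\delta)$, so it remains inside $Q_1$.

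For condition (ii) I reduce to a universal scale using the parabolic scaling invariance. If $\rho(x_0,t_0)\ge 3\delta$ then $Q_{2\delta}(x_0,t_0)\subset\Omega_\delta$ and there is nothing to prove; otherwise I rescale by $\delta^{-1}$ and assume $\delta=1$, $\rho(x_0,t_0)<3$. The task then becomes to show that a universal fraction of $\p_pQ_2(x_0,t_0)$ lies in $\Omega_1\cup\Omega^c$. For this I produce an explicit ``good'' subset: translating inward along the outward normal to the closer boundary piece by a unit amount produces a half-space-like region lying in $\Omega_1$, while the opposite half gives a region in $\Omega^c$; because both proxies have spatial gradient of norm in $[1,\sqrt{1+\eta^2}]\subset[1,\sqrt{2}]$, this sector has $n$-dimensional measure on $\p_pQ_2(x_0,t_0)$ bounded below by a dimensional constant. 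The main obstacle I foresee lies exactly here, at points $(x_0,t_0)$ near the apex of $D_{\eta,m}$ where the cone and paraboloid meet and both proxies degenerate along $\{\bar y=0\}$; the scale invariance ensures the configuration is bounded, and a short case analysis (on whether the cone, the paraboloid, or both is the closer piece of $\partial D_{\eta,m}$) then delivers the desired uniform density bound.
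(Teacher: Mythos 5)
Your overall strategy matches the paper's: estimate the parabolic distance via the two explicit ``proxies,'' build a Harnack chain by translating in the $\bar y$-direction, and produce an explicit sector of $\p_pQ_{2\delta}(x_0,t_0)$ sitting in $\Omega_\delta\cup\Omega^c$. Two technical points deserve correction, though.

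First, your claimed \emph{equality} $\rho(x,t)=\min\bigl(\tfrac{|\bar y|-\eta|\bar x|}{\sqrt{1+\eta^2}},\ |\bar y|-\eta|t|^{1/2}\bigr)$ is false for the paraboloid piece: the map $s\mapsto(|\bar y_0|-\eta\sqrt{s})^2+|s-|t_0||$ is strictly decreasing near $s=|t_0|$ (its derivative tends to $-\infty$), so the nearest point on $\{|\bar y|=\eta|t|^{1/2}\}$ is \emph{not} at time $t_0$, and the actual distance is strictly smaller than $|\bar y_0|-\eta|t_0|^{1/2}$. What does hold is the two-sided comparability (the paper's Step 1), which is what you actually use afterward; but you should state it as comparability, not equality.

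Second, and more substantively, your multiplicative iteration for condition (i) does not close as stated. You set $r_i=c\rho_i$, show both proxies increase by at least $(1-\eta)r_i$, and claim $\rho_{i+1}\ge(1+c/4)\rho_i$. But converting from proxies back to $\rho$ costs the comparability factor $1+\eta$, so the best immediate bound is
\[
\rho_{i+1}\ \ge\ \frac{\min_{i+1}}{1+\eta}\ \ge\ \frac{\rho_i\bigl(1+(1-\eta)c\bigr)}{1+\eta},
\]
which is $\ge(1+c/4)\rho_i$ only when $\eta\lesssim c$, not for all $\eta<\tfrac14$ with a fixed small $c<\tfrac14$. The iteration can be rescued by tracking the proxy $\min_i$ itself (which grows multiplicatively without loss) and converting to $\rho$ only once at the final step, but as written the step is a genuine gap. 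The paper sidesteps this by using fixed-size increments $r_i=\delta/4$ with $N=4$ steps, so no multiplicative bookkeeping is needed.

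For condition (ii), your rescaling to $\delta=1$ is a valid reduction, but the ``half-space-like region'' near the apex is exactly where hand-waving is dangerous, as you acknowledge. The paper's choice of $E$ on the bottom face $\{t=t_0-4\delta^2\}$, translated far enough in $\bar y^{(1)}$, handles all cases uniformly (including the apex) with one inequality and is worth adopting to make the argument airtight.
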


\begin{proof}
    We will verify Definition \ref{defn:boundary_accessible} with $N=4$, $\delta_0 = \frac{1}{2}$, and a dimensional $c_0 > 0$ to be chosen later.

    \noindent\textit{Step 1.} We start with a geometric observation: for any $p = (\bar x_0,\bar y_0, t_0) \in D_{\eta,m}$,
    $$\operatorname{d_p}(p,\p D_{\eta,m}) \leq \min\{|\bar y_0|-\eta|t_0|^{1/2},|\bar y_0|-\eta|\bar x_0|\} \leq (1+\eta)\operatorname{d_p}(p,\p D_{\eta,m}).$$
    Indeed, the first inequality follows from $\operatorname{d_p}$ being a distance. For the second one, first note that since $p \in D_{\eta,m}$,
    $$\operatorname{d_p}(p,\p D_{\eta,m}) = \min\left\{\operatorname{d_p}(p,\{|\bar y| = \eta|t|^{1/2}\}),\operatorname{d_p}(p,\{|\bar y| = \eta|\bar x|\})\right\}.$$
    Now, let $s := \dfrac{|\bar y_0| - \eta|t_0|^{1/2}}{1+\eta}$. Then,
    \begin{align*}
        \operatorname{d_p}(p,\{|\bar y| = \eta|t|^{1/2}\}) &\geq \inf\left\{|\bar y_0| - \eta|t|^{1/2}, \ \text{for} \ t \in \left[t_0 - s^2,\ t_0 + s^2\right]\right\}\\
        &=|\bar y_0|-\eta\sqrt{|t_0 - s^2|}\\
        &\geq |\bar y_0|-\eta\sqrt{|t_0|}-\eta s = \frac{|\bar y_0|-\eta|t_0|^{1/2}}{1+\eta}.
    \end{align*}
    On the other hand,
    $$\operatorname{d_p}(p,\{|\bar y| = \eta|\bar x|\}) = \frac{|\bar y_0|-\eta|\bar x_0|}{\sqrt{1+\eta^2}} \geq \frac{|\bar y_0|-\eta|\bar x_0|}{1+\eta}.$$
    
    \noindent\textit{Step 2.} We will now check condition (i) in Definition \ref{defn:boundary_accessible}. Let $\delta \in (0,\frac{1}{2})$, assume without loss of generality that $\bar y_0 = (\rho,0,\cdots,0)$, and define
    $$(\bar x_i,\bar y_i,t_i) := \left(\bar x_0, \rho + \frac{\delta}{4}i,0,\ldots,0,t_0-\frac{\delta^2}{16}i\right).$$
    Then, $r_i = \frac{\delta}{4}$ for all $i$, $\operatorname{d_p}((\bar x_0,\bar y_0,t_0),D_{\eta,m}) \geq \frac{\delta}{2}$ by assumption,
    \begin{align*}
        \operatorname{d_p}((x_i,t_i),D_{\eta,m}) &\geq \frac{1}{1+\eta}\min\left\{\rho+\frac{\delta}{4}i-\eta\sqrt{|t_0|+\frac{\delta^2}{16}i},\,\rho+\frac{\delta}{4}i-\eta|\bar x_0|\right\}\\
        &\geq \frac{1}{1+\eta}\min\left\{\rho-\eta\sqrt{|t_0|} + \frac{\delta}{4}(1-\eta)i,\,\frac{\delta}{2}+\frac{\delta}{4}i\right\}\\
        &\geq \frac{1}{1+\eta}\min\left\{\frac{\delta}{2}+\frac{\delta}{4}(1-\eta),\,\frac{3\delta}{4}\right\} \geq \frac{11\delta}{20}
    \end{align*}
    for all $i \geq 1$, and finally 
    $$\operatorname{d_p}((x_4,t_4),D_{\eta,m}) \geq \frac{1}{1+\eta}\min\left\{\frac{\delta}{2}+\delta(1-\eta),\,\frac{\delta}{2}+\delta\right\} \geq \delta.$$
    
    \noindent\textit{Step 3.} Finally, we check condition (ii). Given $(\bar x_0,\bar y_0, t_0) \in Q_{1-2\delta}\setminus D_{\eta,m}$, assume as before that $\bar y_0 = (\rho,0,\ldots,0)$.
    
    Let $E = B_{2\delta}((\bar x_0,\bar y_0))\cap\left\{ \bar y^{(1)} \geq \rho + \frac{7\delta}{4}\right\}$, where $\bar y^{(1)}$ represents the first coordinate of $\bar y$. Then, $E\times\{t_0-4\delta^2\} \subset \p_p Q_{2\delta}(x_0,t_0)$, and $|E| = c_0|\p_pQ_{2\delta}|$ for a dimensional $c_0 > 0$. Moreover,
    \begin{align*}
        \operatorname{d_p}(E\times\{t_0-4\delta^2\},D_{\eta,m}) &\geq \frac{1}{1+\eta}\min\left\{\rho+\frac{7\delta}{4}-\eta\sqrt{|t_0|+4\delta^2},\,\rho+\frac{7\delta}{4}-\eta|\bar x_0|\right\}\\
        &> \frac{1}{1+\eta}\min\left\{\rho-\eta\sqrt{|t_0|}+\frac{7\delta}{4}-2\eta\delta,\,\frac{7\delta}{4}\right\} > \delta.
    \end{align*}
    Thus, we have checked that our domain is parabolically accessible in the sense of Definition \ref{defn:boundary_accessible}.    
\end{proof}

Finally, thanks to the interior Harnack, we can give the:

\begin{proof}[Proof of Proposition \ref{cor:almost_positivity}]
	We divide the proof into two steps:
	
    \noindent\textit{Step 1.} We show that, for every $\delta \in (0,\frac{1}{2})$, there exists $N_\delta \in \N$ (not depending on $\eta$) such that, for every point $(\bar x_0,\bar y_0,t_0)$ in
    $$\Omega_\delta := \left\{(\bar x,\bar y,t) \in D_{\eta,m}\cap Q_{2/3} \ | \ \operatorname{d_p}((\bar x,\bar y,t),\p D_{\eta,m}) > \delta\right\},$$
    there exists a Harnack chain $(\bar x_i,\bar y_i,t_i)$, $i = 0,\ldots,N \leq N_\delta$, satisfying 
    
    $$Q_{2\sqrt{t_i-t_{i+1}}}(\bar x_i,\bar y_i,t_i) \subset D_{\eta,m}, \ |\bar x_i-\bar x_{i+1}|^2 + |\bar y_i - \bar y_{i+1}|^2 \leq |t_i-t_{i+1}|,$$
    and
    $$|\bar y_N| \in \left[\frac{3}{8},\frac{2}{3}\right], \ t_N \in [-0.99,0].$$
    
    To prove it, assume first without loss of generality that $\bar y_0 = (\rho,0,\ldots,0)$, and that $\rho < \frac{3}{8}$, otherwise it suffices to take $N = 0$.
    
    Now, we will take
    $$\left\{\begin{array}{rcl}
        \bar x_{i+1} &:= &\bar x_i,\\
        \bar y_{i+1} &:= &\left(|y_i|+\dfrac{1}{2}\operatorname{d_p}((\bar x_i,\bar y_i,t_i),D_{\eta,m}),0,\ldots,0\right),\\
        t_{i+1} &_= &t_i - \dfrac{1}{4}\operatorname{d_p}((\bar x_i,\bar y_i,t_i),D_{\eta,m})^2.
    \end{array}\right.$$
    By the computations in the proof of Lemma \ref{lem:D_eta_accessible} (Step 2),
    $$\operatorname{d_p}((\bar x_{i+1},\bar y_{i+1},t_{i+1}),D_{\eta,m}) \geq 1.1\operatorname{d_p}((\bar x_i,\bar y_i,t_i),D_{\eta,m}).$$
    Moreover, since $\operatorname{d_p}((\bar x,\bar y,t),D_{\eta,m}) \leq |\bar y|$ (cf. Lemma \ref{lem:D_eta_accessible}, Step 1),
    $$|\bar y_{i+1}| \leq 1.5|\bar y_i|.$$
    
    Then, we can define $N_\delta$ as the minimum positive integer such that $1.1^{N_\delta}\delta > \frac{3}{8}$, and then choose some $N \leq N_\delta$ such that $|\bar y_N| \in [\frac{3}{8},\frac{9}{16}]$.
    
    Finally, we estimate
    $$|t_N| = |t_0| + \sum\limits_{i = 1}^N|t_i-t_{i-1}| = |t_0| + \sum\limits_{i=1}^N|\bar y_i-\bar y_{i-1}|^2 \leq |t_0| + |\bar y_N- \bar y_0|^2 \leq \frac{2}{3}+\frac{81}{256} < 0.99.$$
    
    \noindent\textit{Step 2.} We finish the proof combining the interior Harnack with Proposition \ref{prop:almost_positivity_bulk}.
    
    First, by Theorem \ref{thm:interior_Harnack} applied to $u+\nu$, 
    $$u+\nu \geq c_1 > 0 \quad \text{in} \ \left\{\frac{3}{8} \leq |\bar y| \leq \frac{2}{3},\, -0.99 \leq t \leq 0\right\}\cap Q_1.$$
    Then, using the interior Harnack repeatedly on the Harnack chain constructed in Step 2, we deduce that $u+\nu \geq c^{N_\delta}c_1$ in $\Omega_\delta$.
    
    Now,
    $$v(\bar x,\bar y,t) := \nu^{-1/2}u\left(\frac{2}{3}\bar x,\frac{2}{3}\bar y,\frac{4}{9}t\right)$$
    satisfies
    $$\left\{\begin{array}{rclll}
        v_t-\Delta v & = & 0 & \text{in} & D_{\eta,m}\\
        v & \geq & \!-\sqrt{\nu} & \text{in} & D_{\eta,m}\cap Q_1\\
        v & \geq & \nu^{-1/2}(c^{N_\delta}c_1-\nu) & \text{in} & \Omega_{3\delta/2}\cap Q_1\\
        v & = & 0 & \text{on} & \p D_{\eta,m}\cap Q_1,
    \end{array}\right.$$
    and finally choosing $\delta$, and then $\nu$ small enough, we can apply Proposition \ref{prop:almost_positivity_bulk} to obtain $v \geq 0$ in $D_{\eta,m}\cap Q_{1/2}$. The result follows by a standard covering argument.
\end{proof}

\section{Self-similar solutions}\label{sec:homo}

In this section we will construct self-similar solutions to the heat equation in the complement of \textit{parabolic cones}, and estimate their growth rates. 
To do so, we follow the construction in \cite[Section A.4]{Tor24} (see also \cite[Lemma 5.8]{FRS24}).

\begin{prop}\label{prop:cone_solns}
    Let $m \in \{0,\ldots,n-2\}$. There exists $\eta_0 \in (0,\frac{1}{4})$, depending only on $n$ and $m$, such that for every $\eta \in (0,\eta_0)$, there exists a unique positive solution to
    $$\p_t\varphi_\eta - \Delta\varphi_\eta = 0 \quad \text{in} \quad D_{\eta,m}$$
    such that 
    \[\varphi_\eta(\lambda x,\lambda^2 t) = \lambda^{2\varepsilon}\varphi_\eta(x,t)\quad \textrm{for all}\quad \lambda>0,\]
     for some $\varepsilon > 0$, and $\varphi_\eta(e_n,-1) = 1$. Here $D_{\eta,m}$ is as in Definition \ref{defn:D_eta}.
    
    Moreover, $\|\varphi_\eta\|_{L^\infty(Q_1)} \leq C_*$, $\varphi_\eta \geq c_*$ on $\{|\bar y| = \frac{1}{2}\}\cap Q_1$, and
    \begin{itemize}
        \item[(a)] If $m = n-2$, we have $\varepsilon \leq C_*|\log\eta|^{-1}$.
        
        \vspace{0.1cm}
        \item[(b)] If $m < n-2$, we have $\varepsilon \leq C_*\eta^{n-m-2}$.
    \end{itemize}
    The constants $c_*$ and $C_*$ are positive, and they depend only on $m$ and $n$ (not on $\eta$).
\end{prop}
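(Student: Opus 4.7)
The plan is a standard self-similar reduction followed by a Gaussian-capacity estimate for the resulting principal eigenvalue, in the spirit of \cite{FRS24,Tor24}. First, I would look for $\varphi_\eta$ in the self-similar form $\varphi_\eta(x,t) = (-t)^\varepsilon\Phi(x/\sqrt{-t})$ for $t<0$ (extended to $t\ge 0$ by the analogous forward ansatz $t^\varepsilon\tilde\Phi(x/\sqrt{t})$, matched through $\{t=0\}$ via the $2\varepsilon$-homogeneity in $x$). Parabolic self-similarity of $D_{\eta,m}$ makes $\Phi$ live on the fixed cross-section
$$\Omega_\eta := \{|\bar y|>\eta\}\cap\{|\bar y|>\eta|\bar x|\}\subset\R^n,$$
and the heat equation becomes the Dirichlet eigenvalue problem for the Ornstein--Uhlenbeck operator
$$\Delta\Phi - \tfrac12\, z\cdot\nabla\Phi + \varepsilon\,\Phi = 0 \ \text{in }\Omega_\eta,\qquad \Phi = 0 \ \text{on }\p\Omega_\eta,$$
which is self-adjoint in $L^2(\Omega_\eta; G\,dz)$ with the Gaussian weight $G(z):=e^{-|z|^2/4}$.

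Next, by Gaussian Poincar\'e the Ornstein--Uhlenbeck spectrum is discrete, so the principal eigenvalue
$$\varepsilon(\eta) := \inf_{\Phi\in H^1_0(\Omega_\eta;G)\setminus\{0\}} \frac{\int_{\Omega_\eta}|\nabla\Phi|^2 G}{\int_{\Omega_\eta}\Phi^2 G}$$
is strictly positive, attained by a positive ground state $\Phi$, which is unique up to scaling by Krein--Rutman together with the parabolic Harnack inequality. Normalizing $\Phi(e_n)=1$ (possible since $e_n\in\Omega_\eta$ for $\eta<1$) then determines $\varphi_\eta$ uniquely. The uniform bounds $\|\varphi_\eta\|_{L^\infty(Q_1)}\le C_*$ and $\varphi_\eta\ge c_*$ on $\{|\bar y|=1/2\}\cap Q_1$ will follow from parabolic Harnack chains (Theorem~\ref{thm:interior_Harnack}) inside $D_{\eta,m}$ joining any such point to $(e_n,-1)$: all these points lie at parabolic distance $\gtrsim 1$ from $\p D_{\eta,m}$ uniformly in $\eta\in(0,\eta_0)$, and the scaling factor $\lambda^{2\varepsilon}$ stays $O(1)$ because $\varepsilon(\eta)\to 0$ as $\eta\to 0$.

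The main quantitative step --- and the one I expect to be the main obstacle --- is the upper bound on $\varepsilon(\eta)$. The idea is to test the Rayleigh quotient against the Gaussian capacitary potential of the $m$-dimensional subspace $\{\bar y=0\}$, which sits at codimension $n-m$. Concretely, I would take
$$\Phi(z) = \psi_\eta\!\left(\tfrac{|\bar y|}{1+|\bar x|}\right),\qquad \psi_\eta(r) = \begin{cases} \min\!\big\{1,\ \tfrac{\log(r/\eta)}{\log(1/\eta)}\big\} & \text{if } m=n-2,\\[0.15em] \min\!\big\{1,\ 1-(\eta/r)^{n-m-2}\big\} & \text{if } m<n-2.\end{cases}$$
The rescaling by $1+|\bar x|$ is chosen so that the argument of $\psi_\eta$ is at most $\eta$ on both $\{|\bar y|=\eta\}$ and $\{|\bar y|=\eta|\bar x|\}$; hence $\Phi$ vanishes on all of $\p\Omega_\eta$. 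After the change of variables $u=|\bar y|/(1+|\bar x|)$, and using that $\int\Phi^2 G\sim 1$ and that $G$ localizes $\bar x$ on a bounded set, a direct calculation yields
$$\int_{\Omega_\eta}|\nabla\Phi|^2 G \ \lesssim\ \begin{cases} |\log\eta|^{-1} & \text{if } m=n-2,\\[0.1em] \eta^{n-m-2} & \text{if } m<n-2,\end{cases}$$
which gives the claimed bounds on $\varepsilon$. The main technical point will be to check that the $\nabla_{\bar x}$ contribution, arising from differentiating the argument $|\bar y|/(1+|\bar x|)$, is no larger than the main $\nabla_{\bar y}$ term; this reduces to the pointwise bound $|\nabla_{\bar x}\Phi|\le u\,|\nabla_{\bar y}\Phi|\le|\nabla_{\bar y}\Phi|$ in the region $u\in(\eta,1)$ where $\psi_\eta'\neq 0$, so the two contributions have the same order and the capacity scaling survives.
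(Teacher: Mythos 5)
Your overall scheme matches the paper's: a self-similar reduction to an Ornstein--Uhlenbeck Dirichlet eigenvalue problem on a fixed cross-section, existence and uniqueness from the Rayleigh quotient, and a capacitary competitor to estimate $\varepsilon(\eta)$. Your test function $\psi_\eta\bigl(|\bar y|/(1+|\bar x|)\bigr)$ is a valid alternative to the paper's product $f_{\eta,n-m}(\min\{1,|\bar y|\})\,f_{\eta,n-m}(|\bar y|/|x|)$, and your pointwise check $|\nabla_{\bar x}\Phi| = u\,|\nabla_{\bar y}\Phi| \le |\nabla_{\bar y}\Phi|$ on $\{\psi_\eta'\neq 0\}$ is correct; both competitors give the stated scaling. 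The Harnack-chain lower bound $\varphi_\eta\ge c_*$ on $\{|\bar y|=\frac12\}\cap Q_1$ is also in the same spirit as the paper's Step~4 (there via radial symmetry, homogeneity, and interior Harnack).

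The genuine gap is the uniform upper bound $\|\varphi_\eta\|_{L^\infty(Q_1)} \le C_*$. You derive it from Harnack chains to $(e_n,-1)$ together with the claim that ``all these points lie at parabolic distance $\gtrsim 1$ from $\partial D_{\eta,m}$,'' but that is only true for $\{|\bar y|=\frac12\}\cap Q_1$, not for all of $Q_1$: points in $Q_1$ can lie arbitrarily close to $\partial D_{\eta,m}$, and, more seriously, as $t\to 0^-$ the self-similar coordinate $z=x/\sqrt{|t|}$ escapes to infinity, so the required Harnack chain has unbounded length. The remark that $\lambda^{2\varepsilon}=O(1)$ because $\varepsilon\to 0$ helps only for bounded $\lambda$ and fails exactly where it is needed, as $\lambda\to\infty$. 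What one must rule out, uniformly in $\eta$, is that $\phi_\eta(z)$ grows faster than $|z|^{2\varepsilon}$ as $|z|\to\infty$, and Harnack alone does not do this. The paper's Step~3 uses an entirely different argument: the Gaussian log-Sobolev inequality plus the smallness of $\varepsilon$ give $\int(\phi_\eta^2-1)_+\,e^{-|z|^2/4}\,dz \le (4\pi)^{n/2}\varepsilon$, which is upgraded via interior Harnack to a lower bound on $\phi_\eta(e_n)$ (hence an upper bound on $c_\varepsilon$); then $\varphi_\eta$, extended by zero, is a subcaloric function on all of $\R^n\times(-1,0)$, so $\varphi_\eta(x,t)$ is bounded by a Gaussian convolution of $c_\varepsilon\phi_\eta$, and Cauchy--Schwarz together with $\|\phi_\eta\|_{L^2(e^{-|z|^2/4}dz)}=(4\pi)^{n/4}$ closes the estimate. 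Your proposal supplies no substitute for this step, so the $L^\infty$ bound as you present it does not go through.
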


We recall the Gaussian log-Sobolev inequality, that will be needed in the proof.
\begin{lem}\label{lem:log_sobolev}
    Let $f \in H^1(\R^n;\mu)$, where $\mu$ is the Gaussian measure,
    $$\mathrm{d}\mu = (4\pi)^{-n/2}e^{-|x|^2/4}\mathrm{d}x.$$
    Then,
    $$\int f^2\log f^2\mathrm{d}\mu \leq \int |\nabla f|^2\mathrm{d}\mu + \left(\int f^2\mathrm{d}\mu\right)\log\left(\int f^2\mathrm{d}\mu\right).$$
\end{lem}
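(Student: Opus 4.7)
The plan is to prove this classical Gaussian logarithmic Sobolev inequality via entropy dissipation along the Ornstein--Uhlenbeck semigroup associated to $\mu$, following the Bakry--Émery scheme. By truncation and density I would first reduce to $f$ smooth with $\varepsilon \le f^2 \le \varepsilon^{-1}$ for some $\varepsilon > 0$, so that all subsequent manipulations are fully rigorous; the general case then follows by a monotone/dominated convergence argument, routine in Gaussian analysis.

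Next, I would introduce the OU semigroup $(P_s)_{s\ge 0}$ with invariant measure $\mu$, whose generator is $L = \Delta - \tfrac12\, x\cdot\nabla$, and which admits Mehler's explicit representation
\[
P_s h(x) \,=\, \int h\bigl(e^{-s/2}x + \sqrt{2(1-e^{-s})}\,y\bigr)\,d\nu(y),
\]
with $\nu$ the standard Gaussian. Set $g_s := P_s(f^2)$ and $\Phi(s) := \int g_s \log g_s\,d\mu$. Integrating by parts against $L$, with no boundary terms thanks to Gaussian decay, yields the entropy dissipation identity
\[
\Phi'(s) \,=\, -\int \frac{|\nabla g_s|^2}{g_s}\,d\mu \,=:\, -I_\mu(g_s) \le 0.
\]

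The crucial step is the gradient commutation $\nabla P_s h = e^{-s/2}\,P_s(\nabla h)$, which is immediate from Mehler's formula. Squaring and applying Cauchy--Schwarz under the Mehler integral then yields the Fisher information contraction $I_\mu(P_s h) \le e^{-s}\, I_\mu(h)$. Integrating $\Phi'$ from $0$ to $\infty$ and identifying $\Phi(\infty) = \bigl(\int f^2\,d\mu\bigr)\log\bigl(\int f^2\,d\mu\bigr)$ via the ergodic limit $g_s \to \int f^2\,d\mu$, one obtains
\[
\int f^2 \log f^2 \, d\mu \,-\, \Bigl(\int f^2 \, d\mu\Bigr) \log \Bigl(\int f^2 \, d\mu\Bigr) \,\le\, I_\mu(f^2)\int_0^\infty e^{-s}\,ds,
\]
and rewriting $I_\mu(f^2) = \int |\nabla f^2|^2/f^2\,d\mu$ in terms of $|\nabla f|^2$ via the pointwise identity $|\nabla f^2|^2/f^2 = 4|\nabla f|^2$ produces the claimed bound.

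The main obstacle in the scheme is the Fisher information contraction $I_\mu(P_s h)\le e^{-s}\,I_\mu(h)$: this is the concrete manifestation of the Bakry--Émery curvature-dimension condition $\operatorname{CD}(\tfrac12,\infty)$ for $\mu$, and is where the specific Gaussian structure enters in a decisive way. Once it is established from the Mehler commutation (plus Jensen's inequality inside the integrand), the remaining technicalities---justifying differentiation of $\Phi$ under the integral, the vanishing of boundary terms at infinity in the integration by parts, and the $s\to\infty$ ergodic limit---are routine for smooth, strictly positive, bounded approximations of $f^2$.
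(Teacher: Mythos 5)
Your semigroup scheme is sound, and since the paper states Lemma \ref{lem:log_sobolev} without proof (it is recalled as a classical fact), there is no ``paper proof'' to match; the Bakry--\'Emery route you outline is a perfectly legitimate way to substantiate it. Your ingredients are all correct for this measure: $L=\Delta-\tfrac12\,x\cdot\nabla$ is indeed the operator symmetric with respect to $\mu$ (it is the paper's $\L_{OU}$), the Mehler representation and the commutation $\nabla P_s h=e^{-s/2}P_s(\nabla h)$ are right, the entropy-dissipation identity and the Fisher-information decay $I_\mu(P_s h)\le e^{-s}I_\mu(h)$ follow as you say, and integrating in $s$ gives $\int f^2\log f^2\,\mathrm{d}\mu-\bigl(\int f^2\,\mathrm{d}\mu\bigr)\log\bigl(\int f^2\,\mathrm{d}\mu\bigr)\le I_\mu(f^2)$.

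The concrete gap is your last sentence: the pointwise identity $|\nabla f^2|^2/f^2=4|\nabla f|^2$ does \emph{not} ``produce the claimed bound''; it produces
\begin{equation*}
\int f^2\log f^2\,\mathrm{d}\mu-\Bigl(\int f^2\,\mathrm{d}\mu\Bigr)\log\Bigl(\int f^2\,\mathrm{d}\mu\Bigr)\;\le\;4\int|\nabla f|^2\,\mathrm{d}\mu,
\end{equation*}
i.e.\ the inequality with constant $4$ in front of the Dirichlet energy, not constant $1$. This is not a fixable loss in your argument: for $\mathrm{d}\mu=(4\pi)^{-n/2}e^{-|x|^2/4}\mathrm{d}x$, which is a Gaussian of variance $2$, the constant $4$ is sharp. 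Indeed, $f(x)=e^{ax_1}$ gives $\int f^2\,\mathrm{d}\mu=e^{4a^2}$, $\int f^2\log f^2\,\mathrm{d}\mu=8a^2e^{4a^2}$, hence entropy $4a^2e^{4a^2}$, while $\int|\nabla f|^2\,\mathrm{d}\mu=a^2e^{4a^2}$; so the statement with constant $1$, as printed in the lemma, fails for every $a\neq0$ (it is Gross's inequality transplanted to this normalization without rescaling, and its correct form here is either the display above or Gross's version with $\log|f|$ and constant $2$). Your write-up should therefore carry the factor $4$ honestly and flag the normalization mismatch rather than assert the constants agree; note that this is harmless for the only use of the lemma in the paper (Step 3 of the proof of Proposition \ref{prop:cone_solns}), where one merely needs $\int(\phi_\eta^2-1)_+\,\mathrm{d}\mu\le C\varepsilon$ with some dimensional constant.
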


Now we will prove our characterization of self-similar solutions.
\begin{proof}[Proof of Proposition \ref{prop:cone_solns}]
    The proof is divided into four steps. First, we construct~$\varphi_\eta$. Then, we estimate $\varepsilon$. In the third step, we estimate $\|\varphi_\eta\|_{L^\infty(Q_1)}$, and in the last one we show that $\varphi_\eta \geq c > 0$ on $\{|\bar y| = \frac{1}{2}\}$.

    \noindent\textit{Step 1.} We construct $\varphi_\eta$.    
    We can write 
    \[\varphi_\eta(x,t) = c_\varepsilon|t|^\varepsilon\phi(x/|t|^{1/2}).\]
    Then, $\phi$ solves the following eigenvalue problem for the Ornstein-Uhlenbeck operator (see \cite[Lemma 5.8]{FRS24}):
    $$\left\{\begin{array}{rclll}
        \L_{OU}\phi + \varepsilon\phi & = & 0 & \text{in} & \R^n \setminus \left\{|\bar y| \leq \eta\max\{1,|x|\}\right\}\\[0.1cm]
        \phi & = & 0 & \text{on} & \p\left\{|\bar y| \leq \eta\max\{1,|x|\}\right\},
    \end{array}\right.$$
    where
    $$\L_{OU}\phi(x) := \Delta\phi(x) - \frac{x}{2}\cdot\nabla\phi(x) = e^{|x|^2/4}\operatorname{div}(e^{-|x|^2/4}\nabla\phi).$$   
    Since $\phi$ is positive, it is the first eigenfunction for $\L_{OU}$ in this domain, and therefore by the Rayleigh quotient characterization,
    $$\varepsilon = \inf\limits_{u \in C^{0,1}_c(\R^n \setminus \left\{|\bar y| \leq \eta\max\{1,|x|\}\right\}), \ \|u\|_{L^2_w} = 1}(4\pi)^{-n/2}\int|\nabla u|^2e^{-|x|^2/4},$$
    where 
    $$\|u\|_{L^2_w}^2 := (4\pi)^{-n/2}\int u^2e^{-|x|^2/4},$$
    and the infimum is attained by a unique function $\phi_\eta \in L^2_w$ by standard arguments. To obtain the desired normalization, we choose $c_\varepsilon = \phi_\eta(e_n)^{-1}$.
    
    \noindent\textit{Step 2.} Then, we estimate precisely $\varepsilon$ using a competitor. Let $f_{\eta,2}, f_{\eta,k} : [0,\infty) \to \R$ be defined as
    $$\left\{\begin{array}{rcl}
        f_{\eta,2}(r) & = & 1 + \dfrac{\log r}{|\log \eta|},\\[0.4cm]
        f_{\eta,k}(r) & = & \dfrac{r^{k-2}-\eta^{k-2}}{r^{k-2}(1-\eta^{k-2})},
    \end{array}\right.$$
    with $k \geq 3$. Then, let $u$ be defined on $\left\{|\bar y| > \eta\max\{1,|x|\}\right\}$ as
    $$u := f_{\eta,{n-m}}\left(\min\{1,|\bar y|\}\right)f_{\eta,n-m}\left(\frac{|\bar y|}{|x|}\right).$$   
    Then,
    $$\varepsilon \leq \left(\int u^2e^{-|x|^2/4}\right)^{-1}\int|\nabla u|^2e^{-|x|^2/4}.$$    
    First, since $\eta < \frac{1}{4}$,
    $$f_{\eta,2}\left(\frac{1}{2}\right) > f_{1/4,2}\left(\frac{1}{2}\right) = \frac{1}{2},$$
    and if $k \geq 3$,
    $$f_{\eta,k}\left(\frac{1}{2}\right) > f_{1/4,k}\left(\frac{1}{2}\right) = \frac{2^{2-k}-4^{2-k}}{2^{2-k}(1-4^{2-k})} = \frac{1 - 2^{2-k}}{1 - 4^{2-k}} > \frac{1}{2}.$$
    Then, it follows that if $|\bar y| \geq 1$ and $2|\bar y| \geq |x|$, $u > \frac{1}{2}$. Hence,
    $$\int u^2e^{-|x|^2/4} > \frac{1}{2}\int_{\{2|\bar y| \geq |x|\}\cap\{|\bar y| \geq 1\}}e^{-|x|^2/4} = c(n,m) > 0.$$    
    Now, to control the gradient, we compute
    \begin{align*}
        |\nabla u| &\leq f_{\eta,n-m}'\left(\min\{1,|\bar y|\}\right)\left|\nabla |\bar y|\right|\chi_{\{|\bar y| \leq 1\}}f_{\eta,n-m}\left(\frac{|\bar y|}{|x|}\right)\\
        &\quad+f_{\eta,n-m}'\left(\frac{|\bar y|}{|x|}\right)\left|\nabla\frac{|\bar y|}{|x|}\right|f_{\eta,n-m}\left(\min\{1,|\bar y|\}\right)\\
        &\leq f_{\eta,n-m}'\left(|\bar y|\right)\chi_{B_1}(\bar y) + f_{\eta,n-m}'\left(\frac{|\bar y|}{|x|}\right)\frac{|\bar x|}{|x|^2}\\
        &\leq f_{\eta,n-m}'\left(|\bar y|\right)\chi_{B_1}(\bar y) + \frac{1}{|x|}f_{\eta,n-m}'\left(\frac{|\bar y|}{|x|}\right).
    \end{align*}    
    Then, since $u$ is defined on $\left\{|\bar y| > \eta\max\{1,|x|\}\right\}$,    
    $$\int|\nabla u|^2e^{-|x|^2/4} \leq \int_{\{\eta < |\bar y| < 1\}}f'_{\eta,n-m}\left(|\bar y|\right)^2e^{-|x|^2/4} + \int_{\{|\bar y| > \eta|x|\}}f'_{\eta,n-m}\left(\frac{|\bar y|}{|x|}\right)^2\frac{e^{-|x|^2/4}}{|x|}.$$    
    On the one hand,
    \begin{align*}
        \int_{\{\eta < |\bar y| < 1\}}f'_{\eta,n-m}\left(|\bar y|\right)^2e^{-|x|^2/4} &= \int e^{-|\bar x|^2/4}\mathrm{d}\bar x\int_{\{\eta < |\bar y| < 1\}}f'_{\eta,n-m}\left(|\bar y|\right)^2e^{-|\bar y|^2/4}\mathrm{d}\bar y\\
        &\lesssim \int_\eta^1f'_{\eta,n-m}(r)^2r^{n-m-1}\mathrm{d}r,
    \end{align*}
    and thus if $m = n-2$,
    $$\int_{\{\eta < |\bar y| < 1\}}f'_{\eta,2}\left(|\bar y|\right)^2e^{-|x|^2/4} \lesssim \int_\eta^1\frac{r}{(r\log\eta)^2}\mathrm{d}r \lesssim |\log\eta|^{-1},$$
    and if $m < n-2$,
    $$\int_{\{\eta < |\bar y| < 1\}}f'_{\eta,n-m}\left(|\bar y|\right)^2e^{-|x|^2/4} \lesssim \int_\eta^1\left(\frac{\eta^{n-m-2}}{r^{n-m-1}}\right)^2r^{n-m-1}\mathrm{d}r \lesssim \eta^{n-m-2}.$$
    On the other hand, 
    \begin{align*}
        \int_{\{|\bar y| > \eta|x|\}}f'_{\eta,n-m}\left(\frac{|\bar y|}{|x|}\right)^2\frac{e^{-|x|^2/4}}{|x|} &= \int_{\p B_1\cap\{|\bar y| > \eta\}}f'_{\eta,n-m}\left(|\bar y|\right)^2\int_0^\infty\frac{e^{-\rho^2/4}}{\rho}\rho^{n-1}\mathrm{d}\rho\\
        &\lesssim \int_{\p B_1\cap\{|\bar y| > \eta\}}f'_{\eta,n-m}\left(|\bar y|\right)^2\\
        &= \int_\eta^1f'_{\eta,n-m}(r)^2\left|\p B_1\cap\{|\bar y| = r\}\right|\mathrm{d}r\\
        &= \int_\eta^1f'_{\eta,n-m}(r)^2\left|\{|\bar x| = \sqrt{1-r^2},\ |\bar y| = r\}\right|\mathrm{d}r\\
        &\lesssim \int_\eta^1f'_{\eta,n-m}(r)^2r^{n-m-1}\mathrm{d}r,
    \end{align*}
    and we can proceed as with the first term. Summing up, if $m = n-2$,
    $$\int|\nabla u|^2e^{-|x|^2/4} \lesssim |\log\eta|^{-1},$$
    and if $m < n-2$,
    $$\int|\nabla u|^2e^{-|x|^2/4} \lesssim \eta^{n-m-2},$$
    which combined with
    $$\int u^2e^{-|x|^2/4} \geq c(n,m) > 0$$
    yields the desired result.
    
    \noindent\textit{Step 3.} Then, we prove that $\|\varphi_\eta\|_{L^\infty(Q_1)} \leq C$, independently of $\eta$.
    
    Recall that in Step 1 we defined $\varphi_\eta = c_\varepsilon^{-1}\phi_\eta$. The upper bound comes from an application of the interior Harnack and the Gaussian log-Sobolev inequality. First,
    note that $t^2 \leq 1 + t^2\log t^2$, and then
    $$(4\pi)^{-n/2}\int (\phi_\eta^2-1)_+e^{-|x|^2/4} \leq (4\pi)^{-n/2}\int \phi_\eta^2\log_+\big(\phi_\eta^2\big) e^{-|x|^2/4}.$$
    
    Now, by the Gaussian log-Sobolev inequality (Lemma \ref{lem:log_sobolev}) applied to $\phi_\eta\chi_{\{\phi_\eta > 1\}}$,
    $$(4\pi)^{-n/2}\int(\phi_\eta^2-1)_+e^{-|x|^2/4} \leq (4\pi)^{-n/2}\int|\nabla\phi_\eta|^2e^{-|x|^2/4} = \varepsilon.$$   
    Now, let
    $$A_1 = B_1 \cap \left\{|\bar y| \geq \frac{1}{2}\max\left\{1,|x|\right\}\right\}.$$
    Then,
    \begin{align*}
        (4\pi)^{-n/2}\int_{A_1}\phi_\eta^2e^{-|x|^2/4} &= 1 - (4\pi)^{-n/2}\int_{\R^n\setminus A_1}\phi_\eta^2e^{-|x|^2/4}\\
        &= (4\pi)^{-n/2}\int_{\R^n}e^{-|x|^2/4} - (4\pi)^{-n/2}\int_{\R^n\setminus A_1}\phi_\eta^2e^{-|x|^2/4}\\
        &\geq (4\pi)^{-n/2}\int_{A_1}e^{-|x|^2/4} - (4\pi)^{-n/2}\int_{\R^n\setminus A_1}(\phi_\eta^2-1)_+e^{-|x|^2/4}\\
        &\geq (4\pi)^{-n/2}\int_{A_1}e^{-|x|^2/4} - \varepsilon \geq a - \varepsilon \geq \frac{a}{2},
    \end{align*}
    provided that $\eta_0$ (and then $\varepsilon$) is small enough. Then,
    $$\sup\limits_{A_1}\phi_\eta \geq \left(\frac{\int_{A_1}\phi_\eta^2 e^{-|x|^2/4}}{\int_{A_1} e^{-|x|^2/4}}\right)^{1/2} \geq \frac{1}{\sqrt{2}}.$$
    
    Now, $\tilde\varphi := |t|^\varepsilon\phi(x/|t|^{1/2})$ is a solution to the heat equation in $D_{\eta,m}$, and in particular in $Q_{2} \cap \{|\bar y| \geq \frac{1}{2}\}$. Now let $$A_{\sqrt{2}} := B_{\sqrt{2}}\cap\left\{|\bar y| \geq \max\left\{1,|x|\right\}\right\}.$$
    By the interior Harnack,
    $$\phi_\eta(e_n) \geq \inf\limits_{A_{\sqrt{2}}\times\{-1\}} \tilde\varphi \geq c \sup\limits_{A_{\sqrt{2}}\times\{-\sqrt{2}\}} \tilde\varphi \geq 2^{\varepsilon/2}c\sup\limits_{A_1}\phi_\eta \geq \frac{c}{2},$$
    and hence the constant $c_\varepsilon = \phi_\eta(e_n)^{-1}$ from Step 1 is uniformly bounded as $\eta \rightarrow 0^+$.
    
    To end the argument, we just use that $\varphi_\eta$ is a subsolution to the heat equation in the full space, exactly as in \cite[Proposition 6.3]{Tor24}. 
    Then, for every $(x,t) \in Q_{1/2}$,
    \begin{align*}
        \varphi_\eta(x,t) &\leq C\int c_\varepsilon\phi_\eta(y)e^{-\frac{|x-y|^2}{4(1+t)}}\mathrm{d}y \leq C\int c_\varepsilon\phi_\eta(y)e^{-|x-y|^2/4}\\
        &\leq Cc_\varepsilon\left(\int e^{-|x-y|^2/6}\right)^{1/2}\left(\int\phi_\eta^2e^{-|x-y|^2/3}\right)^{1/2} \leq Cc_\varepsilon\left(\int\phi_\eta^2e^{-|y|^2/4}\right) \leq C,
    \end{align*}
    where we used that for all $x \in B_{1/2}$,
    $$-\frac{|x-y|^2}{3} \leq C - \frac{|y|^2}{4}.$$
    Hence, by homogeneity, $\|\varphi_\eta\|_{L^\infty(Q_1)} \leq 2^\varepsilon C$.
    
    \noindent\textit{Step 4.} We finally show that $\varphi_\eta \geq c > 0$ on $\{|\bar y| = \frac{1}{2}\}$, independently of $\eta$.
    
    For the lower bound, we start observing that by symmetry, $\varphi_\eta \equiv 1$ on $$E_1 = \{|\bar x| = 0, |\bar y| = 1, t = -1\},$$
    and by homogeneity, $\varphi_\eta \equiv 2^{\varepsilon/4} > 1$ on
    $$E_{\sqrt{2}} = \{|\bar x| = 0, |\bar y| = \sqrt{2}, t = -\sqrt[4]{2}\}.$$
    
    Now, $\varphi_\eta$ is a solution to the heat equation in $Q_{3/2}\cap\{|\bar y| > \frac{3}{8}\}$. Hence, by the interior Harnack inequality,
    $$\inf\limits_{Q_1\cap\{|\bar y| = \frac{1}{2}\}} \varphi_\eta \geq c\sup\limits_{E_{\sqrt{2}}}\varphi_\eta > c,$$
    as we wanted to prove.
\end{proof}

\section{Extinction rate at singular points}\label{sec:rates}

\subsection{Nondegeneracy}

The expansion in Theorem \ref{thm:expansion} gives rise to an upper bound on the contact set, that is, at small scales, we can ensure that $u$ is positive away from the zero set of the blow-up $p_2$.

\begin{prop}\label{prop:contact_set_ub}
    Let $u$ be a solution to \eqref{eq:obstacleProblem} such that $(0,0)\in\Sigma_m$. Then (after a rotation), there exists $C > 0$ such that for all $r \in (0,\frac{1}{2}]$,
    $$\{u > 0\}\cap Q_r \supset D_{C\sqrt{\sigma(r)},m},$$
    where $D_{\eta,m}$ is as in Definition \ref{defn:D_eta}, and $\sigma(r)$ is as defined in Theorem \ref{thm:expansion}.
    
    In particular, we have 
    $$\{u = 0\}\cap Q_r \subset \left\{|\bar y|^2 < C_1\sigma(r)|x|^2\right\} \cup \left\{t< -C_1^{-1}|\bar y|^2/\sigma(r)\right\}$$
    for some constant $C_1$.
\end{prop}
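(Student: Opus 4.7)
The approach is to combine Theorem \ref{thm:expansion} with the nondegeneracy of $p_2$ on the orthogonal complement of its kernel: the error $|u-p_2|$ scales like $(|x|^2+|t|)\sigma$, whereas $p_2$ is quadratic and strictly positive in the $\bar y$-directions, so the contact set must lie in a thin parabolic cusp around $\{p_2=0\}$ whose opening shrinks like $\sqrt{\sigma(r)}$.

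First I would fix coordinates. By Proposition \ref{pro:classification}, up to a rotation one has $p_2(x)=\tfrac12\sum_{i=m+1}^n \mu_i x_i^2$ with $\mu_i>0$ and $\sum\mu_i=1$. With $x=(\bar x,\bar y)\in\R^m\times\R^{n-m}$ this yields the nondegenerate lower bound $p_2(x)\ge c_0|\bar y|^2$ for $c_0:=\tfrac12\min_i\mu_i>0$, a constant that depends only on the blow-up at $(0,0)$.

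Second I would carry out the key one-line estimate. Fix $r\in(0,\tfrac12]$ and take $(x,t)\in D_{\eta,m}\cap Q_r$ with $\eta\le 1$ to be chosen. The two defining inequalities of $D_{\eta,m}$ give $|\bar x|\le\eta^{-1}|\bar y|$ and $|t|\le\eta^{-2}|\bar y|^2$, hence
\[ |x|^2+|t|\;\le\;(1+\eta^{-2})|\bar y|^2+\eta^{-2}|\bar y|^2\;\le\;3\eta^{-2}|\bar y|^2. \]
Since $\sqrt{|x|^2+|t|}\le\sqrt{2}\,r$ and $\sigma$ is monotone with the mild doubling $\sigma(\sqrt2\,r)\le C_2\sigma(r)$ (checked case by case from \eqref{eq:modulus:u_t} and the definition of $\sigma$), Theorem \ref{thm:expansion} gives
\[ u(x,t)\;\ge\; p_2(x)-C_3\bigl(|x|^2+|t|\bigr)\sigma\bigl(\sqrt{|x|^2+|t|}\bigr)\;\ge\;\bigl(c_0-3C_2C_3\,\eta^{-2}\sigma(r)\bigr)|\bar y|^2. \]
Choosing $\eta:=C\sqrt{\sigma(r)}$ with $C$ large enough so that $3C_2C_3/C^2\le c_0/2$, the right-hand side is $\ge \tfrac{c_0}{2}|\bar y|^2$. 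Since $(0,0)\notin D_{\eta,m}$ and the defining inequalities are strict, $|\bar y|>0$ on $D_{\eta,m}$, hence $u>0$ there, proving $\{u>0\}\cap Q_r\supset D_{C\sqrt{\sigma(r)},m}$.

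For the final inclusion I would simply take complements: if $(x,t)\in\{u=0\}\cap Q_r$ then $(x,t)\notin D_{C\sqrt{\sigma(r)},m}$, so either $|\bar y|\le C\sqrt{\sigma(r)}|\bar x|$ or $|\bar y|\le C\sqrt{\sigma(r)}|t|^{1/2}$. Squaring the first and using $|\bar x|\le|x|$ gives $|\bar y|^2\le C^2\sigma(r)|x|^2$; squaring the second and using $t<0$ in $Q_r$ gives $t\le -C^{-2}|\bar y|^2/\sigma(r)$. This yields the claim with $C_1=C^2$. I do not foresee a real obstacle: the only non-routine check is the doubling property of $\sigma$, which is immediate in each of the four cases of Theorem \ref{thm:expansion}.
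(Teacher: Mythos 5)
Your proof is correct and follows essentially the same approach as the paper's: both invoke Theorem~\ref{thm:expansion} to bound $u$ below by $p_2$ minus the error term, use the coordinate form of $p_2$ from Proposition~\ref{pro:classification} to obtain $p_2\ge c|\bar y|^2$, and then observe that the defining inequalities of $D_{\eta,m}$ force the error term to be dominated once $\eta$ is of order $\sqrt{\sigma(r)}$. The only extra care you take (checking the doubling $\sigma(\sqrt2 r)\lesssim\sigma(r)$, and verifying $|\bar y|>0$ on $D_{\eta,m}$) is silently absorbed into constants in the paper's proof, so the two arguments are substantively identical.
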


\begin{proof}
    From Theorem \ref{thm:expansion}, we have that
    $$u(x,t) \geq p_2(x) - C(|x|^2+|t|)\sigma(r) \quad \text{in} \ Q_r.$$
    Now, by Proposition \ref{pro:classification}, we can write
    $$p_2(x) = \frac{1}{2}\sum\limits_{i = m+1}^n\mu_ix_i^2 \geq c|\bar y|^2.$$
    It follows that
    $$u(x,t) \geq c|\bar y|^2 - C\sigma(r)(|x|^2+|t|) \quad \text{in} \ Q_r,$$
    and then
    \begin{align*}
        \{u > 0\}\cap Q_r &\supset \left\{c|\bar y|^2 > C\sigma(r)(|x|^2+|t|)\right\}\\
        &\supset \left\{|\bar y| > \sqrt{\frac{2C\sigma(r)}{c}}|x|\right\}\cap\left\{|\bar y| > \sqrt{\frac{2C\sigma(r)}{c}|t|}\right\}.
    \end{align*}
    The result follows by the definition of $D_{\eta,m}$.
\end{proof}

\subsection{Regularity}
Comparing the set where $u$ is positive with self-similar domains, we obtain a very precise lower bound for $u_t$ in terms of the homogeneous solutions defined in Proposition \ref{prop:cone_solns}.
\begin{lem}\label{lem:growth_dyadic}
    Let $u$ be a solution to \eqref{eq:obstacleProblem} such that $(0,0)\in\Sigma_m$. Then, there exist $r_0, c_0, c > 0$ such that
    \[u_t \geq c_0c^k\cdot 2^{-\sum\limits_{j=0}^{k-1}2^{j}\varepsilon_j}\varphi_k \quad \text{in} \ Q_{r_k},\]
    where $r_k := 2^{1-2^k}r_0$, $\varphi_k$ and $\varepsilon_k$ are as defined in Proposition \ref{prop:cone_solns} with $\eta_k = 2C\sqrt{\sigma(2r_k)}$, $C$ comes from Proposition \ref{prop:contact_set_ub}, and $\sigma$ from Theorem \ref{thm:expansion}.
\end{lem}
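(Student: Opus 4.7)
The plan is to prove the estimate by induction on $k$, using the self-similar solutions $\varphi_k$ of Proposition \ref{prop:cone_solns} as barriers for $u_t$ and the almost positivity of Proposition \ref{cor:almost_positivity} to propagate a boundary lower bound into the entire domain of positivity. First I would fix $r_0$ small enough so that $\eta_k:=2C\sqrt{\sigma(2r_k)}$ lies below the threshold of Proposition \ref{prop:cone_solns} for every $k\geq 0$; by Proposition \ref{prop:contact_set_ub} this guarantees $\{u>0\}\supset D_{\eta_k/2,m}\cap Q_{2r_k}$, so that $u_t$ is a nonnegative caloric function in $D_{\eta_k,m}\cap Q_{r_k}$ and each $\varphi_k$ is well-defined.

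For the base case $k=0$, one observes that $u_t\not\equiv 0$ in $D_{\eta_0,m}\cap Q_{r_0}$ (otherwise $u$ would be stationary in time there, contradicting the non-triviality of the blow-up $p_2$ and the fact that the free boundary reaches $(0,0)$). The parabolic Harnack inequality (Theorem \ref{thm:interior_Harnack}) then gives $u_t\geq \tilde c_0>0$ on $\{|\bar y|\geq r_0/2\}\cap Q_{r_0}$, and combining with the normalization properties of $\varphi_0$ (upper bound $C_*$, lower bound $c_*$ on $\{|\bar y|=1/2\}$, vanishing on $\partial D_{\eta_0,m}$), one applies Proposition \ref{cor:almost_positivity} to a suitable affine combination of $u_t$ and $\varphi_0$ to obtain $u_t\geq c_0\varphi_0$ in $Q_{r_0}$.

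For the inductive step I would rescale by a factor proportional to $r_{k+1}$: setting $v(y,s):=u_t(r_{k+1}y,r_{k+1}^2 s)$, the inductive hypothesis combined with the $2\varepsilon_k$-homogeneity of $\varphi_k$ gives
\begin{equation*}
v(y,s)\geq A_k r_{k+1}^{2\varepsilon_k}\varphi_k(y,s)\geq A_k c_* r_{k+1}^{2\varepsilon_k}\quad \text{on }\{|\bar y|=1/2\}\cap Q_1,
\end{equation*}
where $A_k=c_0 c^k 2^{-\sum_{j=0}^{k-1}2^j\varepsilon_j}$, while $v\geq 0$ everywhere and $\varphi_{k+1}\leq C_*$, $\varphi_{k+1}\equiv 0$ on $\partial D_{\eta_{k+1},m}$. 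Consider
\begin{equation*}
W:=M^{-1}\bigl(v-A_{k+1}r_{k+1}^{2\varepsilon_{k+1}}\varphi_{k+1}\bigr),
\end{equation*}
with $A_{k+1}:=c_1 A_k r_{k+1}^{2(\varepsilon_k-\varepsilon_{k+1})}$, $c_1:=c_*\nu/(C_*(1+\nu))$, and $M:=A_{k+1}r_{k+1}^{2\varepsilon_{k+1}}C_*/\nu$. This choice arranges exactly the four hypotheses of Proposition \ref{cor:almost_positivity}: caloricity is automatic, $W\geq -\nu$ follows from $v\geq 0$ and the upper bound on $\varphi_{k+1}$, $W\geq 1$ on $\{|\bar y|=1/2\}$ from the inductive bound and the lower bound on $\varphi_k$, and $W\geq 0$ on $\partial D_{\eta_{k+1},m}$ from $\varphi_{k+1}\equiv 0$. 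Hence $W\geq 0$ in $D_{\eta_{k+1},m}\cap Q_{1/2}$, which after unscaling gives $u_t\geq A_{k+1}\varphi_{k+1}$ in $Q_{r_{k+1}/2}$; the spurious factor $1/2$ is absorbed by taking $2r_{k+1}$ as the rescaling factor, or equivalently by a one-off shift of the sequence $r_k$.

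The principal obstacle is the algebraic bookkeeping. One must verify that the product $\prod_{j=0}^{k-1}r_{j+1}^{2(\varepsilon_j-\varepsilon_{j+1})}$, with $r_j=2^{1-2^j}r_0$, collapses after Abel summation into the prescribed form $2^{-\sum_{j=0}^{k-1}2^j\varepsilon_j}$, with all boundary terms (notably one proportional to $2^{k+1}\varepsilon_k$) either bounded or absorbable into $c^k c_0$ thanks to the decay rate of $\varepsilon_k$ provided by Proposition \ref{prop:cone_solns}. The multiplicative loss $c_1\in(0,1)$ at each application of Proposition \ref{cor:almost_positivity} is what supplies the $c^k$ factor.
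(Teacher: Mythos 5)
Your proposal matches the paper's proof: induction over the doubly-exponential scales $r_k=2^{1-2^k}r_0$, using Proposition \ref{prop:contact_set_ub} to guarantee $u_t$ is caloric and nonnegative in $D_{\eta_k,m}\cap Q_{r_k}$, the self-similar barriers $\varphi_k$ from Proposition \ref{prop:cone_solns}, and Proposition \ref{cor:almost_positivity} applied (after rescaling by $2r_{k+1}$) to an affine combination of $u_t$ and $\varphi_{k+1}$, yielding the same recursion $c_{k+1}=c(2r_{k+1})^{2(\varepsilon_k-\varepsilon_{k+1})}c_k$ that telescopes to the stated expression. The only cosmetic difference is the base case, which the paper handles by a direct comparison principle (compactness of $\partial_p Q_{r_0}\cap D_{\eta_0,m}\subset\{u>0\}$ gives $u_t\ge\mu>0$ there, while $\varphi_0\equiv 0$ on $\partial D_{\eta_0,m}$) rather than through Harnack followed by another invocation of Proposition \ref{cor:almost_positivity} as you propose.
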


\begin{proof}
    Let $r_0 \in (0,\frac{1}{4})$ small enough so that $C\sqrt{\sigma(2r_0)} < \frac{1}{8}$. Let $r_k := 2^{1-2^k}r_0$, $\eta_k := 2C\sqrt{\sigma(2r_k)}$, and let $\varphi_k$ and $\varepsilon_k$ be as defined in Proposition \ref{prop:cone_solns}. We will prove that
    $$u_t \geq c_k\varphi_k \quad \ \text{in} \ Q_{r_k},$$
    where $c_{k+1} = c(2r_{k+1}^{2(\varepsilon_k-\varepsilon_{k+1})})c_k$.
    
    First, note that $\varphi_0$ is a solution to the heat equation in $D_{\eta_0,m}\cap Q_{r_0}$. Then we can write
    $$\p_p\left(D_{\eta_0,m}\cap Q_{r_0}\right) = \left(\p D_{\eta_0,m}\cap Q_{r_0}\right) \cup \left(\p_p Q_{r_0}\cap D_{\eta_0,m}\right) =: I\cup II.$$
    Then, using Proposition \ref{prop:contact_set_ub} we have that on $I$, $u_t > 0$ and $\varphi_0 \equiv 0$, while on $II$, $u_t$ and $\varphi_0$ are continuous and positive. Since $II$ is at a positive distance from the boundary of $\{u > 0\}$, by compactness $u_t \geq \mu > 0$ on $II$, and it follows that $u_t \geq c_0\varphi_0$ on $II$ for some positive $c_0$. Therefore, by the comparison principle, $u_t \geq c_0\varphi_0$ in $Q_{r_0}$.
    
    Now we proceed with an iteration scheme. Assume by induction hypothesis that $u_t \geq c_k\varphi_k$ in $Q_{r_k}$. By Proposition \ref{prop:contact_set_ub} again, $u_t > 0$ in $D_{\eta_{k+1},m}\cap Q_{2r_{k+1}}$.
    
    Let
    $$v(x,t) := (1+\nu)c_k^{-1}c_*^{-1}(2r_{k+1})^{-2\varepsilon_k}u_t\left(2r_{k+1}x,4r_{k+1}^2t\right) - \nu\frac{\varphi_{k+1}}{\|\varphi_{k+1}\|_{L^\infty(Q_1)}},$$
    where $\nu$ comes from Proposition \ref{cor:almost_positivity}, and $c_*$ is the one in Proposition \ref{prop:cone_solns}. Then, $v \geq -\nu$ in $D_{\eta_{k+1},m}\cap Q_1$, it is nonnegative on $\p D_{\eta_{k+1},m}$, and
    $$v \geq (1+\nu)c_k^{-1}c_*^{-1}(2r_{k+1})^{-2\varepsilon_k}c_k\varphi_k\left(2r_{k+1}x,4r_{k+1}^2t\right) - \nu \geq 1 \quad \text{on} \ \left\{|\bar y| = \frac{1}{2}\right\}\cap Q_1.$$
    Therefore, by Proposition \ref{cor:almost_positivity}, $v \geq 0$ in $Q_{1/2}$, and undoing the scaling
    $$u_t \geq \frac{\nu c_*}{(1+\nu)C_*}(2r_{k+1})^{2(\varepsilon_k-\varepsilon_{k+1})}c_k\varphi_{k+1} \quad \text{in} \ Q_{r_{k+1}},$$
    with $C_*$ from Proposition \ref{prop:cone_solns} too.
    
    Finally we compute  
    \begin{align*}
        c_k &= c_0\prod\limits_{j=0}^{k-1} c(2r_j)^{2(\varepsilon_j-\varepsilon_{j+1})} = c_0(4r_0)^{2(\varepsilon_0-\varepsilon_k)}c^k\prod\limits_{j=0}^{k-1}2^{-2^{j+1}(\varepsilon_j-\varepsilon_{j+1})}\\
        &= c_0(4r_0)^{2(\varepsilon_0-\varepsilon_k)}c^k2^{2^k\varepsilon_k - \varepsilon_0-\sum\limits_{j=0}^{k-1}2^j\varepsilon_j}\\
        &\geq c_02^{-\varepsilon_0}(4r_0)^{2\varepsilon_0}c^k2^{-\sum\limits_{j=0}^{k-1}2^j\varepsilon_j}.
    \end{align*}
\end{proof}

Now, using the information that we have on $\sigma(r)$, and the dependence of $\varepsilon_j$ on $r$, we can simplify the bound in Lemma \ref{lem:growth_dyadic}.
\begin{lem}\label{lem:growth_r}
    Let $u$ be a solution to \eqref{eq:obstacleProblem} such that $(0,0)\in\Sigma_m$. Then, there exist $r_1, c_1 > 0$ such that for all $r \in (0,r_1)$,
    $$u_t \geq c_1\tau(r) \quad \text{in} \ \left\{|\bar y|^2 \geq \frac{r^2}{16n}\right\}\cap Q_r,$$
    where $\tau : \R_+ \to \R_+$ is defined by
    \begin{equation}\label{eq:tau}
        \tau(r)=\left\{\begin{array}{lllll}
            \exp -|\log r|^{\frac{1}{2}+\alpha} & \text{if} & n = 2 & \text{and} & m = 0,\\[0.1cm]
            \exp -|\log r|^\alpha & \text{if} & n \geq 3 & \text{and} & m = 0,\\[0.1cm]
            \exp -C\frac{|\log r|}{\log|\log r|} & \text{if} & n \geq 3 & \text{and} & m = n-2,\\[0.2cm]
            \exp -|\log r|^{1-\theta} & \text{if} & n \geq 4 & \text{and} & m \in \{1,\ldots,n-3\},
        \end{array}\right.
    \end{equation}
    for any $\alpha > 0$ and some $C, \theta > 0$.
\end{lem}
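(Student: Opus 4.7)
The plan is to convert the doubly-exponential dyadic iteration of Lemma \ref{lem:growth_dyadic} into an estimate at an arbitrary scale $r$, and then combine with the $2\varepsilon_k$-homogeneity and uniform bounds for $\varphi_k$ from Proposition \ref{prop:cone_solns}. Concretely, given a small $r \in (0,r_1)$, I pick the unique integer $k = k(r)$ with $r_{k+1} \leq r \leq r_k$, where $r_k = 2^{1-2^k}r_0$. A direct computation gives $|\log r| \asymp 2^k$, i.e.\ $k \asymp \log_2|\log r|$. Lemma \ref{lem:growth_dyadic} applied at this $k$ yields
\[
u_t \geq c_0\, c^k\, 2^{-S_k}\, \varphi_k \qquad \text{in } Q_{r_k} \supset Q_r,
\]
where $S_k := \sum_{j=0}^{k-1} 2^j \varepsilon_j$.

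The next step is a uniform lower bound $\varphi_k \geq c\, r^{2\varepsilon_k}$ on $\{|\bar y|^2 \geq r^2/(16n)\} \cap Q_r$. Shrinking $r_1$ so that $\eta_k = 2C\sqrt{\sigma(2r_k)} < 1/(8\sqrt{n})$ for all relevant $k$, the target set lies inside $D_{\eta_k,m}$ at a fixed (scale-invariant) parabolic distance from its boundary once we rescale by $\lambda = (|x|^2+|t|)^{1/2} \asymp r$. Using the homogeneity $\varphi_k(\lambda x, \lambda^2 t) = \lambda^{2\varepsilon_k}\varphi_k(x,t)$, the bound reduces to a uniform estimate $\varphi_k \geq c > 0$ on a fixed compact piece of $D_{\eta_k,m}\cap Q_1$ away from the origin and from $\partial D_{\eta_k,m}$. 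This follows from the normalization $\varphi_k(e_n,-1)=1$, the $k$-independent bounds $\|\varphi_k\|_{L^\infty(Q_1)} \leq C_*$ and $\varphi_k \geq c_*$ on $\{|\bar y|=1/2\}\cap Q_1$, and interior Harnack chains. Combined with the preceding display, we obtain
\[
u_t \geq c_1\, c^k\, 2^{-S_k}\, r^{2\varepsilon_k}.
\]

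The final step is the case-by-case asymptotic evaluation of the right-hand side. Since $c^k$ is only polynomially small in $|\log r|$ and both $S_k$ and $2\varepsilon_k|\log r|$ are comparable to $\sum 2^j \varepsilon_j$, the decisive factor is $\exp(-C S_k)$, which I evaluate using Theorem \ref{thm:expansion} and Proposition \ref{prop:cone_solns}. When $n=2$ and $m=0=n-2$, Proposition \ref{prop:cone_solns}(a) combined with $\sigma(r) = 2^{-|\log r|^\gamma}$ gives $\varepsilon_k \lesssim |\log \eta_k|^{-1} \lesssim 2^{-k\gamma}$, hence $S_k \lesssim 2^{k(1-\gamma)}$ and $u_t \gtrsim \exp(-C|\log r|^{1-\gamma})$; choosing $\gamma = 1/2 - \alpha$ yields the desired $\tau$. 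When $n \geq 3$ and $m=0$, Proposition \ref{prop:cone_solns}(b) with $\sigma(r)=|\log r|^{-\varepsilon}$ gives $\varepsilon_k \lesssim \eta_k^{n-2} \lesssim 2^{-k\varepsilon(n-2)/2}$, and $\varepsilon$ close to $2/(n-2)$ produces $u_t \gtrsim \exp(-|\log r|^\alpha)$ for any $\alpha > 0$. When $m = n-2 \geq 1$, Proposition \ref{prop:cone_solns}(a) gives $\varepsilon_k \lesssim 1/k$, so $S_k \asymp 2^k/k$ and $u_t \gtrsim \exp(-C|\log r|/\log|\log r|)$. Finally, when $1 \leq m \leq n-3$, Proposition \ref{prop:cone_solns}(b) gives $\varepsilon_k \lesssim \eta_k^{n-m-2} \lesssim 2^{-k(n-m-2)\varepsilon_0/2}$, giving $u_t \gtrsim \exp(-C|\log r|^{1-\theta})$ with $\theta = (n-m-2)\varepsilon_0/2$, which we may assume to lie in $(0,1)$.

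The delicate point is the uniformity in the second step: the constant $c$ in $\varphi_k \geq c\, r^{2\varepsilon_k}$ must not degenerate as $k \to \infty$, even though both the homogeneity exponent $\varepsilon_k$ and the cone aperture $\eta_k$ vanish in the limit. This is exactly what the $k$-independent bounds $c_*, C_*$ in Proposition \ref{prop:cone_solns} are designed to provide, together with the fact that after the rescaling $\lambda \asymp r$ the target region stays at a macroscopic parabolic distance from $\partial D_{\eta_k, m}$, enabling a uniform Harnack propagation to $(e_n, -1)$.
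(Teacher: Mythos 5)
Your proof is correct and arrives at the same asymptotics via essentially the same ingredients (Lemma~\ref{lem:growth_dyadic}, Proposition~\ref{prop:cone_solns}, homogeneity, interior Harnack), but the order in which you apply the Harnack inequality differs from the paper, and the paper's ordering is cleaner. You apply Lemma~\ref{lem:growth_dyadic} first to get $u_t \geq c_0 c^k 2^{-S_k}\varphi_k$ throughout $Q_{r_k}$, and then have to prove a \emph{uniform-in-$k$} lower bound $\varphi_k \geq c\, r^{2\varepsilon_k}$ over the whole set $\{|\bar y|^2 \geq r^2/(16n)\}\cap Q_r$, which requires Harnack chains for $\varphi_k$ whose lengths and constants must be shown independent of $\eta_k$ (this works, since the chains can be confined to the fixed slab $\{|\bar y|\gtrsim 1/\sqrt{n}\}$ which stays far from $\partial D_{\eta_k,m}$ once $\eta_k$ is small, but it is a step you only sketch). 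The paper instead applies the interior Harnack \emph{to $u_t$} first: since $u_t$ is positive and caloric in $\{|\bar y|^2 \geq r^2/(32n)\}\cap Q_{2r}$ by Proposition~\ref{prop:contact_set_ub}, one gets
\[
\inf\Big\{u_t(x,t) : (x,t)\in Q_r,\ |\bar y|^2\geq \tfrac{r^2}{16n}\Big\} \geq c\,u_t(\sqrt{2}\,re_n,-2r^2),
\]
and then only needs to evaluate $\varphi_k$ at the single reference point $(\sqrt{2}\,re_n,-2r^2)$. By homogeneity this is exactly $(\sqrt{2}\,r)^{2\varepsilon_k}\varphi_k(e_n,-1) = (\sqrt{2}\,r)^{2\varepsilon_k}$, so no Harnack chain on $\varphi_k$ and no uniformity argument for the self-similar solutions are needed. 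Your case analysis and the $\varepsilon_k$ estimates match the paper's; a small bookkeeping point is that after absorbing the multiplicative constant $C$ in $\exp(-C|\log r|^{1-\gamma})$ you must take $\gamma$ slightly larger than $1/2-\alpha$ (not exactly equal) to land on $\exp(-|\log r|^{1/2+\alpha})$, exactly as the paper does with its auxiliary $\gamma'<\gamma$.
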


\begin{proof}
    First, let $r_1 \leq r_0$ from Lemma \ref{lem:growth_dyadic}, small enough so that $u > 0$ in\linebreak $Q_{2r_1}\cap D_{1/(16\sqrt{n}),m}$ (see Proposition \ref{prop:contact_set_ub}). Now, for all $r \in (0,r_1)$, $u > 0$ (and hence $u$ and $u_t$ are solutions to the heat equation) in $D_{1/(16\sqrt{n}),m}\cap Q_{2r}$, and in particular in $\{|\bar y|^2 \geq \frac{r^2}{32n}\}\cap Q_{2r}$.

    Then, by the interior Harnack,
    $$\inf\left\{u_t(x,t) : (x,t) \in Q_r \text{ and } |\bar y|^2\geq\frac{r^2}{16n}\right\} \geq cu_t(\sqrt{2}re_n,-2r^2),$$
    and hence it suffices to prove that
    $$u_t(\sqrt{2}re_n,-2r^2) \geq c\tau(r).$$

    Now, given $r \in [2^{-2^{k+1}}r_1,2^{-2^k}r_1]$, by Lemma \ref{lem:growth_dyadic},
    $$u_t(\sqrt{2}re_n,-2r^2) \geq c_0c^k 2^{-\sum\limits_{j=0}^{k-1}2^j\varepsilon_j}\varphi_k(\sqrt{2}re_n,-2r^2) = c_0c^k 2^{-\sum\limits_{j=0}^{k-1}2^j\varepsilon_j}(\sqrt{2}r)^{2\varepsilon_k},$$
    where $\varphi_k$ and $\varepsilon_k$ are as in Proposition \ref{prop:cone_solns} with $\eta_k = 2C\sqrt{\sigma(2^{2-2^k}r_1)}$, and $\sigma$ comes from Theorem \ref{thm:expansion}. Furthermore, since $|\log r| \gtrsim 2^k$, we can estimate $c^k \gtrsim |\log r|^{-a}$ for some fixed $a > 0$, and then
    $$u_t(\sqrt{2}re_n,-2r^2) \gtrsim c_0|\log r|^{-a}\cdot2^{-\sum\limits_{j=0}^{k-1}2^j\varepsilon_j}(\sqrt{2}r)^{2\varepsilon_k}.$$
    
    We distinguish four cases:
    \begin{itemize}
        \item When $n = 2$ and $m = 0$, $\sigma(t) = 2^{-|\log t|^\gamma}$ for any $\gamma \in (0,\frac{1}{2})$. Then,
        $$\eta_k = 2C\cdot 2^{-\frac{1}{2}(|2^k-2|\log 2 + |\log r_1|)^\gamma},$$
        and hence, for sufficiently large $k$,
        $$\varepsilon_k \leq \frac{C_*}{|\log \eta_k|} \lesssim 2^{-k\gamma} \lesssim |\log r|^{-\gamma},$$
        and then
        $$\sum\limits_{j=0}^{k-1}2^j\varepsilon_j \lesssim 2^{k(1-\gamma)} \lesssim |\log r|^{1-\gamma}.$$
        All in all, for small $r$,
        $$u_t(\sqrt{2}re_n,-2r^2) \gtrsim c_0|\log r|^{-a}2^{-C|\log r|^{1-\gamma}}(\sqrt{2}r)^{C|\log r|^{-\gamma}},$$
        that is,
        \begin{align*}
            \log u_t(\sqrt{2}re_n,-2r^2) &\geq -C + \log c_0 - a\log|\log r| - C\log 2\cdot|\log r|^{1-\gamma}\\
            &\quad - C|\log \sqrt{2}r|\cdot|\log r|^{-\gamma}\\
            &\geq -C|\log r|^{1-\gamma} \geq -|\log r|^{1-\gamma'},
        \end{align*}
        for all $\gamma' < \gamma$, provided that $r$ is small enough.
        \item When $n \geq 3$ and $m = 0$, $\sigma(t) = |\log t|^{-\delta}$ for any $\delta \in (0,\frac{2}{n-2})$\footnote{We change the notation to $\delta$ to avoid shadowing the variables $\varepsilon_k$, already in use.}. Then,
        $$\eta_k = 2C\cdot (|2^k-2|\log 2 + |\log r_1|)^{-\delta/2},$$
        and hence, for sufficiently large $k$,
        $$\varepsilon_k \leq C_*\eta_k^{n-2} \lesssim 2^{-k\delta(n-2)/2} =: 2^{-k(1-\alpha)} \lesssim |\log r|^{-(1-\alpha)},$$
        where $\alpha \in (0,1)$ is arbitrarily small, and then by the same reasoning as before
        $$\log u_t(\sqrt{2}re_n,-2r^2) \geq -|\log r|^{\alpha'}$$
        for all $\alpha' > \alpha$.
        \item When $n \geq 3$ and $m = n - 2$, $\sigma(t) = |\log t|^{-\delta_0}$, for some $\delta_0 > 0$\footnote{This $\delta_0$ corresponds to $\varepsilon_0$ in \eqref{eq:semiconvexity}.}. Then,
        $$\eta_k = 2C\cdot (|2^k-2|\log 2 + |\log r_1|)^{-\delta_0/2},$$
        and hence, for sufficiently large $k$,
        $$\varepsilon_k \leq \frac{C_*}{|\log \eta_k|} \lesssim \frac{1}{k} \lesssim \frac{1}{\log|\log r|},$$
        and then
        $$\sum\limits_{j=0}^{k-1}2^j\varepsilon_j \lesssim \frac{2^k}{k} \lesssim \frac{|\log r|}{\log|\log r|}.$$
        Combining the estimates, for small enough $r$,
        $$u_t(\sqrt{2}re_n,-2r^2) \gtrsim c_0|\log r|^{-a}2^{-C\frac{|\log r|}{\log|\log r|}}(\sqrt{2}r)^{\frac{C}{\log|\log r|}},$$
        and thus
        \begin{align*}
            \log u_t(\sqrt{2}re_n,-2r^2) &\geq \log c_0 - a\log|\log r| - C\frac{|\log r|}{\log|\log r|}\\
            &\quad -C\frac{|\log \sqrt{2}r|}{\log|\log r|}\\
            &\geq -C\frac{|\log r|}{\log|\log r|}.
        \end{align*}
        \item When $n \geq 4$ and $m \in \{1,\ldots,n-3\}$, $\sigma(t) = |\log t|^{-\delta_0}$ and $\eta_k \lesssim 2^{-k\delta_0/2}$. Now,
        $$\varepsilon_k \leq C_*\eta_k^{n-m-2} \lesssim 2^{-k\delta_0(n-m-2)/2} =: 2^{-k \theta} \leq |\log r|^{-\theta},$$
        and then
        $$\log u_t(\sqrt{2}re_n,-2r^2) \geq -|\log r|^{1-\theta'}$$
        for all $\theta' < \theta$.
    \end{itemize}
\end{proof}

We can translate the lower bound on $u_t$ to an upper bound on $\frac{\nabla u}{u_t}$ up to the free boundary.
\begin{prop}\label{prop:Clara-Teo}
    Let $u$ be a solution to \eqref{eq:obstacleProblem} such that $(0,0)\in\Sigma_m$. Then, there exist $r_1, C_1 > 0$ such that for all $r \in (0,r_1)$,
    $$\left\|\frac{\nabla u}{u_t}\right\|_{L^\infty(Q_r)} \leq C_1\frac{r}{\tau(2r)},$$
    where $\tau(r)$ is as in \eqref{eq:tau}.
\end{prop}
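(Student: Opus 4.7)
The plan is to establish separately an upper bound $|\nabla u| \le Cr$ in $Q_r$ and a lower bound $u_t \gtrsim \tau(2r)$ throughout $\{u>0\}\cap Q_r$, and then take the quotient. For the upper bound on $|\nabla u|$, I will use that $(0,0) \in \Sigma$ means $u(0,0)=0$, and since $u_t \ge 0$ this forces $u(0,t) \equiv 0$ for $t \in [-r^2,0]$. Hence the whole ``time axis'' $\{0\}\times[-r^2,0]$ lies in the contact set, so $\nabla u(0,t)=0$ there. Combining with the $C^{1,1}_x$ estimate of Theorem~\ref{T:caf77}(i), integration along a horizontal segment yields $|\nabla u(x,t)| \le C|x| \le Cr$ throughout $Q_r$.

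For the lower bound on $u_t$, Lemma~\ref{lem:growth_r} (applied at scale $2r$) already provides $u_t \ge c\,\tau(2r)$ in the outer region $\{|\bar y|^2 \ge (2r)^2/(16n)\} \cap Q_{2r}$. I will extend this to the rest of $\{u>0\}\cap Q_r$ by a maximum principle argument: consider the caloric functions $w_i^{\pm} := A\,u_t \pm \partial_{x_i}u$, which solve the heat equation in $\{u>0\}$ and vanish on $\partial\{u>0\}$ since both $u_t$ and $\partial_{x_i}u$ vanish there. With the choice $A = C_1 r/\tau(2r)$, the already-established bounds $|\partial_{x_i}u| \le Cr$ and $u_t \ge c\,\tau(2r)$ make $w_i^{\pm} \ge 0$ on the portion of the parabolic boundary sitting in the outer region. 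Applying the maximum principle in the subdomain $D_{C\sqrt{\sigma(2r)},m} \cap Q_{2r}$, which by Proposition~\ref{prop:contact_set_ub} is contained in $\{u>0\}$ and has a well-behaved boundary, propagates $w_i^{\pm} \ge 0$ inward. For points of $\{u>0\}\cap Q_r$ not already reached, a parabolic Harnack chain (Theorem~\ref{thm:interior_Harnack}) inside $D_{C\sqrt{\sigma(2r)},m}$ carries the bound on $u_t$ from the outer region to the interior; the chain has $O(|\log\sigma(2r)|)$ steps, so the constant degrades by a factor $\sigma(2r)^{c}$.

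The main obstacle is precisely this propagation step: one must show that the ``bad'' boundary portion $\{|\bar y| < r/(2\sqrt{n})\} \cap \partial_p Q_r \cap \{u>0\}$, where Lemma~\ref{lem:growth_r} does not directly apply, is either contained in $\{u=0\}$ (where $w_i^{\pm}$ automatically vanishes) or reachable by a Harnack chain inside the corridor supplied by Proposition~\ref{prop:contact_set_ub}, and then track the Harnack constants quantitatively. The key arithmetic check is that $\sigma(2r)^c$ dominates $\tau(2r)$ in each of the four regimes of \eqref{eq:tau}: in the cases $n=2,\,m=0$ and $n\ge 3,\,m=0$ the exponents $\alpha,\gamma$ in $\tau$ come with an open range, so the chain overhead is absorbed exactly as in the proof of Lemma~\ref{lem:growth_r}; the borderline case $m=n-2$, where both $\tau$ and $\sigma$ are only logarithmic in $r$, requires the most careful bookkeeping but still works since $|\log\sigma|$ enters only logarithmically in the Harnack count. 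Once this lower bound on $u_t$ is in place throughout $\{u>0\}\cap Q_r$, combining it with $|\nabla u| \le Cr$ gives $\|\nabla u/u_t\|_{L^\infty(Q_r)} \le C_1 r/\tau(2r)$, noting that on $\{u=0\}$ both $\nabla u$ and $u_t$ vanish.
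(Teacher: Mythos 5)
Your proposal has a genuine gap. The first plan—a uniform lower bound $u_t \gtrsim \tau(2r)$ throughout $\{u>0\}\cap Q_r$—cannot hold, since $u_t$ vanishes continuously on $\partial\{u>0\}$, so no positive lower bound survives near the free boundary. You then implicitly switch to the better idea of controlling $w_i^\pm := A\,u_t \pm u_i$ by the maximum principle, but the domain you propose does not work. If you apply the maximum principle in $D_{C\sqrt{\sigma(2r)},m}\cap Q_{2r}$, you must control $w_i^\pm$ on $\partial D_{C\sqrt{\sigma(2r)},m}$, and there $w_i^\pm$ does \emph{not} vanish (Proposition~\ref{prop:contact_set_ub} only tells you $\partial\{u>0\}$ lies strictly inside the complement of $D_{\eta,m}$; on $\partial D_{\eta,m}$ we still have $u>0$ with no sign information on $u_i$, and $u_t$ can be near zero). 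If instead you apply it in $\{u>0\}\cap Q_r(x_0,t_0)$, the ``bad'' boundary portion $\partial_p Q_r(x_0,t_0)\cap\{|\bar y|\lesssim r/\sqrt{n}\}\cap\{u>0\}$ is genuinely problematic: there $u_t$ has no lower bound and $|u_i|$ can be of size $\sim r$, so $w_i^\pm$ can be as negative as $-Cr$. Your Harnack-chain patch does not repair this, because it can only carry a lower bound on $u_t$ \emph{within} the corridor $D_{\eta,m}$; it does not reach the points of $\{u>0\}\setminus D_{\eta,m}$ near the free boundary, and those are exactly where the trouble occurs.

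The ingredient you are missing is a compensating caloric barrier. The paper instead considers
\[
v := C_1 u_t \pm \frac{\tau(2r)}{r}\,u_i + \frac{9n\,\tau(2r)}{r^2}\left(\frac{|x-x_0|^2+t_0-t}{2n+1}-u\right),
\]
which remains a solution to the heat equation in $\{u>0\}$ (the quadratic is chosen so that its caloric excess is $-1$, exactly matching $u_t-\Delta u=-1$), vanishes at $(x_0,t_0)$, and is nonnegative on \emph{all} of $\partial_p\big(\{u>0\}\cap Q_r(x_0,t_0)\big)$: on $\partial\{u>0\}$ the first two terms and $u$ vanish; on the ``bad'' boundary the third term is bounded below by $c\tau(2r)$ (because $u\lesssim r^2/n$ there while $|x-x_0|^2+t_0-t\geq r^2$ on $\partial_p Q_r(x_0,t_0)$), which absorbs the possible $-C\tau(2r)$ from the $u_i$ term; and on the ``good'' boundary Lemma~\ref{lem:growth_r} gives $u_t\geq c_1\tau(2r)$. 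The maximum principle then closes in one step, with no Harnack chain and no interior lower bound on $u_t$. Your preliminary estimate $|\nabla u|\leq Cr$ via the contact line $\{0\}\times(-r^2,0]$ and the $C^{1,1}_x$ bound is correct and essentially equivalent to the paper's use of $|\nabla u|\leq |\nabla p_2|+r$; it is only the propagation step that needs the barrier.
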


\begin{proof}
    First, note that the statement is equivalent to proving
    $$C_1u_t \pm \frac{\tau(2r)}{r}u_i \geq 0 \quad \text{in} \ \{u > 0\}\cap Q_r$$
    for all spatial derivatives $u_i$. Note also that by the convergence of $u$ to the blow-up $p_2$,
    $$u \leq p_2(x)+\frac{r^2}{24n} \leq \frac{|\bar y|^2}{2}+\frac{r^2}{24n} \quad \text{in} \ Q_{2r},$$
    and
    $$|\nabla u| \leq |\nabla p_2|+r \leq \sum\limits_{i=m+1}^n\mu_i|x_i|+r \leq |\bar y|+r \quad \text{in} \ Q_{2r}$$
    
    Now, let $(x_0,t_0) \in Q_r$, and define
    $$v := C_1u_t \pm \frac{\tau(2r)}{r}u_i + 9n\frac{\tau(2r)}{r^2}\left(\frac{|x-x_0|^2+t_0-t}{2n+1}-u\right),$$
    which is a solution to the heat equation in $\{u > 0\}$. Then, 
    $$v(x_0,t_0) \geq \inf\limits_{\p_p (\{u > 0\}\cap Q_r(x_0,t_0))} v,$$
    and it suffices to check that $v \geq 0$ on the parabolic boundary of $\{u > 0\}\cap Q_r(x_0,t_0)$. We distinguish three regions:
    \begin{itemize}
        \item On $\p\{u > 0\}$, $u = |\nabla u| = 0$, and then
        $$v = \frac{\tau(2r)}{r^2}\frac{9n(|x-x_0|^2+t_0-t)}{2n+1} \geq 0.$$
        
        \item On $\p_p Q_r(x_0,t_0)\cap\left\{|\bar y|^2 \leq \frac{r^2}{4n}\right\}$,
        \begin{align*}
            v &\geq \tau(2r)\left(-\sqrt{\frac{1}{4n}}-1 + 9n\left(\frac{1}{2n+1}-\frac{1}{8n}-\frac{1}{24n}\right)\right)\\
            &\geq \tau(2r)\left(-\sqrt{\frac{1}{4n}}-1+9n\frac{1}{6n}\right) \geq 0.
        \end{align*}
        
        \item On $\p_p Q_r(x_0,t_0)\cap\left\{|\bar y|^2 \geq \frac{r^2}{4n}\right\} \subset \left\{|\bar y|^2 \geq \frac{(2r)^2}{16n}\right\}\cap Q_{2r}$, $u_t \geq c_1\tau(2r)$ by Lemma \ref{lem:growth_r}, and then
        \begin{align*}
            v &\geq C_1c_1\tau(2r) + \tau(2r)\left(-2-1+9n\left(\frac{1}{2n+1}-2-\frac{1}{24n}\right)\right)\\
            &\geq C_1c_1\tau(2r) + \tau(2r)(-3+3-18n-1) \geq 0,
        \end{align*}
        choosing $C_1 = 19nc_1^{-1}$.
    \end{itemize}
\end{proof}

Finally, from the estimation we can deduce free boundary regularity.

\begin{cor}\label{cor:FB_almost_C2}
Let $u$ be a solution to \eqref{eq:obstacleProblem} such that $(0,0) \in \Sigma_m$. Then, there exist $r_1, c_1 > 0$ such that
$$\{u = 0\}\cap Q_{r_1} \supset \left\{|x| \leq c_1\sqrt{|t|}\,\tau(|t|) \right\},$$
where $\tau(r)$ is as in \eqref{eq:tau}.
\end{cor}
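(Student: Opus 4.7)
The plan is to use the slope bound from Proposition \ref{prop:Clara-Teo} along straight-line paths in spacetime joining a candidate point to the origin. The pointwise estimate $|\nabla u|\leq C_1(r/\tau(2r))\,u_t$ in $\{u>0\}\cap Q_r$ says that a spatial displacement of $u$ can be compensated by a small time increment, so a positive value of $u$ at a point $(x_0,t_0)$ in the claimed cuspidal region can only propagate forward in time along such a path, contradicting $u(0,0)=0$.

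Concretely, fix $(x_0,t_0)$ with $|x_0|\leq c_1\sqrt{|t_0|}\,\tau(|t_0|)$ and $|t_0|\leq r_1^2$ small (with $c_1,r_1>0$ to be chosen), and suppose for contradiction that $u(x_0,t_0)>0$. Consider the path
\[
\gamma(\lambda):=\bigl((1-\lambda)x_0,\,(1-\lambda)t_0\bigr),\qquad \lambda\in[0,1],
\]
so that $\gamma(0)=(x_0,t_0)$ and $\gamma(1)=(0,0)$. The inequality $c_1\tau(|t_0|)\leq 1$ gives $|x_0|\leq\sqrt{|t_0|}$, hence $\gamma\subset Q_r$ with $r:=\sqrt{|t_0|}$. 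Whenever $\gamma(\lambda)\in\{u>0\}$, Proposition \ref{prop:Clara-Teo} applied at scale $r=\sqrt{|t_0|}$ yields
\[
\frac{d}{d\lambda}u(\gamma(\lambda))=-x_0\cdot\nabla u+|t_0|\,u_t\geq u_t\Bigl(|t_0|-|x_0|\,C_1\,\frac{\sqrt{|t_0|}}{\tau(2\sqrt{|t_0|})}\Bigr),
\]
and the right-hand side is nonnegative provided
\[
c_1 C_1\leq \frac{\tau(2\sqrt{|t_0|})}{\tau(|t_0|)}.
\]

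The main observation that makes this inequality easy to arrange is that, in each of the four regimes of \eqref{eq:tau}, $\tau(2\sqrt{r})/\tau(r)\to\infty$ as $r\to 0^+$: writing $\tau(r)=\exp(-g(|\log r|))$ with $g$ increasing and sublinear, and using $|\log(2\sqrt r)|=\tfrac12|\log r|-\log 2$, this reduces to a short case-by-case computation. Hence, fixing $c_1$ and shrinking $r_1$, the monotonicity $\tfrac{d}{d\lambda}u(\gamma(\lambda))\geq 0$ holds along any portion of $\gamma$ lying in $\{u>0\}$. Since $u(\gamma(0))>0$, continuity yields a maximal interval $[0,\lambda_*)$ on which $\gamma\subset\{u>0\}$ and $u\circ\gamma\geq u(x_0,t_0)>0$; this strict lower bound is preserved at $\lambda_*$ by continuity, forcing $\lambda_*=1$. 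Therefore $u(0,0)\geq u(x_0,t_0)>0$, contradicting $(0,0)\in\partial\{u>0\}$. This shows $u(x_0,t_0)=0$ and gives the desired inclusion. The principal technical point is thus the scale-matching verification $\tau(2\sqrt r)/\tau(r)\to\infty$ across the four regimes; every other step is a direct consequence of Proposition \ref{prop:Clara-Teo} and of $u_t\geq 0$.
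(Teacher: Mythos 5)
Your proof is correct and takes essentially the same approach as the paper: both apply the slope bound of Proposition \ref{prop:Clara-Teo} to the radial derivative $t u_t + x\cdot\nabla u$ along the straight-line path from $(x,t)$ to the origin. The paper integrates this derivative via the fundamental theorem of calculus and argues by contrapositive (isolating a point along the ray where the radial derivative is positive), whereas you show the derivative is nonnegative on the whole ray and deduce a contradiction with $u(0,0)=0$; the scale-matching check $\tau(2\sqrt r)/\tau(r)\geq c_1C_1$ you highlight is implicit in the paper's closing remark about replacing $\tau(2\sqrt{|t|})$ by $\tau(|t|)$.
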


\begin{proof}
    Let $r \in (0,r_1)$ with $r_1$ from Proposition \ref{prop:Clara-Teo}. Let $(x,t) \in Q_r$ such that $u(x,t) > 0$. Then, we can write
    $$0 < u(x,t) = \int_0^1\left(\frac{\p}{\p\lambda}u(\lambda x,\lambda t)\right)\mathrm{d}\lambda = \int_0^1(tu_t+x\cdot\nabla u)(\lambda x,\lambda t)\mathrm{d}\lambda.$$
    Then,
    $$0 < \int_0^1(tu_t+|x||\nabla u|)(\lambda x,\lambda t)\mathrm{d}\lambda,$$
    and in particular $tu_t(\lambda_\circ x,\lambda_\circ t) + |x||\nabla u(\lambda_\circ x,\lambda_\circ t)| > 0$ at some point $(\lambda_\circ x,\lambda_\circ t)\in Q_r$. Then, by Proposition \ref{prop:Clara-Teo},
    $$t > -|x|\frac{|\nabla u(\lambda_\circ x,\lambda_\circ t)|}{u_t(\lambda_\circ x,\lambda_\circ t)} \geq -C_1\frac{r|x|}{\tau(2r)},$$
    and hence we have proved that
$$\{u = 0\}\cap Q_r \supset \left\{t \leq -C_1\frac{r}{\tau(2r)}|x|\right\}$$
for all $r\in (0,r_1)$. In particular, $u = 0$ in
$$\{|x| \leq C_1^{-1}\tau(2r)r, \, t = -r^2\},$$
that is,
$$\{u = 0\}\cap Q_{r_1} \supset \left\{|x| \leq C_1^{-1}\sqrt{|t|}\tau(2\sqrt{|t|})\right\}.$$
To conclude, note that we can replace $\tau(2\sqrt{|t|})$ by $\tau(|t|)$ by adjusting the constants in the definition of $\tau$.
\end{proof}

\subsection{Proof of the main results}

Our main results: Theorems \ref{main-thm1}, \ref{main-thm2}, and \ref{main-thm3}, and Proposition \ref{prop-thm1}, follow from the regularity and nondegeneracy estimates in this Section. We provide their proofs for completeness.

\begin{proof}[Proof of Theorem \ref{main-thm1}]
	It follows from combining Proposition \ref{prop:contact_set_ub} and Corollary \ref{cor:FB_almost_C2}.
\end{proof}

The proof of Proposition \ref{prop-thm1} is based on the same strategy, but making adjustments to leverage the improved estimate on $u_t$.
\begin{proof}[Proof of Proposition \ref{prop-thm1}]
	Recall that $n = 2$ and $(0,0) \in \Sigma_0$. Now, using that\linebreak $u_t \leq  C e^{-C|\log r|^{\frac12}}$, following the proof of Theorem \ref{thm:expansion} gives
	$$|u(x,t) - p_2(x)| \leq C(|x|^2+|t|)\tilde\sigma(\sqrt{|x|^2+|t|}) \quad \text{in} \ Q_{1/2},$$
	where $\tilde\sigma(r) = Ce^{-C|\log r|^{1/2}}$. Then, an analogous computation to the proof of Proposition \ref{prop:contact_set_ub} gives that
	\[\partial\{u(\cdot,-t)>0\} \subset \left\{ |x|<C_1\sqrt{t}\,e^{-c_1|\log t|^{\frac12}} \right\}.\]
	
	Moreover, we can replace $\sigma$ by $\tilde\sigma$ in Lemma \ref{lem:growth_dyadic}, and hence replace $\tau(r)$ by\linebreak $\tilde\tau(r) = \exp -C|\log r|^{1/2}$, and then Corollary \ref{cor:FB_almost_C2} gives	
	\[\partial\{u(\cdot,-t)>0\} \subset \left\{ c_1\sqrt{t}\,e^{-C_1|\log t|^{\frac12}}<|x|\right\}.\]
\end{proof}

The proof of Theorem \ref{main-thm2} is again straightforward.
\begin{proof}[Proof of Theorem \ref{main-thm2}]
	It follows from combining Proposition \ref{prop:contact_set_ub} and Corollary \ref{cor:FB_almost_C2}.
\end{proof}

Finally, Theorem \ref{main-thm3} follows from combining the classical results for regular points in \cite{Caf77,KN77,KN78} with our results on extinction rates for singular points.
\begin{proof}[Proof of Theorem \ref{main-thm3}]
	 Let $(x_0,t_0) \in U$. If $(x_0,t_0)$ is a regular point, then $\p\{u > 0\}$ is locally a $C^\infty$ graph around $(x_0,t_0)$.
	 
	 Otherwise, $(x_0,t_0) \in \Sigma_m$ for some $m \in \{0,\ldots,n-2\}$. Now, by Proposition \ref{prop:Clara-Teo}, there exists $r_0 > 0$ such that
	 $$\left\|\frac{\nabla u}{u_t}\right\|_{L^\infty(Q_r(x,t))} \leq C_1\frac{r}{\tau(2r)} \leq C\sqrt{r},$$
	 for all $r \in (0,r_0)$. 
	 
	 This implies that, for any $\lambda > 0$ and $r \in (0,r_0)$,
	 $$\{u(x,t) = \lambda\}\cap Q_r = \{t = \Gamma_\lambda(x), \ x \in B_r(x_0)\},$$
	 where $\Gamma_\lambda : B_r(x_0) \to \R$ is a Lipschitz function with Lipschitz constant $C\sqrt{r}$. 
	 
	 Taking the limit $\lambda \downarrow 0$, we conclude that the free boundary is a $C\sqrt{r}$-Lipschitz graph in $Q_r(x_0,t_0)$. Moreover, if we let $r \downarrow 0$, we deduce that $\p\{u > 0\}$ has a \textit{horizontal} tangent plane at $(x_0,t_0)$.
	 
	 Finally, once we know the tangent plane is well defined at every free boundary point in $U$, note that the normal vector $\nu_{(x,t)}$ is continuous in the regular set,\linebreak $\nu_{(x,t)} \equiv e_{n+1}$ on the singular set, and, for every singular point $(x_0,t_0)$,
	 $$|\nu_{(x,t)} - e_{n+1}| \leq C\sqrt{r} \quad \text{in} \ Q_r(x_0,t_0),$$
	 which together gives that $\p\{u > 0\}$ is locally a $C^1$ graph.
\end{proof}

\end{document}